\newtheorem{theorem}[subsection]{Theorem}
\newtheorem{prop}[subsection]{Proposition}
\newtheorem{lemma}[subsection]{Lemma}
\newtheorem{corollary}[subsection]{Corollary}
\theoremstyle{definition}
\newtheorem{definition}[subsection]{Definition}
\newtheorem{example}[subsection]{Example}
\newtheorem{notation}[subsection]{Notation}
\newtheorem{rmk}[subsection]{Remark}
\newtheorem{warning}[subsection]{Warning}
\DeclareMathOperator{\im}{im}
\newcommand{\M}{\mathbb{M}}
\newcommand{\R}{\mathbb{R}}
\newcommand{\Z}{\mathbb{Z}}
\renewcommand{\H}{\tilde{H}}
\renewcommand{\dot}{\small{$\bullet$}}
\newcommand{\cone}[3]{
	\draw[thick, #3] (#1+1/2,5) -- (#1+1/2,#2+1/2) -- (5-#2+#1,5);
	\draw[thick, #3] (#1+1/2,-5) -- (#1 +1/2, #2 -1.5) -- (-3-#2+#1,-5)
}
\newcommand{\anti}[3]{
	\draw[thick, #3] (#1+1/2, -5) -- (#1+1/2, 5);
	\draw[thick, #3] (#1+#2+1/2, -5) -- (#1+#2+1/2, 5);
	{\ifthenelse{#2>1}{
	\foreach \y in {-5,...,2} {
	\draw[thick, #3] (#1+1/2, \y +1/2) -- (#1+#2+1/2, \y + #2+1/2);
	}
	}{
	\foreach \y in {-5,-4,-3,-2,-1,0,1,2,3} {\draw[thick, #3] (#1+1/2, \y +1/2) -- (#1+#2+1/2, \y + #2+1/2);}}
	};
}
\newcommand{\lab}[3]{
\draw[#3] (#1+1/2,5.5) node{\tiny{$#2$}}
}
\begin{document}

\title[Equivariant fundamental classes]{Equivariant fundamental classes in $RO(C_2)$-graded cohomology with $\underline{\Z/2}$-coefficients}

\begin{abstract} Let $C_2$ denote the cyclic group of order two. Given a manifold with a $C_2$-action, we can consider its equivariant Bredon $RO(C_2)$-graded cohomology. In this paper, we develop a theory of fundamental classes for equivariant submanifolds in $RO(C_2)$-graded cohomology with constant $\Z/2$ coefficients. We show the cohomology of any $C_2$-surface is generated by fundamental classes, and these classes can be used to easily compute the ring structure. To define fundamental classes we are led to study the cohomology of Thom spaces of equivariant vector bundles. In general the cohomology of the Thom space is not just a shift of the cohomology of the base space, but we show there are still elements that act as Thom classes, and cupping with these classes gives an isomorphism within a certain range. 
\end{abstract}

\author[C. Hazel]{Christy Hazel}

{\let\thefootnote\relax\footnote{{The author was partially supported by NSF grant DMS-1560783.}}}

\maketitle

\tableofcontents

\section{Introduction}\label{ch:intro}
There has been recent interest in better understanding $RO(C_2)$-graded Bredon cohomology. For example, explicit computations can be found in \cite{CHT}, \cite{Ha}, \cite{Ho}, \cite{K2}, \cite{LFdS}, and \cite{Sh}, and certain freeness and structure theorems can be found in \cite{K1} and \cite{M1}. In this paper, we consider $C_2$-manifolds and develop a theory of fundamental classes for equivariant submanifolds in constant $\Z/2$ coefficients. These classes can be defined for both free and nonfree submanifolds.  When two submanifolds intersect transversally, the cup product of their classes is given in terms of the fundamental class of their intersection. In order to define these classes, we consider equivariant Thom spaces for real $C_2$-vector bundles and prove properties of the $RO(C_2)$-graded cohomology of these spaces with constant $\Z/2$ coefficients.

To say more about the cohomology of Thom spaces, let $\M_2$ denote the cohomology of a point with constant $\Z/2$ coefficients, and note the cohomology of any $C_2$-space with $\underline{\Z/2}$-coefficients is an $\M_2$-module. We will show when the base is nonfree there is a unique class in a predicted grading that generates a free submodule and forgets to the singular Thom class. Cupping with this class gives an isomorphism from the cohomology of the base space to the cohomology of the Thom space within a certain range. When the base is free, we show there are infinitely many classes connected by a predicted module action that forget to the singular Thom class, and cupping with any one of these classes produces an isomorphism from the shifted cohomology of the base space to the cohomology of the Thom space. 

The Thom classes for normal bundles allow us to define the previously mentioned fundamental classes. For nonfree submanifolds, there will be a unique fundamental class whose grading is determined by the normal bundle. For free submanifolds, there will be infinitely many fundamental classes all connected by a predicted module action. We will show the cohomology of any $C_2$-surface is generated by fundamental classes of submanifolds, and that we can use these classes to easily compute the ring structure. 
\medskip

Let us begin by recalling some facts about Bredon cohomology; a more detailed exposition can be found in \cite[Section 2]{HHR}, for example. For a finite group $G$ the Bredon cohomology of a $G$-space is a sequence of abelian groups graded on $RO(G)$, the Grothendieck group of finite-dimensional, real, orthogonal $G$-representations. When $G$ is the cyclic group of order two, recall any $C_2$-representation is isomorphic to a direct sum of trivial representations and sign representations. Thus $RO(C_2)$ is a free abelian group of rank two, and the Bredon cohomology of any $C_2$-space can be regarded as a bigraded abelian group. 

We will use the motivic notation $H^{*,*}(X;M)$ for the Bredon cohomology of a $C_2$-space $X$ with coefficients in a Mackey functor $M$. The first grading indicates the dimension of the representation, and we will often refer to this grading as the ``topological dimension". The second grading indicates the number of sign representations appearing and will be referred to as the ``weight". Given any $C_2$-space $X$, there is always an equivariant map $X\to pt$ where $pt$ denotes a single point with the trivial $C_2$-action. This gives a map of bigraded abelian groups $H^{*,*}(pt;M)\to H^{*,*}(X;M)$. If $M$ has the additional structure of a Green functor, then in fact this is a map of bigraded rings, and thus $H^{*,*}(X;M)$ forms a bigraded algebra over the cohomology of a point. In this paper, we will be working with the constant Mackey functor $\underline{\Z/2}$, which is a Green functor, and we will write $\M_2$ for the bigraded ring $H^{*,*}(pt;\underline{\Z/2})$. In addition to being a Green functor, $\underline{\Z/2}$ satisfies an additional property ($\text{tr}(1)=2$) that ensures $H^{*,*}(X;\underline{\Z/2})$ is a bigraded commutative ring.

\subsection{The cohomology of orbits} 
Since the cohomology of any $C_2$-space with $\underline{\Z/2}$-coefficients is a bigraded module over a bigraded ring, we can use a grid to record information about the cohomology groups and module structures. The ring $\M_2=H^{*,*}(pt;\underline{\Z/2})$ is illustrated on the left-hand grid in Figure \ref{fig:m2intro} below. Each dot represents a copy of $\Z/2$, and the connecting lines indicate properties of the ring structure. For example, the portion of the picture in quadrant one is polynomial in two elements $\rho$ and $\tau$ that have bidegrees $(1,1)$ and $(0,1)$, respectively. The diagonal lines in the picture indicate multiplication by $\rho$, while the vertical lines indicate multiplication by $\tau$. A full description of $\M_2$ can be found in Section \ref{ch:backcoh}. In practice, it is often easier to work with the abbreviated picture shown on the right-hand grid. 

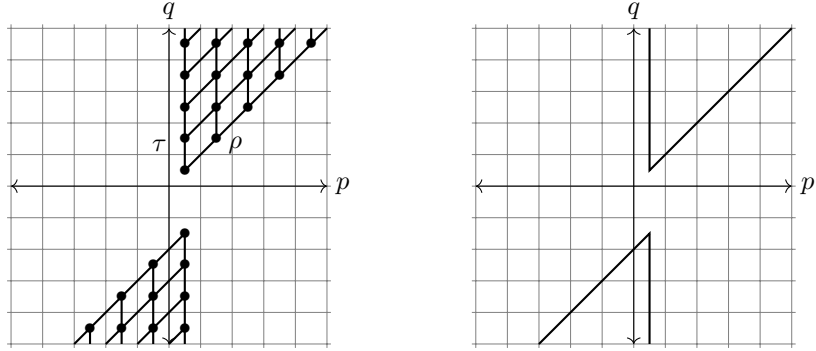
\begin{figure}[ht]
	\begin{tikzpicture}[scale=.42]
		\draw[help lines,gray] (-5.125,-5.125) grid (5.125, 5.125);
		\draw[<->] (-5,0)--(5,0)node[right]{$p$};
		\draw[<->] (0,-5)--(0,5)node[above]{$q$};
		\cone{0}{0}{black};
		\foreach \y in {0,1,2,3,4}
			\draw (0.5,\y+.5) node{\small{$\bullet$}};
		\foreach \y in {1,2,3,4}
			\draw (1.5,\y+.5) node{\small{$\bullet$}};
		\foreach \y in {2,3,4}
			\draw (2.5,\y+.5) node{\small{$\bullet$}};
		\foreach \y in {3,4}
			\draw (3.5,\y+.5) node{\small{$\bullet$}};
		\foreach \y in {4}
			\draw (4.5,\y+.5) node{\small{$\bullet$}};
		\foreach \y in {-1,-2,-3,-4}
			\draw (.5,\y-.5) node{\small{$\bullet$}};
		\foreach \y in {-2,-3,-4}
			\draw (-.5,\y-.5) node{\small{$\bullet$}};
		\foreach \y in {-3,-4}
			\draw (-1.5,\y-.5) node{\small{$\bullet$}};
		\foreach \y in {-4}
			\draw (-2.5,\y-.5) node{\small{$\bullet$}};
		\draw[thick](1.5,5)--(1.5,1.5);
		\draw[thick](2.5,2.5)--(2.5,5);
		\draw[thick](3.5,3.5)--(3.5,5);
		\draw[thick](4.5,4.5)--(4.5,5);
		\draw[thick](.5,1.5)--(4,5);
		\draw[thick](.5,2.5)--(3,5);
		\draw[thick](.5,3.5)--(2,5);
		\draw[thick](.5,4.5)--(1,5);
		\draw[thick](-.5,-2.5)--(-.5,-5);
		\draw[thick](-1.5,-3.5)--(-1.5,-5);
		\draw[thick](-2.5,-4.5)--(-2.5,-5);
		\draw[thick](.5,-2.5)--(-2,-5);
		\draw[thick](.5,-3.5)--(-1,-5);
		\draw[thick](.5,-4.5)--(0,-5);
		\draw (2.1,1.3) node{$\rho$};
		\draw (-.3,1.3) node{$\tau$};
	\end{tikzpicture}
	\hspace{0.5in}
	\begin{tikzpicture}[scale=.42]
		\draw[help lines,gray] (-5.125,-5.125) grid (5.125, 5.125);
		\draw[<->] (-5,0)--(5,0)node[right]{$p$};
		\draw[<->] (0,-5)--(0,5)node[above]{$q$};
		\cone{0}{0}{black};
	\end{tikzpicture}
	\caption{The ring $\M_2=H^{*,*}(pt;\underline{\Z/2}$) with $H^{p,q}(pt)$ in spot $(p,q)$.}
	\label{fig:m2intro}
\end{figure}

For the examples that follow, we also need the cohomology of the free orbit $C_2$. We denote this $\M_2$-module by $A_0=H^{*,*}(C_2;\underline{\Z/2})$. Algebraically, $A_0\cong \tau^{-1}\M_2/(\rho)\cong \Z/2[\tau,\tau^{-1}]$, and in the figure below, we provide a detailed drawing of this module on the left and an abbreviated drawing on the right. The dots again indicate copies of $\Z/2$ while the vertical lines indicate action by $\tau\in H^{0,1}(pt;\underline{\Z/2})$.

\begin{figure}[ht]
	\begin{tikzpicture}[scale=.42]
		\draw[help lines,gray] (-2.125,-5.125) grid (2.125, 5.125);
		\draw[<->] (-2,0)--(2,0)node[right]{$p$};
		\draw[<->] (0,-5)--(0,5)node[above]{$q$};
		\anti{0}{0}{black};
		\foreach \y in {-5,...,4}
			\draw (0.5,\y+.5) node{\small{$\bullet$}};
	\end{tikzpicture}\hspace{0.5 in}
	\begin{tikzpicture}[scale=.42]
		\draw[help lines,gray] (-2.125,-5.125) grid (2.125, 5.125);
		\draw[<->] (-2,0)--(2,0)node[right]{$p$};
		\draw[<->] (0,-5)--(0,5)node[above]{$q$};
		\anti{0}{0}{black};
	\end{tikzpicture}
	\caption{The $\M_2$-module $A_0=H^{*,*}(C_2;\underline{\Z/2})$.}
\end{figure}
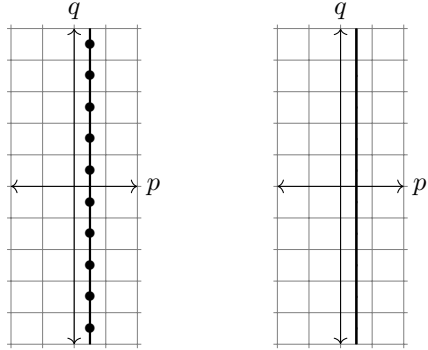

\subsection{Summary of main points}
Now that we have established some notation, we provide a brief summary of the main goals of this paper. There are two main topics, fundamental classes and the cohomology of Thom spaces. These topics can be further divided into five subtopics:  

\begin{enumerate}[wide, labelwidth=!, labelindent=0pt, itemsep=0.7em]
	\item \emph{Nonfree fundamental classes:} Given a nonfree $C_2$-manifold $X$ and a nonfree $C_2$-submanifold $Y$, we prove there exists a fundamental class $[Y]\in H^{n-k,q}(X;\underline{\Z/2})$ where $k=\dim(Y)$ and $q$ is found as follows. Consider the restriction of the equivariant normal bundle of $Y$ in $X$ to the fixed set $Y^{C_2}$. Over each component of this fixed set, the fibers are $C_2$-representations, and the isomorphism type of the representation is constant over each component. Thus each component corresponds to a representation of some weight, and the integer $q$ is chosen to be the maximum such weight.
	\item \emph{Free fundamental classes:} Given a $C_2$-manifold $X$ (free or nonfree) and a free submanifold $Y$, we show there is an infinite family of classes $[Y]_q\in H^{n-k,q}(X;\underline{\Z/2})$ where $k=\dim(Y)$ and $q$ is any integer. These classes satisfy the module relation $\tau\cdot[Y]_{q}=[Y]_{q+1}$. 
	\item \emph{Intersection product of fundamental classes:} We show the cup product of two fundamental classes for nonfree submanifolds $Y$ and $Z$ that intersect transversally is given by a predicted $\tau$-multiple of the fundamental class of the intersection. For free submanifolds, we show $[Y]_r\smile [Z]_s=[Y\cap Z]_{r+s}$.
	\item \emph{Restricted Thom isomorphism theorem for nonfree spaces:} Let $X$ be a finite, nonfree $C_2$-CW complex and $E\to X$ be an $n$-dimensional $C_2$-vector bundle whose maximum weight representation over $X^{C_2}$ is $q$. We show there is a unique class $u_E\in H^{n,q}(E,E-0;\underline{\Z/2})$ that restricts to $\tau$-multiples of the generators of the cohomology of the fibers. We also show cupping with this class gives an isomorphism $H^{f,g}(X;\underline{\Z/2})\to H^{f+n,g+q}(E,E-0;\underline{\Z/2})$ whenever $g\geq f$.
	\item\emph{Thom isomorphism theorem for free spaces:} Let $X$ be a finite, free $C_2$-CW complex. For an $n$-dimensional $C_2$-vector bundle $E\to X$, we show for each integer $q$ there is a unique class $u_{E,q}\in H^{n,q}(E,E-0;\underline{\Z/2})$ that restricts to the nonzero class in the cohomology of the fibers. In this case, cupping with any one of these Thom classes provides an isomorphism from the cohomology of the base space to the cohomology of a shift of the Thom space. 
\end{enumerate}

\begin{rmk}
These equivariant fundamental classes act in many ways just like their nonequivariant analogs, though, there are still various subtleties that arise in the equivariant context. For example, one might expect nonfree fundamental classes to always generate free summands and free fundamental classes to string together to form modules similar to $A_0$. While this is often the case, there are exceptions. As shown in Example \ref{introex3} below, there can be free submanifolds whose classes are nonzero for a while, but become zero in high enough weight. This and other subtleties are best explained through a series of examples. 
\end{rmk}

\subsection{Examples of equivariant fundamental classes} 
Before stating the general results, let's consider some examples of $C_2$-surfaces to determine what properties we might expect of equivariant fundamental classes.
\begin{example}
 We begin with a simple example. Let $X$ denote the $C_2$-space whose underlying space is the genus one torus and whose $C_2$-action is given by a reflection, as shown in Figure \ref{fig:intro1ex}. The fixed set is shown in {\color{blue}{\textbf{blue}}}. As shown in the author's previous work \cite[Theorem 6.6]{Ha}, the cohomology of this space is given by
	\[
	H^{*,*}(X;\underline{\Z/2})\cong \M_2 \oplus \Sigma^{1,0}\M_2 \oplus \Sigma^{1,1}\M_2 \oplus \Sigma^{2,1}\M_2.
	\]
An abbreviated illustration of this module is given below.

\begin{figure}[ht]
	\begin{subfigure}[b]{0.45\textwidth}
	\centering
	\begin{tikzpicture}[scale=1.1]
		\draw[<->] (-.2,1.7)--(.2,1.7);
		\draw (0,0) ellipse (2cm and 1cm);
		\draw (-0.4,0) to[out=330,in=210] (.4,0) ;
		\draw (-.35,-.02) to[out=35,in=145] (.35,-.02);
		\draw[very thick,blue] (0,.55) ellipse (.1cm and 0.45cm);
		\draw[white,fill=white] (-.3,0.15) rectangle (0,.96);
		\draw[very thick,blue,dashed] (0,.55) ellipse (.1cm and 0.45cm);
		\draw[very thick,blue] (0,-.56) ellipse (.1cm and 0.43cm);
		\draw[white,fill=white] (-.3,-0.95) rectangle (0,-.14);
		\draw[very thick,blue,dashed] (0,-.56) ellipse (.1cm and 0.43cm);
		\draw[thick,red] (0,0) ellipse (1cm and 0.5cm);
		\draw[green] (0.1,.5)node{\dot};
		\draw[green] (0.3,.6)node{\small{$a$}};
		\draw[green] (0.1,-.5)node{\dot};
		\draw[green] (0.3,-.6)node{\small{$b$}};
		\draw[blue] (0,1.3) node{\small{$C$}};
		\draw[blue] (0,-1.3) node{\small{$C'$}};
		\draw[red] (-1.25,0) node{\small{$D$}};
	\end{tikzpicture}
	\vspace{0.2in}
	\end{subfigure}
	\begin{subfigure}[b]{0.45\textwidth}
	\centering
	\begin{tikzpicture}[scale=0.41]
		\draw[help lines,gray] (-5.125,-5.125) grid (5.125, 5.125);
		\draw[<->] (-5,0)--(5,0)node[right]{$p$};
		\draw[<->] (0,-5)--(0,5)node[above]{$q$};
		\cone{0}{0}{black};
		\cone{1.2}{1}{black};
		\draw[thick] (1.3,5)--(1.3,.5)--(5,4.2);
		\draw[thick] (1.3,-5)--(1.3, -1.5)--(-2.2,-5);
		\draw[thick] (2.5,5)--(2.5,1.5)--(5,4);
		\draw[thick] (2.5,-5)--(2.5,-.5)--(-2,-5);
	\end{tikzpicture}
	\end{subfigure}
	\caption{The space $X$ and the $\M_2$-module $H^{*,*}(X;\underline{\Z/2})$.}
	\label{fig:intro1ex}
\end{figure}
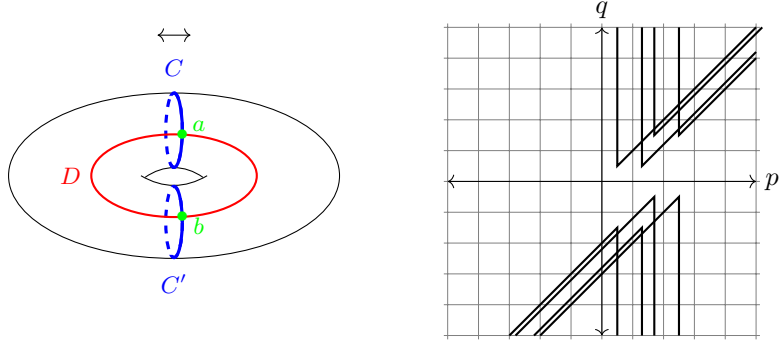

Suppose we wanted to find equivariant submanifolds whose fundamental classes generate these free summands. Take, for example, the fixed circle labeled $C$ above. Topologically, this is a codimension one submanifold, so we would expect to have a fundamental class in bidegree $(1,q)$ for some $q$, but how do we determine this value of $q$? Let's consider a tubular neighborhood of $C$. Note the tubular neighborhood is equivariantly homeomorphic to $C\times \R^{1,1}$ where $\R^{1,1}$ denotes the sign representation, so one might expect a fundamental class in bidegree $(1,1)$. Indeed, we will show there is such a class $[C]\in H^{1,1}(X;\underline{\Z/2})$, and furthermore, this class generates a free summand in bidegree $(1,1)$. We also have a class $[C']\in H^{1,1}(X;\underline{\Z/2})$ corresponding to the other fixed circle, and we will show $[C]=[C']+\rho\cdot 1$. 

Now consider the circle $D$ that travels around the hole of the torus and is isomorphic to a circle with a reflection action. In this case, the tubular neighborhood of $D$ is homeomorphic to $D\times \R^{1,0}$ where $\R^{1,0}$ is the trivial representation. We expect to get a class in bidegree $(1,0)$, and indeed such a class $[D]$ exists and generates a free summand.

These two circles intersect at a single fixed point $a$ whose tubular neighborhood is the unit disk in $\R^{2,1}$. The fundamental class $[a]$ generates a free summand in bidegree $(2,1)$, and furthermore we obtain the relation
	\[
	[C]\smile [D]=[a].
	\]
The submanifolds $C$ and $C'$ do not intersect, so we also obtain the relation
	\[
	[C]\smile [C]=[C]\smile ([C']+\rho\cdot 1)=\rho\cdot [C].
	\]
We conclude
	\[
	H^{*,*}(X;\underline{\Z/2}) \cong \M_2[x,y]/(x^2=\rho x, y^2=0),~~~|x|=(1,1),~|y|=(1,0)
	\]
as an $\M_2$-algebra.
\end{example}

\begin{rmk}
In the example above, all of the considered submanifolds had trivial normal bundles, and this led to an obvious choice of bidegree for each fundamental class. Of course, we would like to have fundamental classes for submanifolds whose normal bundles are nontrivial. The next example illustrates this.
\end{rmk}

\begin{example}
Let $Y$ denote the $C_2$-space given by the projective plane with action induced by the rotation action on the disk as shown in Figure \ref{fig:rp2twintro}. Again the fixed set is shown in {\color{blue}{\textbf{blue}}}. The cohomology of $Y$ as an $\M_2$-module is given by
	\[
	H^{*,*}(Y;\underline{\Z/2})\cong \M_2 \oplus \Sigma^{1,1}\M_2 \oplus \Sigma^{2,1}\M_2.
	\]

\begin{figure}[ht]
	\begin{subfigure}[b]{0.45\textwidth}
	\centering
	\begin{tikzpicture}[scale=1.7]
		\draw[blue,very thick, fill=lightgray, decoration={markings, mark=at position 0 with {\arrow[black,scale=.5]{*}}, mark=at position 0.25 with {\arrow[black]{>}}, mark=at position 0.5 with {\arrow[black,scale=.5]{*}}, mark=at position 0.75 with {\arrow[black]{>}}}, postaction={decorate}] (0,0) ellipse (1 cm and .7 cm);
		\draw[red, very thick] (-.2,.68)--(.2,-.68);
		\draw[blue] (-.2,.68) node{$\bullet$};
		\draw[blue] (-.25,.82) node{$p$};
		\draw[red] (-.3,.5) node{$C'$};
		\draw[blue] (.2,-.68) node{$\bullet$};
		\draw[blue] (.25,-.84) node{$p$};
		\draw[blue] (0,0)node {$\bullet$};
		\draw[->] (.14,1) arc (-40:220:.20cm);
		\draw[blue] (.15,0) node{$q$};
		\draw[blue] (1.1,.15) node{$C$};
	\end{tikzpicture}
	\vspace{.2in}
	\end{subfigure}
	\begin{subfigure}[b]{0.45\textwidth}
	\centering
	\begin{tikzpicture}[scale=.41]
		\draw[help lines,gray] (-3.125,-5.125) grid (6.125, 5.125);
		\draw[<->] (-3,0)--(6,0)node[right]{$p$};
		\draw[<->] (0,-5)--(0,5)node[above]{$q$};
		\cone{-0.1}{0}{black};
		\cone{1.1}{1}{black};
		\cone{2.1}{1}{black};
	\end{tikzpicture}
	\end{subfigure}
	\caption{The space $Y$ and the $\M_2$-module $H^{*,*}(Y;\underline{\Z/2})$.}
	\label{fig:rp2twintro}
\end{figure}
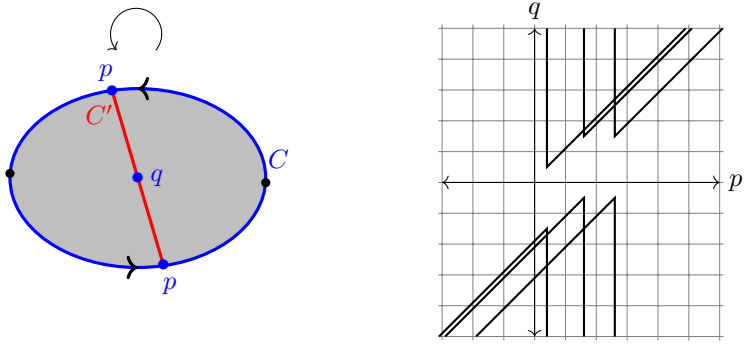

We again look for equivariant submanifolds. Consider the fixed circle labeled $C$. The equivariant normal bundle $N$ is given by the nontrivial M\"obius bundle over $S^1$. This bundle is not trivial, but observe every point $x\in C$ has an equivariant neighborhood $U_x$ such that $N|_{U_x}\cong U_x\times \R^{1,1}$. In other words, this bundle is a locally trivial $\R^{1,1}$-bundle, so we get a class $[C]\in H^{1,1}(X;\underline{\Z/2})$.

Now consider the circle $C'$. This circle is isomorphic to a circle with a reflection action, and again the normal bundle $N'$ is the nontrivial bundle over $S^1$. Around the fixed point $p$ there is a neighborhood $U$ such that $N'|_U\cong U\times \R^{1,0}$ while around the point $q$ there is a neighborhood $V$ such that $N'|_V \cong V \times \R^{1,1}$. The bundle $N'$ is not a locally trivial $W$-bundle for any $C_2$-representation $W$, so if it even exists, the bidegree of the fundamental class for $C'$ is unclear.

We will show the fundamental class $[C']$ does exist, and its grading is determined by the fibers of the normal bundle over the fixed set. In this case, $N'|_p\cong \R^{1,0}$ while $N'|_q\cong \R^{1,1}$. The class $[C']$ will have grading $(1,k)$ where $k$ is the \emph{maximum} weight representation appearing over the fixed set, so in this case $k=1$. 

The classes $[C]$ and $[C']$ are both in bidegree $(1,1)$, and we will see in Example \ref{rp2twex} that $[C]=[C']+\rho\cdot 1$. Let's consider their product. One might hope
	\[
	[C]\smile [C'] = [C\cap C'] = [p],
	\]
but something has gone wrong. The product has bidegree $(2,2)$, while the fundamental class $[p]$ has bidegree $(2,1)$. We will show a slightly modified formula holds. Namely
	\[
	[C]\smile [C'] = \tau\cdot[p],
	\]
where recall $\tau\in H^{0,1}(pt;\underline{\Z/2})$. This gives us the relation 
	\[
	[C]^2=[C]([C']+\rho\cdot1)=\tau\cdot[p]+\rho\cdot[C].
	\]
In Example \ref{rp2twex} we use these classes to conclude as an $\M_2$-algebra
	\[
	H^{*,*}(Y;\underline{\Z/2})\cong \M_2[x,y]/(x^2= \tau y+\rho x, xy=0, y^2=0),~~~|x|=(1,1), |y|=(2,1).
	\]
\end{example}

\begin{rmk} The first two examples give a flavor of fundamental classes for nonfree submanifolds. It is natural to ask if such things can be defined for free submanifolds. We see this in the next example.
\end{rmk}

\begin{example}\label{introex3}
Let $Z$ denote the $C_2$-space whose underlying space is the genus two torus and whose $C_2$-action is given by a rotation action with two fixed points, as illustrated below.
The cohomology of this space is given by
	\[
	H^{*,*}(Z;\underline{\Z/2}) \cong \M_2\oplus \left(\Sigma^{1,0}A_0\right)^{\oplus 2} \oplus \Sigma^{2,2}\M_2.
	\]
\begin{figure}[ht]
	\begin{subfigure}[b]{0.45\textwidth}
	\centering
	\includegraphics[scale=0.75]{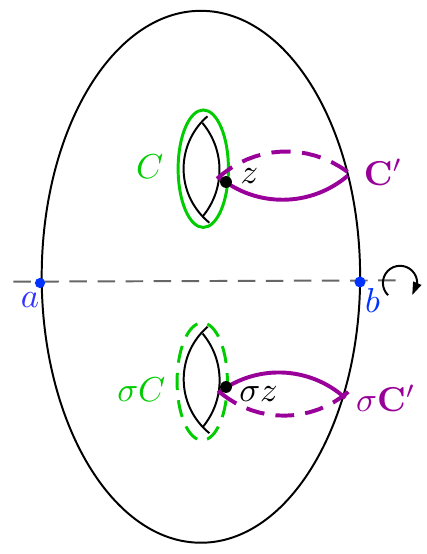}
	\end{subfigure}
	\begin{subfigure}[b]{0.45\textwidth}
	\centering
	\begin{tikzpicture}[scale=0.42]
		\draw[help lines,gray] (-4.125,-5.125) grid (5.125, 5.125);
		\draw[thick,<->] (-4,0)--(5,0)node[right]{$p$};
		\draw[thick,<->] (0,-5)--(0,5)node[above]{$q$};
		\draw[thick] (0.5,5)--(0.5,0.3)--(5,4.8);
		\draw[thick] (0.5,-5)--(0.5,-1.7)--(-2.8,-5);
		\draw[thick] (2.5,-5)--(2.5,.5)--(-3,-5);
		\draw[thick] (2.5,5)--(2.5,2.5)--(5,5);
		\draw[thick] (1.5,-5)--(1.5,5);
		\lab{1}{2}{black};
	\end{tikzpicture}
	\end{subfigure}
	\caption{The space $Z$ and the $\M_2$-module $H^{*,*}(Z;\underline{\Z/2})$.}
\end{figure}

In this example the cohomology is an infinitely generated, nonfree $\M_2$-module. The free submanifold $C\sqcup \sigma C$ is a codimension one submanifold, so we would expect to have a class $[C\sqcup \sigma C]\in H^{1,?}(Z;\underline{\Z/2})$. The submanifold is free, so there is no fixed set to determine the weight as in the previous examples. We will show there are actually infinitely many fundamental classes, one in each weight. That is, we have classes $[C\sqcup \sigma C]_q\in H^{1,q}(Z;\underline{\Z/2})$ for all integers $q$ that are related via the formula
	\[
	\tau\cdot [C\sqcup \sigma C]_q=[C\sqcup \sigma C]_{q+1}.
	\]
The submodule generated by these classes corresponds to a $\Sigma^{1,0}A_0$-summand. One can show the fundamental classes of the free submanifold $C'\sqcup \sigma C'$ generate the other summand. Lastly, there are classes $[a],[b]\in H^{2,2}(Z;\underline{\Z/2})$ corresponding to the two fixed points. Either class will generate a free summand in bidegree $(2,2)$, and the classes are related via $[a]=[b]+\rho^2\cdot 1$.

We can also choose a point $z\in Z\setminus Z^{C_2}$ and consider the classes $[z\sqcup \sigma z]_q\in H^{2,q}(Z;\underline{\Z/2})$. One might expect these classes to all be zero, but in fact, they are nonzero whenever $q\leq 0$. The intersection product then leads to some interesting relations as shown in Example \ref{tspit}.

We mention two other nonfree classes one might consider. There are two copies of a circle with a reflection action in the space $Z$. We can consider the circle that travels around the equator through $a$ and $b$, and the circle that travels around the perimeter of the picture through $a$ and $b$. Call these circles $D$ and $E$, respectively. Then $[D]=\rho\cdot 1$ while $[E]=[C\sqcup \sigma C]_{1}+\rho\cdot 1$.
\end{example}

\subsection{Nonequivariant fundamental classes}
Before trying to define equivariant fundamental classes, let's recall nonequivariant fundamental classes. Let $X$ be a closed manifold and $Y\subset X$ be a closed, connected submanifold of codimension $k$. We can define the fundamental class $[Y]\in H^{k}_{sing}(X;\Z/2)$ using the classical Thom isomorphism theorem from \cite{T}.

Consider the normal bundle $N$ of $Y$ in $X$. Let $U\subset X$ be a tubular neighborhood of $Y$. By excision
	\[
	H^{k}_{sing}(X,X-Y;\Z/2)\cong H^{k}_{sing}(U,U-Y;\Z/2) \cong H^{k}(N,N-0;\Z/2).
	\]
By the Thom isomorphism theorem, the righthand group is $\Z/2$ and generated by the Thom class $u_N$. Thus there exists a unique nonzero class in $H^{k}(X,X-Y;\Z/2)$. We now define the fundamental class $[Y]\in H^{k}(X;\Z/2)$ to be the image of this unique class under the induced map from the inclusion of the pairs $(X,\emptyset)\hookrightarrow (X,X-Y)$. Recall these classes have a nice intersection product. If $Y$ and $Z$ are two submanifolds of $X$ that intersect transversally, then $[Y]\smile [Z]=[Y\cap Z]$.

One goal of this paper is to define an equivariant analog to these classes in Bredon cohomology. Given an equivariant submanifold, the above hints that we should consider the normal bundle, and use some fact about the cohomology of the corresponding Thom space. Unfortunately, no direct analog of the Thom isomorphism theorem exists for general $C_2$-vector bundles in $\underline{\Z/2}$-coefficients; see Example \ref{mobiusovers11} for an example of a vector bundle $E$ such that the cohomology of $(E,E-0)$ is not just a shift of the cohomology base space. Despite this failure, we can still prove a weaker version of the Thom isomorphism theorem, and this is enough to define fundamental classes.

\subsection{The main theorems}
We now state the main results of this paper. In what follows, let $A_n$ denote the $\M_2$-module $A_n=H^{*,*}(S^n_a;\underline{\Z/2})\cong \tau^{-1}\M_2/(\rho^{n+1})$. It was shown in \cite[Theorem 5.1]{M2} that as a module over the cohomology of a point, the cohomology of any finite $C_2$-CW complex is isomorphic to a direct sum of free modules and shifted copies of $A_n$ for various values of $n$.

We begin with nonfree $C_2$-vector bundles. Let $X$ be a finite, nonfree $C_2$-CW complex and let $\pi:E\to X$ be a real $n$-dimensional $C_2$-vector bundle (precise definitions can be found in Section \ref{ch:eqvb}). Let $X_1,\dots,X_r$ denote the connected components of the fixed set $X^{C_2}$. As explained in Section \ref{ch:eqvb}, there exist weights $q_1$,\dots, $q_r$ such that for all $x\in X_i$, the fiber $E_x\cong \R^{n,q_i}$. Let $q$ be the maximum such weight. We will prove the following where all coefficients are understood to be $\underline{\Z/2}$. 

\begin{theorem}\label{intro1}
Let $X$, $X_i$, $\pi:E\to X$, $n$, $q_i$ and $q$ be defined as above and let $E'=E-0$. There exists a unique class $u_E\in H^{n,q}(E,E')$ that restricts to the nonzero class over both free and nonfree fibers. Furthermore this class satisfies the following properties:
\begin{enumerate}
	\item[(i)] $\psi(u_E)$ is the singular Thom class, where $\psi:H^{n,k}(E,E')\to H^n_{sing}(E,E')$ is the forgetful map;
	\item[(ii)] $\M_2\cdot u_E \cong \Sigma^{n,q}\M_2$, where $\M_2\cdot u_E$ denotes the submodule generated by $u_E$;
	\item[(iii)] If $Y$ is nonfree, then for $f:Y\to X$, we have the naturality formula $f^*u_{E}= \tau^ju_{f^*E}$ for a determined value of $j$;
	\item[(iv)] Under the inclusion of fixed points $j:X^{C_2}\to X$, we have that $j^*(u_E) = \sum_{i=1}^m \tau^{q-q_i}\rho^{q_i}u_{E_i^{C_2}}$ where $E_i = E|_{X_{i}}$;
	\item[(v)] The map $\phi_E=\pi^*(-) \smile u_E:H^{f,g}(X)\to H^{f+n,g+q}(E,E')$ is an isomorphism if $g\geq f$;
	\item[(vi)] Suppose $H^{*,*}(X) \cong (\oplus_{i=1}^c \Sigma^{k_i,\ell_i}\M_2) \oplus (\oplus_{j=1}^d \Sigma^{s_j,0}A_{r_j}).$ Then $H^{*,*}(E,E') \cong (\oplus_{i=1}^c \Sigma^{k_i+n,\ell'_i}\M_2) \oplus (\oplus_{j=1}^d \Sigma^{s_j+n,0}A_{r_j})$
	 where the weights $\ell_i'$ satisfy $\ell_i+q\geq \ell_i'\geq 0$;
	\item[(vii)] If in fact $E_x\cong E_y$ for all $x,y\in X^{C_2}$, then $\phi_E$ is an isomorphism in all bidegrees and $H^{*,*}(X)\cong H^{*+n,*+q}(E,E')$
\end{enumerate} 
\end{theorem}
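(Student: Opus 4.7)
The plan is to establish the Thom-isomorphism statement (v) first, by induction on the equivariant CW structure of $X$, and then to deduce the existence and uniqueness of $u_E$ along with (i)--(iv), (vi), (vii) as corollaries. The key observation is that the range $g\geq f$ is precisely the region of $\M_2$-modules where $\tau$ acts injectively, and this is what allows the relevant five-lemma argument to close.

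\textbf{Cellular induction for (v).} Give $X$ an equivariant CW structure whose cells are either free ($C_2\times D^p$) or fixed ($D^{p,s}$). On a single fixed cell whose fixed part sits in the component $X_i$, the bundle $E$ is trivializable with fiber $\R^{n,q_i}$, so $T(E)\simeq D^{p,s}_+\wedge S^{n,q_i}\simeq S^{p+n,s+q_i}$; the natural Thom class lives in bidegree $(n,q_i)$, and the designated class in bidegree $(n,q)$ is its $\tau^{q-q_i}$-multiple. Multiplication by $\tau^{q-q_i}$ is an isomorphism exactly in the range where $\tau$ acts injectively on $\M_2$, namely $g\geq f$, so $\phi_E$ is an iso there. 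Free cells are handled similarly via $T(E)\simeq (C_2)_+\wedge S^n$ with cohomology $\Sigma^{n,0}A_0$. For the inductive step, let $A\subset X$ be a subcomplex for which (v) holds and write $X=A\cup_\phi\text{cell}$. The cofiber sequences of pairs on $X$ and on $E$ yield a ladder of long exact sequences connected by three instances of $\phi_E$; in the range $g\geq f$, the inductive hypothesis and the base case give isos on four of the five relevant terms, and the five lemma gives an iso on the middle term.

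\textbf{Existence, uniqueness, and remaining properties.} In bidegree $(f,g)=(0,0)$, which satisfies $g\geq f$, (v) yields an isomorphism $H^{0,0}(X)\xrightarrow{\cong}H^{n,q}(E,E')$; define $u_E:=\phi_E(1)$. Uniqueness of $u_E$ as the class restricting to the nonzero class on every fiber follows from uniqueness of $1\in H^{0,0}(X)$ together with the fiberwise restriction map. Property (i) follows from naturality of the forgetful map $\psi$, which commutes with $\phi_E$ and with the Thom construction. Property (ii) is immediate from (v) applied to the free summand $\M_2\cdot 1\subset H^{*,*}(X)$. Property (iii) follows from naturality and uniqueness applied to $f^*E$: the two classes $f^*u_E$ and $u_{f^*E}$ are both Thom classes, possibly in different weights, so they differ by the predicted $\tau^j$. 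Property (iv) is a fiberwise computation on each $X_i$ using the splitting $\R^{n,q_i}\cong\R^{n-q_i,0}\oplus\R^{q_i,q_i}$: the Thom class on the sign-representation summand contributes the factor $\rho^{q_i}$, while $\tau^{q-q_i}$ compensates the weight gap. Property (vi) follows from (v) applied summand-by-summand to the structure theorem decomposition of $H^{*,*}(X)$ from \cite{M2}, with the weight shifts $\ell_i'$ determined by how each generator $(k_i,\ell_i)$ sits relative to the line $g=f$. Finally (vii), the uniform case $q_i=q$ for all $i$, holds because the local Thom classes already live in bidegree $(n,q)$ without any $\tau$-correction, so no range restriction appears in the cellular induction and one gets an iso in all bidegrees.

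\textbf{Main obstacle.} The hardest step is the five lemma in the cellular induction, particularly when attaching a fixed cell with $q_i<q$: the local Thom class is a $\tau^{q-q_i}$-multiple of the canonical class, introducing $\tau$-torsion whose contribution to the long exact sequence could in principle escape the good range. Verifying that the boundary maps respect the range $g\geq f$, and that this is exactly the stable region, is the main technical point. A secondary subtlety is the precise determination of the weight shifts $\ell_i'$ in (vi): the correct shift depends on where each free summand's generator sits relative to the line $g=f$, and requires a finer analysis than a blind application of (v).
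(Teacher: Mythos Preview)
Your approach via cellular induction is genuinely different from the paper's, which instead constructs $u_E$ in one shot using the isotropy-separation (fracture) square
\[
\begin{tikzcd}
(EC_2\times (X^{C_2})^E)_+ \arrow[r]\arrow[d] & (X^{C_2})^E\arrow[d]\\
(EC_2\times X^E)_+ \arrow[r] & X^E
\end{tikzcd}
\]
The paper first proves a full Thom isomorphism in bigraded Borel cohomology (Theorem~\ref{borthom}) and one for trivial-action bases (Lemma~\ref{trivthom}), obtains classes $\tau^q u_E^{Bor}$ and $u_E^{fix}=\sum_i\tau^{q-q_i}u_{E_i}$, checks they agree in the pushout corner, and lifts their sum to the unique $u_E$. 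The five-lemma is then applied once to the Mayer--Vietoris sequence of this square, with the outer vertical maps being cup product with the Borel and fixed-point Thom classes; the range restriction $g\geq f$ enters exactly because $\tau^{q-q_i}$ acts on $\H^{*,*}((X^{C_2})^E)$ and Lemma~\ref{tauiso} controls that action.

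Your write-up has a genuine logical gap. You propose to prove (v) first and then \emph{define} $u_E:=\phi_E(1)$, but $\phi_E$ is by definition $\pi^*(-)\smile u_E$; this is circular. In a cellular induction one must construct $u_E$ and prove the isomorphism \emph{simultaneously}: at the inductive step you need a class on $X$ whose restriction to $A$ is $u_{E|_A}$ and whose image in the cofiber agrees with the cell's Thom class, and you must verify these are compatible on the attaching sphere before the ladder even exists. You flag this as the ``main obstacle'' but do not resolve it. The paper's fracture-square approach sidesteps the gluing entirely, which is its principal advantage.

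Two further gaps. Your argument for (ii) does not work: (v) is only an isomorphism for $g\geq f$, so it says nothing about the bottom cone of $\M_2\cdot u_E$; the paper instead restricts $u_E$ to a fiber with $q_i=q$, observes $\theta\cdot u_E\neq 0$ there, and invokes \cite[Lemma~4.1]{M1} that $\theta$ detects free summands. And your sketch of (vi) (``summand-by-summand'') is far too thin: knowing $\phi_E$ is an isomorphism for $g\geq f$ does not immediately yield the summand decomposition of $H^{*,*}(E,E')$ nor the bounds on $\ell_i'$. The paper devotes Appendix~\ref{ch:algproof} to this, proving an abstract statement (Theorem~\ref{algproof}) about $\M_2$-module maps that are isomorphisms above the diagonal, via passage to $\tau^{-1}T(-)$ for the antipodal summands and to $(\tilde{M}/\im\rho)$ for the free summands, the latter reducing to a lemma about graded $\Z/2[t]$-modules.
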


Let $Y$ be a nonfree $k$-codimensional equivariant submanifold of $X$ and let $N$ denote the equivariant normal bundle. We can construct an equivariant tubular neighborhood of $Y$, and then use the class $u_{N}$ to get a class $[Y]\in H^{k,q}(X;\underline{\Z/2})$. As with the nonequivariant classes, we have a nice formula for how these classes multiply. For now, we state a summary. Recall $\tau\in H^{0,1}(pt;\underline{\Z/2})$.

\begin{theorem}\label{introprod}
Let $X$ be a nonfree, $n$-dimensional $C_2$-manifold, and let $Y$ and $Z$ be two closed, nonfree equivariant submanifolds. Suppose $Y$ intersects $Z$ transversally in the nonequivariant sense and that $Y\cap Z$ is a nonfree submanifold. Then there is a unique integer $j\geq 0$ such that
	\[
	[Y]\smile[Z] = \tau^{j}[Y\cap Z].
	\]
\end{theorem}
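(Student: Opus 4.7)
The plan is to reduce the global theorem to a multiplicativity statement for Thom classes of equivariant normal bundles, and then prove that statement using Theorem \ref{intro1}. First I would localize. Writing $W = Y \cap Z$, choose compatible equivariant tubular neighborhoods $U_Y \supset Y$ and $U_Z \supset Z$ whose intersection $U_Y \cap U_Z$ is an equivariant tubular neighborhood $U_W$ of $W$; such compatible neighborhoods exist by standard equivariant tubular neighborhood theorems. By excision the cup product $[Y]\smile[Z]$ lifts to a class in $H^{*,*}(X,\,X-W)$ supported in $U_W$. The nonequivariant transversality hypothesis, together with $W$ being nonfree, gives an equivariant splitting
\[
N_W \;\cong\; N_Y|_W \;\oplus\; N_Z|_W
\]
of $C_2$-vector bundles over $W$ (at a fixed point the equivariant complement exists by complete reducibility of $C_2$-representations). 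Under the Thom space identifications, the localized product is precisely the cup product $u_{N_Y|_W}\smile u_{N_Z|_W}$ in $H^{*,*}(N_W,\,N_W-0)$.

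The theorem therefore reduces to the following Thom-class multiplicativity statement: for $C_2$-vector bundles $E,F$ over a nonfree $C_2$-CW complex $B$, with maximum fixed-point weights $q_E,q_F$, and with $q_{E\oplus F}=\max_i(q_{E,i}+q_{F,i})$ over the components $B_i\subset B^{C_2}$,
\[
u_E\smile u_F \;=\; \tau^{\,j}\, u_{E\oplus F}, \qquad j := q_E + q_F - q_{E\oplus F} \;\geq\; 0.
\]
Both sides live in bidegree $(n_E+n_F,\,q_E+q_F)$. Theorem \ref{intro1}(v) applied to $E\oplus F$ in this bidegree has source $H^{0,j}(B)$, and the condition $g\geq f$ reads $j\geq 0$, which holds. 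Hence there is a unique $\alpha\in H^{0,j}(B)$ with $u_E\smile u_F = \pi^*(\alpha)\smile u_{E\oplus F}$, and the problem becomes showing $\alpha = \tau^j\cdot 1_B$.

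To identify $\alpha$, I would pin it down by combining two restriction maps. Letting $\iota: B^{C_2}\hookrightarrow B$ be the inclusion of the fixed set and $\psi$ the forgetful map from Theorem \ref{intro1}(i), applying part (iv) of Theorem \ref{intro1} to $u_E$ and $u_F$ and multiplying gives
\[
\iota^*(u_E\smile u_F) \;=\; \sum_i \tau^{\,q_E+q_F-q_{E,i}-q_{F,i}}\,\rho^{\,q_{E,i}+q_{F,i}}\, u_{(E\oplus F)_i^{C_2}},
\]
using that restrictions to distinct fixed components annihilate each other and that the classical Thom class of a Whitney sum is the product of Thom classes. A direct check shows this equals $\tau^j\cdot\iota^*(u_{E\oplus F})$. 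Combining this with $\psi(u_E\smile u_F) = \psi(u_E)\smile\psi(u_F) = \psi(u_{E\oplus F})$, so that $\psi(\alpha)=1$, forces $\alpha = \tau^j$ in $H^{0,j}(B)$. Tracing back through the localization of the first paragraph completes the proof with $j = q_{N_Y} + q_{N_Z} - q_{N_W}$, which is manifestly nonnegative.

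The hard part will be the final identification $\alpha = \tau^j$. The fixed-point restriction pins down $\alpha|_{B^{C_2}}$ and the forgetful map controls $\psi(\alpha)$, but in full generality $H^{0,j}(B)$ can contain extra classes visible to neither — for example, contributions from $A_r$-summands in the module decomposition of Theorem \ref{intro1}(vi). Verifying that such potential ambiguities cannot alter the equation $\pi^*(\alpha)\smile u_{E\oplus F} = u_E\smile u_F$ — equivalently, that the Thom isomorphism together with the $(\iota^*,\psi)$ restrictions are jointly injective in this specific bidegree — is the main technical hurdle, and will likely require a careful weight-by-weight analysis against the module decomposition of $H^{*,*}(B)$.
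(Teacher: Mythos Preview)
Your approach matches the paper's: both reduce the global cup product to a statement about Thom classes over $W=Y\cap Z$ (the paper via an explicit Pontryagin--Thom collapse diagram, you via compatible tubular neighborhoods and excision), and both identify the resulting class with $\tau^j u_{N_W}$. The paper does the identification by restricting to individual fibers over points of $W$ and invoking the uniqueness clause of Theorem~\ref{thomnonfree} directly; you instead pass through the restricted Thom isomorphism and the pair $(\iota^*,\psi)$.

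Your flagged ``hard part'' dissolves immediately. For any finite $C_2$-CW complex $B$ and any $j\geq 0$, the forgetful map $\psi\colon H^{0,j}(B)\to H^0_{sing}(B)$ is already injective: by the forgetful long exact sequence its kernel is the image of $\rho$ from $H^{-1,j-1}(B)$, and the structure theorem (Theorem~\ref{structure}) forces $H^{-1,j-1}(B)=0$ for $j\geq 0$, since every $A_r$-summand lives in nonnegative topological dimension and the bottom cone of any $\Sigma^{p,q}\M_2$ with $p\geq q\geq 0$ reaches dimension $-1$ only in weights $\leq -3$. Thus $\psi(\alpha)=1$ alone forces $\alpha=\tau^j$; the fixed-point restriction and the weight-by-weight module analysis you anticipate are unnecessary. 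One minor imprecision in your localization step: by Proposition~\ref{natnonfree} the restrictions of $u_{N_Y}$ and $u_{N_Z}$ to $W$ are $\tau^{a}u_{N_Y|_W}$ and $\tau^{b}u_{N_Z|_W}$ with $a,b\geq 0$, since the maximum fiber weight over $W^{C_2}$ can be strictly smaller than over $Y^{C_2}$ or $Z^{C_2}$; these extra powers of $\tau$ are simply absorbed into the final exponent $j=q_Y+q_Z-q_W$.
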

The exact value of $j$ is dependent on $Y$, $Z$, and $Y\cap Z$ and is given explicitly in Theorem \ref{prod}. \medskip

For free $C_2$-vector bundles, we will prove something similar. Again the coefficients are understood to be $\underline{\Z/2}$.

\begin{theorem}\label{intro2}
Let $X$ be a free, finite $C_2$-CW complex and let $\pi: E\to X$ be a real $C_2$-vector bundle and $E'=E-0$. For every integer $q$ there exists a unique class $u_{E,q}\in H^{n,q}(E,E')$ that restricts to the nonzero class over fibers. Furthermore this class satisfies the following properties:
\begin{enumerate}
	\item[(i)] $\psi(u_{E,q})$ is the singular Thom class, where $\psi:H^{n,q}(E,E')\to H^n_{sing}(E,E')$ is the forgetful map;
	\item[(ii)] $\tau\cdot u_{E,q} = u_{E,q+1}$;
	\item[(iii)] If $Y$ is free, then for $f:Y\to X$, we have the naturality formula $f^*u_{E,q}=u_{f^*E,q}$;
	\item[(iv)] The map $\pi^{*}(-)\smile u_{E,q}: H^{*,*}(X) \to H^{*+n,*+q}(E,E')$ is an isomorphism for all $q$. In particular, $H^{*,*}(X)\cong H^{*+n,*}(E,E')$.
\end{enumerate}
\end{theorem}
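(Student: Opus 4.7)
The plan is to reduce the theorem to the classical singular Thom isomorphism by exploiting the fact that, for a free $C_2$-CW complex, $RO(C_2)$-graded Bredon cohomology with $\underline{\Z/2}$-coefficients is essentially $\tau$-periodic singular cohomology of the quotient. My first task is to establish this structural reduction: for any finite free $C_2$-CW complex $W$, the quotient map $W \to W/C_2$ induces a natural isomorphism $H^{p,q}(W; \underline{\Z/2}) \cong H^p_{sing}(W/C_2; \Z/2)$ for every $(p, q)$, under which $\tau$ acts as the identity and $\rho$ acts as zero. This goes by induction on the number of equivariant cells; the base case is a single free cell $C_2 \times D^k$, whose cohomology is $\Sigma^{k,0} A_0 \cong \Z/2[\tau^{\pm 1}]$ concentrated in topological degree $k$. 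The inductive step uses the long exact sequence associated to the cofiber sequence of each attaching map. Applying this identification to $X$ and to the pair $(E, E')$, both free because $C_2$ acts freely on $X$, reduces the bigraded problem to a classical singular one over the quotient bundle $E/C_2 \to X/C_2$.

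Given this reduction, define $u_{E,0} \in H^{n,0}(E, E'; \underline{\Z/2})$ to be the class corresponding to the classical singular Thom class $u_E^{sing} \in H^n_{sing}(E/C_2, E'/C_2; \Z/2)$, and set $u_{E,q} := \tau^q u_{E,0}$. Property (ii) is then immediate, and the fiberwise restriction condition is checked by restricting to any orbit $E|_{C_2 \cdot x} \simeq (C_2)_+ \wedge (\R^n, \R^n - 0)$; here $H^{n,q}$ of this pair is a single copy of $\Z/2$, so ``the nonzero class'' is unambiguous. Uniqueness at a fixed weight $q$ reduces, via $\tau$-invertibility, to uniqueness at weight zero, which in turn follows from the classical uniqueness of $u_E^{sing}$ combined with the structural identification above.

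Properties (i) and (iii) are then easy: the forgetful map satisfies $\psi(\tau) = 1$, so $\psi(u_{E,q}) = \psi(u_{E,0}) = u_E^{sing}$, and naturality is inherited from naturality of both the classical Thom class and the quotient identification. The Thom isomorphism (iv) follows because, under the identifications from the reduction, the equivariant cup-product map $\pi^*(-) \smile u_{E,q}$ intertwines with the classical cup-product Thom isomorphism $- \smile u_E^{sing}$ on the quotient bundle, and that is a classical theorem.

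The hard part will be establishing the structural reduction cleanly: proving the identification $H^{*,*}(W; \underline{\Z/2}) \cong H^*_{sing}(W/C_2; \Z/2)$ for every finite free $C_2$-CW complex, compatibly with both the module structure over $\M_2$ and the cup product structure. Although morally clear, this requires carefully unwinding the attaching maps and tracking how $\tau$ and $\rho$ are identified under the isomorphism. Once that is in place, each of the four listed properties for $u_{E,q}$ follows directly from its classical analog.
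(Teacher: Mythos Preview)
Your approach is correct and essentially the same as the paper's: both reduce to the singular Thom isomorphism for the quotient bundle $E/C_2 \to X/C_2$. The paper phrases this via bigraded Borel cohomology---first proving a Borel Thom isomorphism for $EC_2\times_{C_2}E \to EC_2\times_{C_2}X$, then using that for free $X$ the projection $EC_2\times X \to X$ is an equivariant weak equivalence so Borel and Bredon cohomology coincide. Your direct quotient argument and the paper's Borel argument are the same once one observes $EC_2\times_{C_2}X\simeq X/C_2$ for free $X$; the paper's framing has the advantage of fitting into a uniform setup that also handles the nonfree case.

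One correction: your claim that $\rho$ acts as zero under the identification $H^{p,q}(W)\cong H^p_{sing}(W/C_2)$ is wrong for general free $W$. Multiplication by $\rho\tau^{-1}$ corresponds to multiplication by the Borel class $u\in H^1_{sing}(W/C_2)$, the first Stiefel--Whitney class of the double cover $W\to W/C_2$; this is nonzero already for $W=S^1_a$. Fortunately you never use the vanishing of $\rho$, so this does not affect the argument---the only structural facts you need are the group isomorphism, $\tau$-invertibility, and compatibility with cup products, all of which hold.
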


Now given a free submanifold, we can use the classes corresponding to the normal bundle to define fundamental classes $[Y]_q\in H^{k,q}(X;\underline{\Z/2})$ for all $q$. There is also a nice intersection product for these free fundamental classes. 

\begin{theorem}
Let $X$ be an $n$-dimensional $C_2$-manifold, and suppose $Y$ and $Z$ are equivariant submanifolds that intersect transversally in the nonequivariant sense and whose intersection is free. We have the following cases for the product of their fundamental classes. 
\begin{itemize}
	\item Suppose $Y$ and $Z$ are nonfree and their fundamental classes have weights $q$, $r$, respectively. Then
	\[
	[Y]\smile[Z] = [Y\cap Z]_{q+r}.
	\]
	\item Suppose $Y$ is nonfree and $Z$ is free. Then for every $r$,
	\[
	[Y]\smile[Z]_r = [Y\cap Z]_{q+r}.
	\]
	\item Suppose $Y$ and $Z$ are both free. Then for every $r$, $s$,
	\[
	[Y]_r\smile[Z]_s = [Y\cap Z]_{r+s}.
	\]
\end{itemize}
\end{theorem}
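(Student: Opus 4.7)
The plan is to mimic the classical nonequivariant intersection-product argument: reduce to a local computation in the Thom space of the normal bundle to $Y \cap Z$ via excision, then exploit multiplicativity of Thom classes under Whitney sum. The essential geometric input is the canonical splitting
\[
N_{Y \cap Z} \;\cong\; N_Y|_{Y \cap Z} \,\oplus\, N_Z|_{Y \cap Z}
\]
coming from transverse intersection; since $Y \cap Z$ is assumed free, both summands and their sum are free $C_2$-vector bundles and Theorem~\ref{intro2} applies.

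First I would unpack the definitions: each $[Y]$ (respectively $[Y]_r$) is the image under $H^{*,*}(X, X - Y) \to H^{*,*}(X)$ of the excised Thom class $u_{N_Y}$ (respectively $u_{N_Y, r}$), and similarly for $[Z]$. Their cup product lies naturally in $H^{*,*}(X, X - (Y \cap Z))$, and a tubular neighborhood of $Y \cap Z$ chosen to sit inside tubular neighborhoods of both $Y$ and $Z$ excises this group to $H^{*,*}(N_{Y \cap Z}, N_{Y \cap Z} - 0)$. It therefore suffices to identify the lifted product as a specific Thom class of $N_{Y \cap Z}$, whose image in $H^{*,*}(X)$ is by definition the corresponding fundamental class of $Y \cap Z$. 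To restrict $u_{N_Y}$ (in Cases 1 and 2, where $Y$ is nonfree) to a class on the free bundle $N_Y|_{Y \cap Z}$, I would observe that the restriction has the correct bidegree and restricts to the nonzero class on each fiber (by naturality of that defining property from Theorem~\ref{intro1}); uniqueness in Theorem~\ref{intro2} then forces it to equal $u_{N_Y|_{Y \cap Z}, q}$. For the free classes $u_{N_Z, r}$ (and $u_{N_Y, r}$ in Case 3), the naturality clause in Theorem~\ref{intro2}(iii) handles the restriction directly, preserving the weight.

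The remaining step is a Whitney-sum formula for free Thom classes,
\[
u_{E_1, q} \smile u_{E_2, r} \;=\; u_{E_1 \oplus E_2, \, q + r},
\]
which I would prove by showing the left-hand side has the correct bidegree and restricts to the nonzero class on fibers, invoking Theorem~\ref{intro2}(i) and the nonequivariant Whitney-sum formula for singular Thom classes to verify the fiber condition, and then appealing to the uniqueness in Theorem~\ref{intro2}. Applied to the excised product, this identifies it with the Thom class of $N_{Y\cap Z}$ in weight $q+r$ (respectively $q+r$ or $r+s$), completing the three cases. I expect the main technical obstacle to be precisely this Whitney-sum step: since Theorem~\ref{intro2} furnishes a whole $\Z$-indexed family of candidate Thom classes on a free bundle, pinning down that the cup product lands in exactly the weight-$(q+r)$ member requires carefully combining the fiberwise analysis with the forgetful map, rather than following from a formal argument in $H^{*,*}$ alone.
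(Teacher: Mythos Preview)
Your proposal is correct and takes essentially the same approach as the paper: both use the transversality splitting $N_{Y\cap Z}\cong N_Y|_{Y\cap Z}\oplus N_Z|_{Y\cap Z}$ and identify the product of Thom classes via restriction to fibers together with the uniqueness clause for free Thom classes. The only cosmetic difference is packaging---the paper sets this up via Pontryagin--Thom collapse maps $p_A:X_+\to A^{N_A}$ and a commutative diagram with the external product $\mu$, whereas you use the relative cup product $H^{*,*}(X,X-Y)\otimes H^{*,*}(X,X-Z)\to H^{*,*}(X,X-(Y\cap Z))$ and excision; the fiber-restriction-plus-uniqueness step (your ``Whitney-sum formula'') is exactly what the paper does in the line ``If we restrict to any fiber over a point in $W$\dots by the uniqueness of Thom classes.''
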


In the last section of this paper, we show these classes give a geometric interpretation for the Bredon cohomology of $C_2$-surfaces. Specifically, we prove the following theorem.

\begin{theorem}\label{intro3}
As an $\M_2$-module, the Bredon cohomology of any $C_2$-surface is generated by fundamental classes of submanifolds. 
\end{theorem}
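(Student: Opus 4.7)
The plan is to combine the classification of closed $C_2$-surfaces with the module decomposition theorem of \cite[Theorem 5.1]{M2}, which guarantees that $H^{*,*}(X;\underline{\Z/2})$ splits as a direct sum of shifted copies of $\M_2$ and of $A_n$. Since the claim is only about module generation, it suffices, for each $C_2$-surface $X$ in the classification and each summand of its cohomology, to exhibit an equivariant submanifold whose fundamental class (or, in the free case, whose family of fundamental classes) generates that summand. The author's earlier work \cite{Ha} presumably supplies both the explicit list of $C_2$-surfaces and the precise decomposition of each cohomology module, so the problem reduces to a finite, surface-by-surface matching of generators to summands.

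The candidate generators are dictated by Theorems \ref{intro1} and \ref{intro2}. A free summand $\Sigma^{k,q}\M_2$ should be generated by a codimension-$k$ nonfree submanifold whose equivariant normal bundle has maximum fixed-set weight $q$; on a surface the candidates are the whole space (for $k=0$), fixed or reflection circles (for $k=1$, with $q\in\{0,1\}$ read off from the weights of the normal fibers over fixed points), and isolated fixed points (for $k=2$, with $q$ determined by the tangent representation). A shifted summand $\Sigma^{k,0}A_0$ should be generated by the infinite tower of free fundamental classes $\{[Y]_q\}$ of a codimension-$k$ free submanifold, using Theorem \ref{intro2}. A preliminary structural point that would narrow the search considerably is to verify that on $C_2$-surfaces only $A_n$-summands with $n=0$ appear, reflecting the fact that the fixed set of a surface is at most one-dimensional.

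The main obstacle will be checking that each chosen class actually lands in and generates the claimed summand rather than in a smaller submodule. The natural verification tools are the nonzero singular image (Theorems \ref{intro1}(i) and \ref{intro2}(i)) and the restriction-to-fixed-set formula (Theorem \ref{intro1}(iv)), which together detect nontriviality modulo $\rho$ and $\tau$ and are often enough to pin down a free module generator. The most delicate point is illustrated by Example \ref{introex3}: the free fundamental classes $[Y]_q$ of a codimension-two free submanifold can vanish in high weights, so not every free submanifold generates a full $A_0$-summand. The resolution I would pursue is to choose the free submanifold carefully, for instance one that meets a nontrivial fixed circle transversally so that intersection with a nonfree class forces nonvanishing in every weight, or, failing that, to combine several free submanifolds using additivity of fundamental classes on disjoint unions. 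Once a correct generator is produced for each summand in each case of the classification, direct comparison with the decomposition of $H^{*,*}(X;\underline{\Z/2})$ completes the proof.
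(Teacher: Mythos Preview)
Your proposal identifies the right toolbox---the structure theorem, the forgetful map, restriction to the fixed set, and the free/nonfree dichotomy---but there is an organizational gap. The classification of $C_2$-surfaces is not a finite list but an infinite family parameterized by the invariants $(F,C,\beta)$, so a literal ``surface-by-surface matching'' is not available. The paper instead argues by induction on $\beta$-genus: free surfaces, spheres, and doubling spaces are the base cases (handled directly), and every other nonfree surface is obtained from one of lower $\beta$-genus by an $S^{1,0}$-, $S^{1,1}$-, or $FM$-surgery (Theorem \ref{nonfreeclass}). The substance of the proof is to show that each surgery adds at most two new summands and to \emph{construct} one or two new one-dimensional submanifolds generating them. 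One is the obvious core circle $C_1$ of the attached handle or M\"obius band; the other, $C_\gamma$, is built by hand as a path from a fixed point on $C_1$ to a fixed point elsewhere together with its conjugate, so that $C_\gamma\cong S^{1,1}$ meets $C_1$ transversally in a single point. Nothing in the bare classification or in the module decomposition of \cite{Ha} hands you this circle; the surgery description is what makes its construction possible.

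The independence verification is also more delicate than your sketch suggests. The forgetful map and fixed-set restriction do not suffice to separate $[C_\gamma]$ from the inherited classes. The paper uses the intersection product: since $C_1$ misses all previously chosen submanifolds, $[C_1]\smile[Y_i]=[C_1]\smile[F_j]_q=0$, whereas $[C_1]\smile[C_\gamma]=[p]\neq 0$, forcing $[C_\gamma]$ outside the old span. The class $[C_1]$ is then separated by restricting along the inclusion of a single fixed point $\iota_p$ (Lemma \ref{includept}). Your remark about using intersection with a nonfree class is in the right direction, but it is precisely the ability to manufacture a suitable transverse partner $C_\gamma$ that makes the argument go, and that is what the inductive surgery framework provides.
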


\begin{rmk} 
There are many examples of $C_2$-surfaces whose Bredon cohomology is infinitely generated as an $\M_2$-module; see Example \ref{introex3}. This is unsurprising given that the modules $A_n=\tau^{-1}\M_2/(\rho^{n+1})$ are infinitely generated. Though, there is always a finite list of submanifolds whose fundamental classes generate the cohomology, and any free $A_i$-summand is generated by fundamental classes of free submanifolds. 
\end{rmk}

\subsection{Organization of the paper} 
There are two main topics in this paper, fundamental classes and the cohomology of Thom spaces. We define fundamental classes using Thom classes of normal bundles, so the topics are certainly related, but the full discussion of one is not required for the other. Section \ref{ch:backcoh} reviews relevant information about Bredon cohomology that is needed for either topic. After that, Section \ref{ch:eqvb} discusses Thom classes and the cohomology of Thom spaces. Section \ref{ch:funclasses} defines fundamental classes using the main theorems in Section \ref{ch:eqvb}. In Section \ref{ch:backsurf} we provide the necessary background on $C_2$-surfaces in order to then prove Theorem \ref{intro3} in Section \ref{ch:funclassessurf}. Finally Appendix \ref{ch:algproof} contains a technical algebraic proof needed to prove property (vi) of Theorem \ref{intro1}.

\subsection{Acknowledgements}
The work in this paper was part of the author's thesis project at University of Oregon. The author would like to thank her doctoral advisor Dan Dugger for all of his guidance and for many helpful conversations. The author would also like to thank Robert Lipshitz for his support. Lastly many thanks to the anonymous referee for their careful reading of this paper and for their helpful advice. 

\section{Background on Bredon cohomology}\label{ch:backcoh}
In this section, we review some preliminary facts about $RO(G)$-graded Bredon cohomology when $G=C_2$. While we regard this theory as graded on $RO(C_2)$, it should be noted Bredon cohomology is not canonically graded on $RO(G)$, and in general, one should be careful when working with these theories. These grading issues do not arise in this paper, so we do not expand on that point here; instead see \cite[Chapters IX.5 and XIII.1]{M2} for a more careful discussion of the grading.

\subsection{Mackey functors} The coefficients of any $RO(G)$-graded Bredon cohomology theory are what is known as a Mackey functor. In general, the definition of a Mackey functor requires some work, and a general exposition of Mackey functors can be found in Shulman's thesis \cite[Section 2.2]{Sh} and in May's book \cite[Chapter IX.4]{M2}. In the case of $G=C_2$, the definition can be distilled to the following.

\begin{definition} A \textbf{Mackey functor} $M$ for $G=C_2$ is the data of 
\begin{center}
\begin{tikzcd}[column sep=tiny]
M:&M(C_2)\arrow[out=-120, in=-60, loop, distance=0.5cm, "t^*" below ] \arrow[rr,shift right=.6ex,"p_*" below] & ~&M(\ast)\arrow[ll,shift right=.6ex,"p^*" above]
\end{tikzcd}
\end{center}
where $M(C_2)$ and $M(\ast)$ are abelian groups, and $p^*$, $p_*$, $t^*$ are homomorphisms that satisfy 
\begin{enumerate}
\item[(i)] $(t^*)^2 = id$,
\item[(ii)] $t^*\circ p^*=p^*$,
\item[(iii)] $p_*\circ t^*=p_*$, and
\item[(iv)] $p^*\circ p_*=1+t^*$.
\end{enumerate}
\end{definition}

Given an abelian group $B$, we can form the \emph{constant Mackey functor} $\underline{B}$ where $\underline{B}(C_2)=\underline{B}(\ast)=B$, $t^*=id$, $p_*=2$, and $p^*=id$. We will be concerned with the following constant Mackey functor:
\begin{center}
	\begin{tikzcd}[column sep=tiny]
	\underline{\Z/2}:& \Z/2\arrow[out=-120, in=-60, loop, distance=0.5cm, "1" below ] 	\arrow[rr,shift right=.6ex,"0" below] & ~& \Z/2\arrow[ll,shift right=.6ex,"1" above]
	\end{tikzcd}
\end{center}

\subsection{Bigraded theory} 
For a group $G$, Bredon cohomology is graded on $RO(G)$, the Grothendieck ring of finite-dimensional, real, orthogonal $G$-representations. When $G$ is the cyclic group of order two, any such $C_2$-representation $V$ is isomorphic to a direct sum of copies of the trivial representation $\R_{triv}$ and copies of the sign representation $\R_{sgn}$. Up to isomorphism, $V$ is entirely determined by its dimension and the number of sign representations appearing in this decomposition. It follows that $RO(C_2)$ is a rank $2$ free abelian group with generators given by $[\R_{triv}]$ and $[\R_{sgn}]$. For brevity, we will write $\R^{p,q}$ for the representation $\R_{triv}^{p-q}\oplus \R_{sgn}^{q}$. We will also write $\R^{p,q}$ for the element of $RO(C_2)$ that is equal to $(p-q)[\R_{triv}] + q[\R_{sqn}]$. For the cohomology groups, we will write $H^{p,q}(X;M)$ for the cohomology group $H^{\R^{p,q}}(X;M)$.

Given any finite-dimensional, real, orthogonal, $G$-representation $V$ we can form the one-point compactification $\hat{V}$. This new space will be an equivariant sphere which we will denote $S^V$; such spaces are referred to as \textbf{representation spheres}. Using these representation spheres, we can form equivariant suspensions. Whenever we have a based $G$-space $X$, we can form the $V$-th suspension of $X$ by
	\[
	\Sigma^VX=S^V \wedge X.
	\]
Note the basepoint must be a fixed point. Often when working with free spaces we will add a disjoint basepoint in order to form suspensions and cofiber sequences. We use the common notation of $X_+$ for $X \sqcup \{*\}$ where the disjoint basepoint is understood to be fixed by the action. 

An important feature of Bredon cohomology is that we have suspension isomorphisms: given any finite dimensional, real, orthogonal $G$-representation, there are natural isomorphisms
	\[
	\Sigma^{V}: \tilde{H}^{\alpha}(-;M) \to \tilde{H}^{\alpha+V}(\Sigma^V(-);M).
	\]
Given a cofiber sequence of based $G$-spaces
	\[
	A\overset{f}{\to} X \to C(f)
	\] 
we can form the Puppe sequence
	\[
	A\to X \to C(f) \to \Sigma^{\mathbf{1}}A \to \Sigma^{\mathbf{1}}C(f) \to \Sigma^{\mathbf{1}}X \to \dots 
	\]
where $\mathbf{1}$ is the one-dimensional trivial representation. From the suspension isomorphism this yields a long exact sequence
	\[
	\H^{V}(A)\leftarrow \H^{V}(X)\leftarrow \H^{V}(C(f)) \leftarrow \H^{V-\mathbf{1}}(A)\leftarrow \H^{V-\mathbf{1}}(X)\leftarrow \dots
	\]
for each representation $V\in RO(G)$.

When $G=C_2$, we have already discussed how the Bredon cohomology theory is a bigraded theory, and we will carry this notation over when discussing representation spheres and equivariant suspensions. In particular, we will denote $S^{\R^{p,q}}$ by $S^{p,q}$ and for a based space $X$ we will denote $\Sigma^{\R^{p,q}}X$ by $\Sigma^{p,q}X$. Translating the above into this notation, we have natural isomorphisms
	\[
	\Sigma^{p,q}:\tilde{H}^{a,b}(-;M) \to \tilde{H}^{a+p,b+q}(\Sigma^{p,q}(-);M)
	\]
for all $p,q\geq 0$. Given a cofiber sequence we have long exact sequences
	\[
	\dots\to\H^{p,q}(C(f))\to H^{p,q}(X)\to H^{p,q}(A) \to \H^{p+1,q}(C(f))\to H^{p+1,q}(X)\to \dots
	\]
for each $q\in \Z$.

Three particular representation spheres will appear often in this paper, namely $S^{1,1}$, $S^{2,1}$, and $S^{2,2}$. We include an illustration of these equivariant spheres in Figure \ref{fig:spheres}. The fixed set is shown in {\textbf{\color{blue}{blue}}} while the arrow is used to indicate the action of $C_2$ on the space. \smallskip

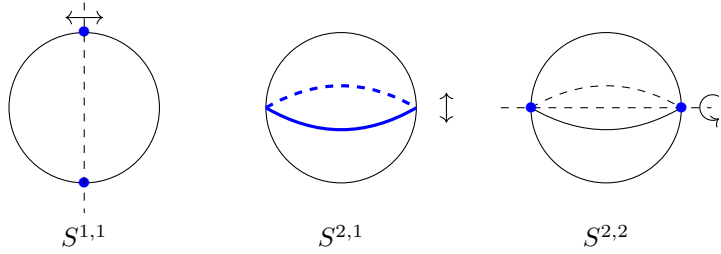
\begin{figure}[ht]
	\begin{tikzpicture}
		\draw (0,0) circle (1cm);
		\draw[blue] (0,-1) node{$\bullet$};
		\draw[blue] (0,1) node{$\bullet$};
		\draw[dashed] (0,1.4)--(0,-1.4);
		\draw[<->] (-.25,1.2)--(.25,1.2);
		\draw (0,-1.7) node{$S^{1,1}$};
	\end{tikzpicture}\hspace{.5in}
	\begin{tikzpicture}
		\draw (0,0) circle (1cm);
		\draw[blue,very thick] (-1,0) to[out=330,in=210](1,0);
		\draw[blue,very thick, dashed] (-1,0) to[out=30,in=150](1,0);
		\draw (0,-1.7) node{$S^{2,1}$};
		\draw[<->] (1.4,-.2)--(1.4,.2);
	\end{tikzpicture}\hspace{.2in}
	\begin{tikzpicture}
		\draw (0,0) circle (1cm);
		\draw (-1,0) to[out=330,in=210](1,0);
		\draw[dashed] (-1,0) to[out=30,in=150](1,0);
		\draw[dashed] (-1.4,0)--(1.4,0);
		\draw[->] (1.5,.15) arc (60:300:.17cm);
		\draw (0,-1.7) node{$S^{2,2}$};
		\draw[blue] (1,0)node{$\bullet$};
		\draw[blue] (-1,0)node{$\bullet$};
	\end{tikzpicture}
	\caption{Some representation spheres.}
	\label{fig:spheres}
\end{figure}

\subsection{The cohomology of orbits} 
Given any $C_2$-space $X$ we have an equivariant map $X\to pt$ where $pt$ denotes a single point with the trivial action. On cohomology, this gives a map of rings $H^{*,*}(pt;\underline{\Z/2}) \to H^{*,*}(X;\underline{\Z/2})$. Thus the cohomology of $X$ is a module over the cohomology of a point, which we denote $\M_2=H^{*,*}(pt;\underline{\Z/2})$.  Below we describe the cohomology of $pt=C_2/C_2$ as well as the cohomology of the free orbit $C_2$. These computations have been done many times, and are often attributed to unpublished notes of Stong.  The computation for coefficients in any constant Mackey functor can be found in work of Lewis \cite[Section 2]{L}. A computation for constant integer coefficients can also be found in work of Dugger \cite[Appendix B]{D1}, and the same methods used there can be used to compute the cohomology of orbits in constant $\Z/2$ coefficients.

In $\underline{\Z/2}$-coefficients, the cohomology of a point is given by the trivial square zero extension 
	\[
	\M_2\cong \Z/2[\rho,\tau] \oplus \frac{\Z/2(\rho,\tau)}{\Z/2[\rho,\tau]}\{\theta\}
	\]
where $|\rho|=(1,1)$, $|\tau|=(0,1)$, and $|\theta|=(0,-2)$. This ring is illustrated in the left-hand grid shown in Figure \ref{fig:point}. The $(p,q)$ spot on the grid refers to the $\R^{p,q}$-cohomology group. Each dot represents a copy of $\Z/2$, and we adopt the convention that the $(p,q)$ group is plotted up and to right of the $(p,q)$ coordinate. For example, $H^{0,0}(pt;\underline{\Z/2})$ is isomorphic to $\Z/2$, while $H^{1,0}(pt;\underline{\Z/2})$ is zero. 

\begin{figure}[ht]
	\begin{tikzpicture}[scale=.43]
		\draw[help lines,gray] (-5.125,-5.125) grid (5.125, 5.125);
		\draw[<->] (-5,0)--(5,0)node[right]{$p$};
		\draw[<->] (0,-5)--(0,5)node[above]{$q$};
		\foreach \y in {0,1,2,3,4}
			\draw (0.5,\y+.5) node{\small{$\bullet$}};
		\cone{0}{0}{black};
		\foreach \y in {1,2,3,4}
			\draw (1.5,\y+.5) node{\small{$\bullet$}};
		\foreach \y in {2,3,4}
			\draw (2.5,\y+.5) node{\small{$\bullet$}};
		\foreach \y in {3,4}
			\draw (3.5,\y+.5) node{\small{$\bullet$}};
		\foreach \y in {4}
			\draw (4.5,\y+.5) node{\small{$\bullet$}};
		\foreach \y in {-1,-2,-3,-4}
			\draw (.5,\y-.5) node{\small{$\bullet$}};
		\cone{0}{0}{black};
		\foreach \y in {-2,-3,-4}
			\draw (-.5,\y-.5) node{\small{$\bullet$}};
		\foreach \y in {-3,-4}
			\draw (-1.5,\y-.5) node{\small{$\bullet$}};
		\foreach \y in {-4}
			\draw (-2.5,\y-.5) node{\small{$\bullet$}};
		\cone{0}{0}{black};
		\draw[thick](1.5,5)--(1.5,1.5)node[below, right]{$\rho$};
		\draw[thick](2.5,2.5)--(2.5,5);
		\draw[thick](3.5,3.5)--(3.5,5);
		\draw[thick](4.5,4.5)--(4.5,5);
		\draw[thick](.5,1.5)node[xshift=-2.2ex]{$\tau$}--(4,5);
		\draw[thick](.5,2.5)--(3,5);
		\draw[thick](.5,3.5)--(2,5);
		\draw[thick](.5,4.5)--(1,5);
		\draw[thick](-.5,-2.5)--(-.5,-5);
		\draw[thick](-1.5,-3.5)--(-1.5,-5);
		\draw[thick](-2.5,-4.5)--(-2.5,-5);
		\draw[thick](.5,-2.5)--(-2,-5);
		\draw[thick](.5,-3.5)--(-1,-5);
		\draw[thick](.5,-4.5)--(0,-5);
		\draw (1,-1.5) node{$\theta$};
		\draw (1,-2.5) node{$\frac{\theta}{\tau}$};
		\draw (1,-3.5) node{$\frac{\theta}{\tau^2}$};
		\draw (1,-4.5) node{$\frac{\theta}{\tau^3}$};
		\draw (-.7,-1.7) node{$\frac{\theta}{\rho}$};
		\draw (-1.7,-2.7) node{$\frac{\theta}{\rho^2}$};
		\draw (-2.7,-3.7) node{$\frac{\theta}{\rho^3}$};
	\end{tikzpicture}
	\hspace{0.5in}
	\begin{tikzpicture}[scale=.43]
		\draw[help lines,gray] (-5.125,-5.125) grid (5.125, 5.125);
		\draw[<->] (-5,0)--(5,0)node[right]{$p$};
		\draw[<->] (0,-5)--(0,5)node[above]{$q$};
		\cone{0}{0}{black};
	\end{tikzpicture}
	\caption{ The ring $\M_2=H^{*,*}(pt; \underline{\Z/2})$.}
	\label{fig:point}
\end{figure}
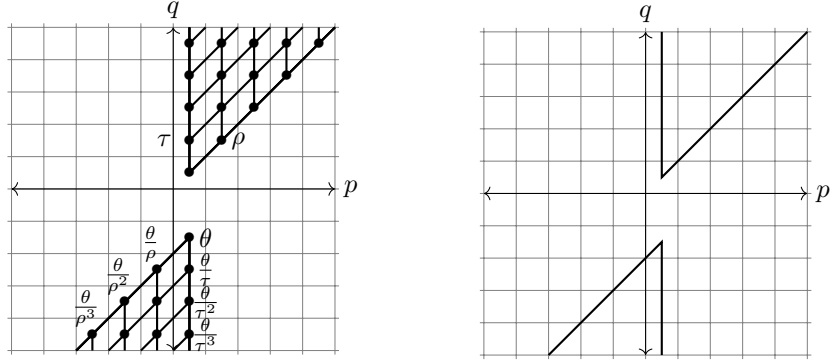

We will often refer to portion of the cohomology in the first quadrant as the \textbf{top cone} and refer to the other portion as the \textbf{bottom cone}. The top cone is polynomial in the elements $\rho$ and $\tau$. Multiplication by $\tau$ is indicated with vertical lines, and multiplication by $\rho$ is indicated with diagonal lines. The nonzero element $\theta$ in bidegree $(0,-2)$ is divisible by all non-zero elements in the top cone. 

In practice, it is often easier to work with an abbreviated picture, which is given on the right-hand grid in the above figure. It is understood that there is a $\Z/2$ at each spot within the top cone and within the bottom cone with the relations described above.

Note we now know the cohomology of all representation spheres by the suspension isomorphism. Specifically,
	\[
	\H^{*,*}(S^{p,q};\underline{\Z/2})\cong \Sigma^{p,q}\M_2
	\]
where $\Sigma^{p,q}\M_2$ denotes the free $\M_2$-module generated in bidegree $(p,q)$. Visually, we just shift the picture $(p,q)$ units.
 \medskip

We also include the cohomology of the free orbit $C_2$. As a ring, $H^{*,*}(C_2;\underline{\Z/2})$ is isomorphic to $\Z/2[u,u^{-1}]$ where $u$ is in bidegree $(0,1)$. As an $\M_2$-module, $H^{*,*}(C_2;\underline{\Z/2})$ is isomorphic to $\tau^{-1}\M_2/(\rho)$. See Figure \ref{fig:free} for the pictorial representation of this module and its abbreviated version. In these module pictures, action by $\tau$ is indicated by vertical lines, while action by $\rho$ is indicated by diagonal lines. 

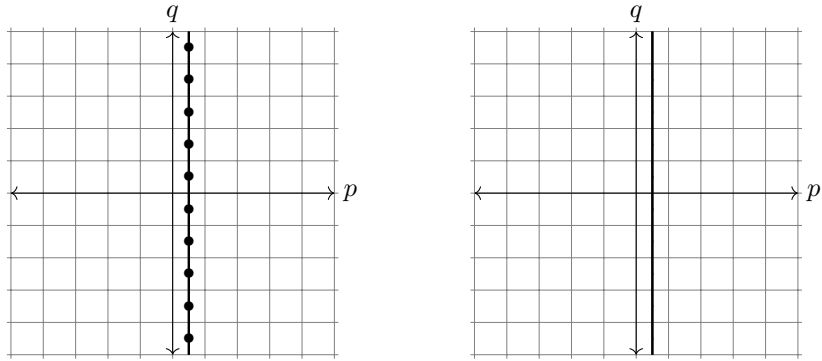
\begin{figure}[ht]
	\begin{tikzpicture}[scale=.43]
		\draw[help lines,gray] (-5.125,-5.125) grid (5.125, 5.125);
		\draw[<->] (-5,0)--(5,0)node[right]{$p$};
		\draw[<->] (0,-5)--(0,5)node[above]{$q$};
		\anti{0}{0}{black};
		\foreach \y in {-5,...,4}
			\draw (0.5,\y+.5) node{\small{$\bullet$}};
	\end{tikzpicture}\hspace{0.5 in}
	\begin{tikzpicture}[scale=.43]
		\draw[help lines,gray] (-5.125,-5.125) grid (5.125, 5.125);
		\draw[<->] (-5,0)--(5,0)node[right]{$p$};
		\draw[<->] (0,-5)--(0,5)node[above]{$q$};
		\anti{0}{0}{black};
	\end{tikzpicture}
\caption{The cohomology of $C_2$ as an $\M_2$-module.}
\label{fig:free}
\end{figure} 

\subsection{Cohomology of the antipodal spheres}
We review the cohomology of another family of spheres. The importance of these spaces will be clear shortly. Let $S^n_a$ denote the equivariant $n$-sphere whose $C_2$-action is given by the antipodal map. The cohomology of the antipodal spheres is given by 
	\[
	H^{*,*}(S^n_a;\underline{\Z/2}) \cong\tau^{-1}\M_2/(\rho^{n+1})
	\]
as an $\M_2$-module. This result is well known, but a detailed computation can be found in \cite[Example 3.4]{Ha}. The module $\tau^{-1}\M_2/(\rho^{n+1})$ can be represented pictorially as shown in Figure \ref{fig:anti}. 
\begin{figure}[ht]
	\begin{tikzpicture}[scale=.43]
		\draw[help lines,gray] (-1.125,-5.125) grid (7.125, 5.125);
		\draw[<->] (-1,0)--(7,0)node[right]{$p$};
		\draw[<->] (0,-5)--(0,5)node[above]{$q$};
		\foreach \x in {0,...,5}
		\draw[thick] (\x+1/2, -5)--(\x+1/2, 5);
		\draw[thick] (1/2,-4.5)--(5.5,.5);
		\draw[thick] (1/2,-3.5)--(5.5,1.5);
		\draw[thick] (1/2,-2.5)--(5.5,2.5);
		\draw[thick] (1/2,-1.5)--(5.5,3.5);
		\draw[thick] (1/2,-.5)--(5.5,4.5);
		\draw[thick] (1/2, .5)--(5,5);
		\draw[thick] (1/2, 1.5)--(4,5);
		\draw[thick] (1/2, 2.5)--(3,5);
		\draw[thick] (1/2, 3.5)--(2,5);
		\draw[thick] (1/2, 4.5)--(1,5);
		\draw[thick] (5.5, -.5)--(1,-5);
		\draw[thick] (5.5, -1.5)--(2,-5);
		\draw[thick] (5.5, -2.5)--(3,-5);
		\draw[thick] (5.5, -3.5)--(4,-5);
		\draw[thick] (5.5, -4.5)--(5,-5);
		\foreach \x in {0,...,5}
			{
			\foreach \y in {-5,-4,-3,-2,-1,0,1,2,3,4}
				\draw (\x+.5,\y+.5) node{\small{$\bullet$}};
			}
	\end{tikzpicture}\hspace{0.5in}
	\begin{tikzpicture}[scale=.43]
		\draw[help lines,gray] (-1.125,-5.125) grid (7.125, 5.125);
		\draw[<->] (-1,0)--(7,0)node[right]{$p$};
		\draw[<->] (0,-5)--(0,5)node[above]{$q$};
		\foreach \x in {0,5}
			\draw[thick] (\x+1/2, -5)--(\x+1/2, 5);
		\draw[thick] (1/2,-4.5)--(5.5,.5);
		\draw[thick] (1/2,-3.5)--(5.5,1.5);
		\draw[thick] (1/2,-2.5)--(5.5,2.5);
		\draw[thick] (1/2,-1.5)--(5.5,3.5);
		\draw[thick] (1/2,-.5)--(5.5,4.5);
		\draw[thick] (1/2, .5)--(5,5);
		\draw[thick] (1/2, 1.5)--(4,5);
		\draw[thick] (1/2, 2.5)--(3,5);
		\draw[thick] (1/2, 3.5)--(2,5);
		\draw[thick] (1/2, 4.5)--(1,5);
		\draw[thick] (5.5, -.5)--(1,-5);
		\draw[thick] (5.5, -1.5)--(2,-5);
		\draw[thick] (5.5, -2.5)--(3,-5);
		\draw[thick] (5.5, -3.5)--(4,-5);
		\draw[thick] (5.5, -4.5)--(5,-5);
	\end{tikzpicture}
	\caption{The $\M_2$-module $\tau^{-1}\M_2/(\rho^{n+1})$ when $n=5$.}
	\label{fig:anti}
\end{figure}
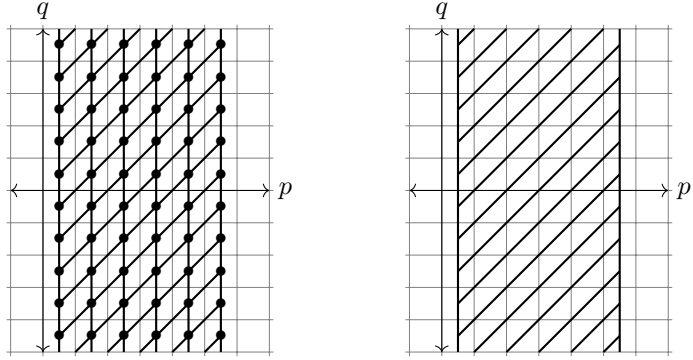

\subsection{Some helpful theorems} 
We conclude this section by collecting some helpful lemmas and theorems that will be used in the paper. Many of the statements involve finite $C_2$-CW complexes, so we first provide a definition of such spaces.

\begin{definition} 
A \textbf{$\mathbf{C_2}$-CW complex} is a $C_2$-space with a filtration $X^0\subset X^1\subset \dots$ such that $X^0$ consists of a disjoint union of orbits $C_2/C_2$ and $C_2$, and such that $X^{n+1}$ is obtained from $X^{n}$ by attaching cells of the form $C_2/H_\alpha\times D^n$ for $H_\alpha =C_2$ or $H_\alpha = \{e\}$ via the usual pushout diagram:
\begin{center}
	\begin{tikzcd}
	\bigsqcup_{\alpha} C_2/H_\alpha \times S^{n-1} \arrow[r] \arrow[d,hook]& X^{n-1}\arrow[d]\\
	\bigsqcup_{\alpha} C_2/H_\alpha \times D^{n} \arrow[r]& X^n
	\end{tikzcd}
\end{center}
\end{definition}

We begin with a lemma that relates the singular cohomology, the Bredon cohomology, and the action of $\rho$. This statement follows from considering the long exact sequence associated to $S^{0}\overset{\rho}{\to} S^{1,1} \to C_{2+}\wedge S^{1,1}$ and is originally due to \cite{AM}.
\begin{lemma}\label{forgetfulles}(The forgetful long exact sequence). 
Let $X$ be a pointed $C_2$-space. For every integer $q$, we have a long exact sequence
\begin{center}
	\begin{tikzcd}
		\arrow[r] & \tilde{H}^{p-1,q}(X) \arrow[r,"\rho \cdot"]& \tilde{H}^{p,q+1}(X) \arrow[r,"\psi"] & \tilde{H}^{p}_{sing}(X) \arrow[r] & \tilde{H}^{p,q}(X) \arrow[r] & ~
	\end{tikzcd}
\end{center}
where the coefficients are understood to be $\underline{\Z/2}$. 
\end{lemma}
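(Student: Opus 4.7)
The plan is to derive the long exact sequence from the Puppe extension of the cofibration suggested in the statement. Let $\sigma = \R^{1,1}$ denote the sign representation. Since the unit sphere $S(\sigma) \cong C_2$ carries the free action and the unit disk $D(\sigma)$ is equivariantly contractible, the standard cofiber sequence $S(\sigma)_+ \to D(\sigma)_+ \to S^\sigma$ reduces, up to equivariant equivalence, to
$$C_{2+} \to S^0 \xrightarrow{\rho} S^{1,1}.$$
One Puppe rotation then produces the cofiber sequence $S^0 \xrightarrow{\rho} S^{1,1} \to C_{2+}\wedge S^{1,1}$ indicated in the statement, using the equivariant equivalence $\Sigma C_{2+} \simeq C_{2+}\wedge S^{1,1}$ (itself a special case of $C_{2+}\wedge V \simeq C_{2+}\wedge V^{triv}$ for any based $C_2$-space $V$).

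Next, I would smash with a pointed $C_2$-space $X$ to obtain the cofiber sequence $X \xrightarrow{\rho\wedge 1} \Sigma^{1,1}X \to C_{2+}\wedge \Sigma^{1,1}X$, and apply $\tilde{H}^{*,*}(-;\underline{\Z/2})$ to produce the associated long exact sequence. Three standard identifications convert this raw sequence into the claimed form: the suspension isomorphism $\tilde{H}^{p,q+1}(\Sigma^{1,1}X) \cong \tilde{H}^{p-1,q}(X)$; the weight-independent change-of-groups formula $\tilde{H}^{p,q}(C_{2+}\wedge Y;\underline{\Z/2}) \cong \tilde{H}^p_{sing}(Y;\Z/2)$, which follows from the free-forgetful adjunction together with the fact that the underlying nonequivariant spectrum of $H\underline{\Z/2}$ is $H\Z/2$; and the identification of $\Sigma^{1,1}X$ with $\Sigma X$ at the level of underlying spaces, contributing a degree shift in singular cohomology.

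Under these identifications, $(\rho\wedge 1)^*$ becomes multiplication by $\rho$ essentially by the definition of that class, while the Puppe connecting map $\tilde{H}^{p,q+1}(X) \to \tilde{H}^p_{sing}(X)$ corresponds to the forgetful map $\psi$. The principal obstacle will be the careful verification of this last identification of the connecting map with $\psi$, as this requires tracing through the change-of-groups isomorphism; once that is in place, the remainder is a bookkeeping exercise combining the suspension isomorphism, the change-of-groups computation, and the definitions of $\rho$ and $\psi$.
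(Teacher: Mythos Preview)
Your proposal is correct and follows exactly the route the paper indicates: the paper does not give a full proof but simply states that the lemma ``follows from considering the long exact sequence associated to $S^{0}\overset{\rho}{\to} S^{1,1} \to C_{2+}\wedge S^{1,1}$'' and cites \cite{AM}. Your argument unpacks precisely this cofiber sequence, and the additional details you supply (the change-of-groups identification of $\tilde{H}^{*,*}(C_{2+}\wedge -;\underline{\Z/2})$ with singular cohomology, and the identification of the connecting map with $\psi$) are the standard ones needed to make that one-line sketch into an actual proof.
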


We will refer to the map $\psi:\H^{p,q}(X) \to \H^{p}_{sing}(X)$ as the ``forgetful map". Note in $\M_2$, the element $\tau$ forgets to $1\in H^{0}_{sing}(pt)$, while $\rho$ forgets to zero. Indeed, by the exactness of the forgetful long exact sequence, for any $X$, a given cohomology class forgets to zero if and only if it is the image of $\rho$.

There are two types of spaces we will encounter in this paper whose cohomology is entirely dependent on the singular cohomology. These lemmas are likely well-known, but a recent proof in the same notation can be found in \cite[Section 3]{Ha}.

\begin{lemma}\label{trivial}
Let $X$ be a trivial finite $C_2$-CW complex. Then as $\M_2$-modules
	\[
	H^{*,*}(X;\underline{\Z/2})\cong \M_2\otimes_{\Z/2} H^{*}_{sing}(X;\Z/2).
	\]
\end{lemma}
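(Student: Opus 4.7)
The plan is cellular induction on the number of cells in a finite $C_2$-CW structure for $X$, using the forgetful long exact sequence (Lemma \ref{forgetfulles}) as the comparison tool. Since $X$ is trivial, every cell has the form $C_2/C_2 \times D^n = D^n$ with trivial $C_2$-action, so the attaching maps are automatically equivariant. For the base case $X = pt$, we have $H^{*,*}(pt;\underline{\Z/2}) = \M_2$ and $H^*_{sing}(pt;\Z/2) = \Z/2$, so $\M_2 \otimes_{\Z/2} \Z/2 = \M_2$ and the formula holds.

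For the inductive step, write $X = Y \cup_\phi D^n$ where $Y$ is obtained by removing a top $n$-cell and satisfies the formula by induction. The cofiber $X_+/Y_+$ is the trivial representation sphere $S^{n,0}$, whose reduced cohomology is $\tilde{H}^{*,*}(S^{n,0};\underline{\Z/2}) \cong \Sigma^{n,0}\M_2$, a free $\M_2$-module with generator in weight zero. I would apply the Puppe long exact sequence to $Y_+ \hookrightarrow X_+ \twoheadrightarrow S^{n,0}$ and compare it to the singular long exact sequence via the forgetful map $\psi$, obtaining a commutative ladder.

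The key observation is that, by the inductive hypothesis, $\tilde{H}^{*,*}(Y_+;\underline{\Z/2})$ is a free $\M_2$-module whose generators all lie in weight zero. Since $\M_2^{p,0}$ equals $\Z/2$ only when $p=0$, the weight-zero strand of $\M_2 \otimes_{\Z/2} \tilde{H}^*_{sing}(Y;\Z/2)$ is naturally $\tilde{H}^*_{sing}(Y;\Z/2)$, and the forgetful map restricts to a $\Z/2$-linear isomorphism there. Because the Bredon connecting homomorphism $\delta$ is $\M_2$-linear, it is determined by its weight-zero restriction, which matches the singular connecting map $\delta_{sing}$ by naturality of $\psi$. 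Consequently the Bredon long exact sequence is isomorphic to $\M_2$ tensored over $\Z/2$ with the singular long exact sequence. Since the singular sequence is one of $\Z/2$-vector spaces it splits into split short exact sequences, and tensoring with the free $\Z/2$-module $\M_2$ preserves these splittings, producing $\tilde{H}^{*,*}(X_+;\underline{\Z/2}) \cong \M_2 \otimes_{\Z/2} \tilde{H}^*_{sing}(X;\Z/2)$ as required.

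The main obstacle is verifying rigorously that the $\M_2$-linear connecting map is determined by its weight-zero restriction, and that the resulting splitting lifts compatibly to a splitting of the Bredon sequence. Both follow from the freeness and weight-zero concentration of generators supplied by the inductive hypothesis, combined with the naturality of the forgetful map; once these are in place, the inductive bookkeeping is routine and the passage between reduced and unreduced cohomology adds only the expected summand $\M_2$ for the basepoint.
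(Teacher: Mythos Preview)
Your argument is correct. The cellular induction is sound: both $\tilde H^{*,*}(Y_+)$ and $\tilde H^{*,*}(S^{n,0})$ are free $\M_2$-modules generated in weight zero, so the $\M_2$-linear connecting map is indeed determined by its weight-zero part; that part is identified with $\delta_{sing}$ via the forgetful ladder (note that $\psi$ in weight zero is an isomorphism here because $\M_2^{*,-1}=0$ forces the adjacent terms of the forgetful long exact sequence to vanish, a point you allude to but could state explicitly). The resulting short exact sequence of $\M_2$-modules has free quotient and therefore splits, giving the claimed isomorphism.

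As for comparison: the paper does not actually prove this lemma. It simply records the statement and cites \cite[Section 3]{Ha} for a proof in the same notation. Your cellular-induction approach is the standard one and is presumably close to what appears in that reference; the only alternative route worth mentioning is the one-line argument via the quotient lemma, namely that for trivial $X$ one has $H^{p,0}(X;\underline{\Z/2})\cong H^p_{sing}(X/C_2;\Z/2)=H^p_{sing}(X;\Z/2)$ naturally, after which tensoring up with $\M_2$ and a single five-lemma pass over the cell filtration finishes. Your version trades that quotient-lemma input for a more hands-on use of the forgetful long exact sequence, which is equally valid.
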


\begin{lemma}\label{times} 
Let $Y$ be a finite $C_2$-CW complex. The cohomology as an $\M_2$-module of the free $C_2$-space $C_2\times Y$ is given by
	\[
	H^{*,*}(Y\times C_2; \underline{\Z/2})\cong \tau^{-1}\M_2/(\rho)\otimes_{\Z/2} H^{*}_{sing}(Y;\Z/2).
	\]
\end{lemma}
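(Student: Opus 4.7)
The plan is to first reduce to the case of trivial $C_2$-action on $Y$, and then compute by cellular induction.

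Reduction step: I would show that $Y\times C_2$ with the diagonal action is $C_2$-equivariantly homeomorphic to $Y_u\times C_2$, where $Y_u$ denotes the underlying space of $Y$ equipped with the trivial action (and $C_2$ acts on $Y_u\times C_2$ only via translation on the second factor). The map $\phi(y,g)=(g^{-1}y,g)$ is an equivariant homeomorphism: under the diagonal action on the source, $\phi(\sigma y,\sigma g)=((\sigma g)^{-1}\sigma y,\sigma g)=(g^{-1}y,\sigma g)$, which agrees with $\sigma\cdot\phi(y,g)=(g^{-1}y,\sigma g)$ since $Y_u$ has trivial action. This allows me to assume from now on that $Y$ itself has trivial $C_2$-action.

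Induction step: Fix a CW structure on $Y$ with $c_n$ cells in dimension $n$ and induct on the skeleta $Y^{(n)}$. The base case $Y^{(0)}=\bigsqcup pt$ follows by additivity together with the computation $H^{*,*}(C_2;\underline{\Z/2})\cong \tau^{-1}\M_2/(\rho)$. For the inductive step, since all of $Y^{(n-1)}$, $Y^{(n)}$, and $\bigvee_{c_n}S^n$ carry the trivial action, the ordinary cellular cofiber sequence $Y^{(n-1)}_+\to Y^{(n)}_+\to \bigvee_{c_n} S^n$ is already a $C_2$-equivariant cofiber sequence. Smashing with the based $C_2$-space $(C_2)_+$ and using $(A\times B)_+\cong A_+\wedge B_+$ produces the equivariant cofiber sequence
\[
(Y^{(n-1)}\times C_2)_+ \;\to\; (Y^{(n)}\times C_2)_+ \;\to\; \bigvee_{c_n}\Sigma^{n,0}(C_2)_+.
\]
Applying $\H^{*,*}(-;\underline{\Z/2})$ and using $\H^{*,*}(\Sigma^{n,0}(C_2)_+;\underline{\Z/2})\cong\Sigma^{n,0}\tau^{-1}\M_2/(\rho)$, the resulting long exact sequence is concentrated in topological degrees $p=n-1$ and $p=n$ within each weight $q$, where it is a short four-term sequence relating the unknown groups of $Y^{(n)}\times C_2$ to the inductively known groups of $Y^{(n-1)}\times C_2$ via a connecting map into $\Z/2^{c_n}$.

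The main obstacle is to identify the connecting homomorphism in this long exact sequence with the ordinary singular cellular coboundary of $Y$. This follows from naturality: the equivariant attaching maps are simply the products $f_i\times 1_{C_2}$ of the singular attaching maps $f_i$, so on each weight $q$ the connecting map is precisely the singular cellular coboundary tensored with the identity on the one-dimensional $\Z/2$-vector space sitting in that weight of $\tau^{-1}\M_2/(\rho)$. Once this identification is in place, the very same cofiber sequence applied non-equivariantly assembles $H^*_{sing}(Y^{(n)};\Z/2)$ from $H^*_{sing}(Y^{(n-1)};\Z/2)$ via that coboundary, and combining with the inductive hypothesis yields $H^{*,*}(Y^{(n)}\times C_2;\underline{\Z/2})\cong \tau^{-1}\M_2/(\rho)\otimes_{\Z/2}H^*_{sing}(Y^{(n)};\Z/2)$. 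The $\M_2$-module structure then emerges automatically: $\tau$ acts as an isomorphism between adjacent weights (inherited from $\tau^{-1}\M_2/(\rho)$) and $\rho$ acts as zero, completing the induction.
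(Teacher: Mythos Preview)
Your argument is correct. The paper does not supply its own proof of this lemma; it simply cites \cite[Section 3]{Ha}, so there is no in-paper argument to compare against. Your reduction via the shearing homeomorphism $\phi(y,g)=(g^{-1}y,g)$ followed by cellular induction is a standard and complete route to the result.

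One small sharpening is available for the step you flag as the ``main obstacle.'' Rather than arguing ad hoc that the connecting map matches the singular cellular coboundary, you can invoke the natural isomorphism
\[
\H^{p,q}_{C_2}\bigl((C_2)_+\wedge X;\underline{\Z/2}\bigr)\;\cong\;\H^{p}_{sing}(X;\Z/2)
\]
(coming from the induction--restriction adjunction, or equivalently from the quotient lemma applied to the free space $(C_2)_+\wedge X$). Because this isomorphism is natural in $X$, applying it to the Puppe sequence of $Y^{(n-1)}_+\to Y^{(n)}_+\to\bigvee S^n$ smashed with $(C_2)_+$ immediately identifies the entire equivariant long exact sequence, in each fixed weight $q$, with the singular long exact sequence. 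This both handles the connecting map and makes the $\M_2$-module structure ($\tau$ invertible, $\rho$ acting by zero) transparent, since those actions are already visible on $\H^{*,*}((C_2)_+)$ and are carried through by naturality.
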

\begin{rmk}\label{freeiso}
In fact, the above can be generalized to state for any free, finite $C_2$-CW complex $Y$
	\[
	H^{*,*}(Y; \underline{\Z/2})\cong \Z/2[\rho,\tau^{\pm 1}]\underset{\Z/2[u]}{\otimes} H^{*}_{sing}(Y/C_2;\Z/2),
	\]
where the action of $u$ on the right is coming from Borel cohomology and the action on the left is coming from $u\mapsto \rho\tau^{-1}$. Borel cohomology is discussed in Section \ref{sec:borel} and the above isomorphism is explained in Remark \ref{freeboriso}.
\end{rmk}
We next mention an important theorem about the cohomology of $C_2$-manifolds. Here by ``$C_2$-manifold" we mean a piecewise linear manifold with a locally linear $C_2$-action. By closed, we simply mean a closed manifold in the nonequivariant sense. The proof of this theorem is given in \cite[Appendix A]{Ha}.

\begin{theorem}\label{topm2} 
Let $X$ be an $n$-dimensional, closed $C_2$-manifold with a nonfree $C_2$-action. Suppose $n-k$ is the largest dimension of submanifold appearing as a component of the fixed set. Then there is exactly one summand of $\H^{*,*}(X;\underline{\Z/2})$ of the form $\Sigma^{i,j}\M_2$ where $i\geq n$, and it occurs for $(i,j)=(n,k)$.
\end{theorem}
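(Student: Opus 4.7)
The plan is to combine May's structure theorem \cite[Theorem 5.1]{M2} with the forgetful long exact sequence of Lemma \ref{forgetfulles} and a $\rho$-localization comparison to the fixed set. By the structure theorem,
\[
\tilde{H}^{*,*}(X;\underline{\Z/2}) \;\cong\; \bigoplus_i \Sigma^{k_i,\ell_i}\M_2 \;\oplus\; \bigoplus_j \Sigma^{s_j,0}A_{r_j},
\]
so the task reduces to showing that the multiset $\{(k_i,\ell_i)\}_i$ contains exactly one pair with $k_i\geq n$, and that this pair is $(n,k)$.

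I would first rule out any summand $\Sigma^{k,\ell}\M_2$ with $k > n$. Since $X$ has the homotopy type of an $n$-dimensional CW complex, $\tilde{H}^p_{sing}(X;\Z/2) = 0$ for $p > n$, and the forgetful LES then forces $\rho \cdot : \tilde{H}^{p-1,q}(X) \to \tilde{H}^{p,q+1}(X)$ to be surjective whenever $p > n$. If such a summand appeared, its generator $x$ in bidegree $(k,\ell)$ would be of the form $\rho y$ for some $y \in \tilde{H}^{k-1,\ell-1}(X)$. Because $\rho$-multiplication respects the direct sum decomposition, the component of $y$ in our summand must itself $\rho$-map to $x$; but bidegree $(k-1,\ell-1)$ is outside the support of $\Sigma^{k,\ell}\M_2$ (too far left for the shifted top cone, and one too high for the shifted bottom cone), so this component vanishes and we have a contradiction. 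The same $\rho$-divisibility argument, reapplied at $(n,\ell)$ for a hypothetical extra summand with $k_i=n$ but $\ell_i\neq k$, will be used for uniqueness after multiplying by a power of $\tau$ to land in a common bidegree.

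For existence and uniqueness at $(n,k)$, I would $\rho$-localize. Since each $A_{r_j}$ is $\rho$-nilpotent, $\rho^{-1}\tilde{H}^{*,*}(X) \cong \bigoplus_i \Sigma^{k_i,\ell_i}\Z/2[\rho^{\pm 1},\tau]$, while the geometric fixed-points comparison together with Lemma \ref{trivial} on the trivial $C_2$-space $X^{C_2}$ identifies this localization with $\Z/2[\rho^{\pm 1},\tau]\otimes_{\Z/2}\tilde{H}^{*}_{sing}(X^{C_2};\Z/2)$. Matching bigradings shows that the multiset $\{k_i-\ell_i\}$ equals the topological dimensions of a $\Z/2$-basis of $\tilde{H}^*_{sing}(X^{C_2})$; its maximum is $n-k$, and for $X$ connected it is attained exactly once (by the top class of the top-dimensional fixed component). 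To pin down that the unique summand realizing this maximum has $(k_i,\ell_i)=(n,k)$ rather than some $(n-a,k-a)$ with $a>0$, I would construct a fundamental class $[X]\in H^{n,k}(X;\underline{\Z/2})$ directly: choose a top-dimensional fixed component $F$, equip its normal bundle (a sum of $k$ sign representations over a trivial base) with its Thom class in bidegree $(k,k)$, and push forward through a tubular neighborhood of $F$. This yields a class generating a free $\M_2$-summand whose forgetful image is $[X]_{sing}\in\tilde{H}^n_{sing}(X)\cong\Z/2$, forcing $\ell_i=k$. The main obstacle is exactly this last step: the $\rho$-localization only detects the difference $k_i-\ell_i$, so the geometric Thom-class construction (or, equivalently, an equivariant Poincar\'e duality statement with orientation weight $k$) is the essential input that pins down the second coordinate of the summand; everything else is purely algebraic manipulation of May's decomposition using the forgetful LES and the explicit shape of a $\Sigma^{k,\ell}\M_2$-summand.
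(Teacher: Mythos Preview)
Your overall strategy is sound in outline, but there are two genuine gaps.

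First, the claim that the maximum value $n-k$ in the multiset $\{k_i-\ell_i\}$ is attained exactly once is false in general. In Example~\ref{trefl} (the torus with the reflection action) the fixed set $X^{C_2}$ consists of two circles, so $\dim_{\Z/2}\tilde H^{\,n-k}_{sing}(X^{C_2})=2$, and indeed the structure decomposition there has \emph{two} free summands, $\Sigma^{1,0}\M_2$ and $\Sigma^{2,1}\M_2$, with $k_i-\ell_i=1$. The $\rho$-localization records only the differences $k_i-\ell_i$, so it cannot by itself single out which of these has $k_i=n$. Your $\tau$-shifted $\rho$-divisibility argument, combined with $\dim_{\Z/2} H^{n}_{sing}(X)=1$ for $X$ connected, does correctly give \emph{at most one} free summand with $k_i=n$; what is missing is existence.

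Second, your construction of the witness class is off. The Thom class of the normal bundle of a top-dimensional fixed component $F$ lives in bidegree $(k,k)$ and pushes forward to $[F]\in H^{k,k}(X)$, which forgets to $[F]_{sing}\in H^{k}_{sing}(X)$, not to the top class in $H^{n}_{sing}(X)$. Presumably you mean the fundamental class of a fixed \emph{point} $x\in F$ (whose normal bundle is $\R^{n,k}$), which does lie in $H^{n,k}(X)$ and forgets to the generator of $H^{n}_{sing}(X)$. But you then assert without argument that this class generates a free $\M_2$-summand. Merely forgetting to a nonzero singular class is not sufficient: a generator of an antipodal summand $\Sigma^{n,0}A_r$ is likewise not in the image of $\rho$ and forgets nontrivially. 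This is exactly the step the paper defers to \cite[Appendix~A]{Ha}, and which is sketched in the fixed-point example of Section~\ref{ch:funclasses}: one shows that the collapse map $q:X\to X/(X\setminus D)\cong S^{n,k}$, for $D$ a disk neighborhood of $x$, induces a \emph{split injection} on Bredon cohomology, so that $\Sigma^{n,k}\M_2\subset \tilde H^{*,*}(S^{n,k})$ embeds as a direct summand of $H^{*,*}(X)$. Establishing that splitting is the essential geometric input, and it is not recoverable from the purely algebraic manipulations you outline.
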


We conclude this section by recalling a structure theorem for the cohomology of finite $C_2$-CW complexes proven in \cite[Theorem 5.1]{M1}. Note it is important that $X$ is finite in the sense that it contains only finitely many cells. 

\begin{theorem}[\cite{M2}]\label{structure} For any finite $C_2$-CW complex $X$, we can decompose the $RO(C_2)$-graded cohomology of $X$ with constant $\underline{\Z/2}$-coefficients as
	\[
	H^{*,*}(X;\underline{\Z/2}) \cong (\oplus_i\Sigma^{p_i,q_i}\M_2 ) \oplus (\oplus_j \Sigma^{m_j ,0} A_{n_j} )
	\] 
as a module over $\M_2$ where $A_{n}$ denotes the cohomology of the $n$-sphere with the free antipodal action. Furthermore, the shifts are given by actual representations. That is, $p_i\geq q_i\geq 0$ for all $i$ and $m_j\geq 0$ for all $j$. 
\end{theorem}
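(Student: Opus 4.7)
The plan is to proceed by induction on the number of equivariant cells in a $C_2$-CW decomposition of $X$. The base cases are the single-orbit complexes $pt$ and $C_2$, whose cohomologies $\M_2$ and $A_0$ already have the claimed form.

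For the inductive step, suppose $X = Y \cup_\varphi (C_2/H_\alpha \times D^n)$ for a subcomplex $Y$. The cofiber sequence
$$ Y_+ \to X_+ \to (C_2/H_\alpha)_+ \wedge S^n $$
yields, via Lemmas \ref{trivial} and \ref{times}, a quotient whose cohomology is $\Sigma^{n,0}\M_2$ when $H_\alpha = C_2$ and $\Sigma^{n,0} A_0$ when $H_\alpha = \{e\}$; note the weight shift is $0$ because each disk $D^n$ carries the trivial action. Combined with the inductive hypothesis that $H^{*,*}(Y_+)$ has the asserted form, the induced long exact sequence exhibits $H^{*,*}(X_+)$ as a module extension in which both the submodule and the quotient are built from summands $\Sigma^{p,q}\M_2$ and $\Sigma^{m,0} A_n$. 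The task then reduces to showing that the extension itself stays within this class.

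The main obstacle is precisely this extension classification, together with the geometric observation that only extensions realizable by an attaching map can occur. The cases to handle are: extensions between two free summands, which split outright since $\Ext^1_{\M_2}$ vanishes in the relevant bidegrees; extensions of an $A_n$ summand by a free summand (or vice versa), which either split or glue together into a single $A_{n+1}$ summand, reflecting geometrically the fact that attaching a free $(n+1)$-cell to an antipodal $n$-sphere produces an antipodal $(n+1)$-sphere; and extensions between two $A_n$ summands, which again either split or lengthen into some $A_m$. The positivity constraints $p_i \geq q_i \geq 0$ and $m_j \geq 0$ are preserved because every cell is attached by a suspension indexed by the actual representation $\R^{n,0}$, so no negative weight or negative $A$-index can be introduced through the long exact sequence.

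An alternative to the cell-by-cell inductive analysis would be to set up an $RO(C_2)$-graded cellular spectral sequence whose $E_2$-page is read off from the cell types of $X$ and then argue convergence to the claimed form directly; the heart of the argument, however, is the same extension analysis packaged somewhat differently. Either way, the positivity of the shifts is automatic from the fact that all cells are trivially suspended, and finiteness of $X$ guarantees that only finitely many summands arise, which completes the induction.
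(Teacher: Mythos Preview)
The paper does not prove this theorem; it is quoted from \cite{M1} and used as a black box, so there is no in-paper proof to compare against. Your inductive cell-by-cell strategy is indeed the one used in \cite{M1}, so at the level of outline you are on the right track.

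However, the extension step is where the real content lies, and your treatment of it has a genuine gap. The long exact sequence of the cofiber does not present $H^{*,*}(X)$ as an extension of $H^{*,*}(Y)$ by the cohomology of the new cell; it presents $H^{*,*}(X)$ as an extension of $\ker(d)$ by $\operatorname{coker}(d)$, where $d$ is the connecting homomorphism. These pieces need not themselves be direct sums of shifted $\M_2$'s and $A_n$'s. For example, a nonzero map $\Sigma^{a,b}\M_2 \to \Sigma^{n,0}\M_2$ with $b>0$ typically has non-free kernel and cokernel, and the resulting extension reassembles them into a single $\Sigma^{n,b'}\M_2$ with a \emph{different} weight $b'$---this is the Kronholm shift phenomenon of \cite{K1}, and it is exactly the subtlety your sketch elides. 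So your assertion that free-by-free extensions ``split outright'' misidentifies what is being extended: the summands themselves are not what appear in the short exact sequence.

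Relatedly, the positivity constraint $p_i \geq q_i \geq 0$ is not automatic from the cells having weight-zero suspension. Because the Kronholm shifts can move the weight of a free generator during the inductive step, one must argue separately that the weight stays in the allowed range; this is done in \cite{M1} by a careful analysis of which differentials can actually occur. Your proposal correctly names the skeleton of the argument but underestimates the work hidden in the phrase ``the extension itself stays within this class.''
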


When $X$ is free or has only one fixed point, we get a refined structure theorem that follows from the isomorphism discussed in Remark \ref{freeiso}.

\begin{corollary}\label{freestructure}
 If $X$ is a free finite $C_2$-CW complex, then 
	\[
	H^{*,*}(X;\underline{\Z/2}) \cong \oplus_j \Sigma^{m_j ,0} A_{n_j}.
	\]
Similarly if $X$ is a finite $C_2$-CW complex with exactly one fixed point, then
	\[
	\H^{*,*}(X;\underline{\Z/2}) \cong \oplus_j \Sigma^{m_j ,0} A_{n_j}.
	\]
\end{corollary}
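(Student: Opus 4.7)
The plan is to derive the corollary from Theorem \ref{structure}, which for any finite $C_2$-CW complex $X$ already provides a decomposition
\[
H^{*,*}(X;\underline{\Z/2}) \cong (\oplus_i \Sigma^{p_i,q_i}\M_2)\oplus(\oplus_j \Sigma^{m_j,0}A_{n_j}),
\]
and then to rule out every $\M_2$-summand (except, in the second case, the single one contributed by the unit at the basepoint). My strategy in both cases is to show that the relevant cohomology has $\tau$ acting invertibly, which is incompatible with any $\M_2$-summand: the generator of $\Sigma^{p,q}\M_2$ has no preimage under $\tau$, since $H^{p,q-1}(pt;\underline{\Z/2})$ cannot supply one (equivalently, the class $\theta\in H^{0,-2}(pt;\underline{\Z/2})$ exhibits nontrivial $\tau$-torsion in $\M_2$).

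For free $X$ this step is immediate from Remark \ref{freeiso}, which presents $H^{*,*}(X;\underline{\Z/2})$ as a module over $\Z/2[\rho,\tau^{\pm 1}]$; so $\tau$ automatically acts invertibly and only $A_n$-summands can appear, giving $H^{*,*}(X;\underline{\Z/2})\cong \oplus_j\Sigma^{m_j,0}A_{n_j}$. For $X$ with a unique fixed point $x_0$, I would take $x_0$ as basepoint and use the retraction $X\to\{x_0\}$ to split $H^{*,*}(X)\cong \tilde{H}^{*,*}(X)\oplus\M_2$, reducing to showing $\tilde{H}^{*,*}(X)$ is $\tau$-invertible. To achieve this, choose a $C_2$-contractible closed neighborhood $\overline{U}$ of $x_0$ (its closed star after a suitable subdivision), let $Y$ be the complementary sub-$C_2$-CW complex, and let $L=\overline{U}\cap Y$ be the link; since $x_0$ is the only fixed point, both $Y$ and $L$ are free. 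Collapsing the contractible $\overline{U}$ yields $\tilde{H}^{*,*}(X)\cong \tilde{H}^{*,*}(Y/L)$, and the free case identifies $H^{*,*}(Y)$ and $H^{*,*}(L)$ as $\tau$-invertible modules.

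A five-lemma comparison of the long exact sequence of $(Y,L)$ with its $\tau$-translate then forces $\tau$ to act invertibly on $\tilde{H}^{*,*}(Y/L)$, and reapplying Theorem \ref{structure} together with the $\tau$-invertibility argument eliminates any remaining $\M_2$-summands, completing the proof. The main obstacle I anticipate is the topological setup in the one-fixed-point case, namely producing $\overline{U}$, $Y$, and $L$ as honest $C_2$-subcomplexes satisfying the required contractibility and freeness; for finite $C_2$-CW complexes this is routine after a single barycentric subdivision, but the bookkeeping of separating fixed from free cells is the most delicate point in the argument.
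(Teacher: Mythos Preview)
Your argument is correct and matches the paper's approach: the paper simply records that the corollary follows from the isomorphism in Remark~\ref{freeiso}, and your proof makes this explicit by using that isomorphism to establish $\tau$-invertibility and thereby exclude any $\M_2$-summands from the decomposition of Theorem~\ref{structure}. Your star/link reduction in the one-fixed-point case is a valid way to reduce to the free situation and fill in the details the paper leaves implicit.
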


Lastly we record a corollary of Theorem \ref{structure} that will be helpful later on.
\begin{lemma}\label{tauiso}
Let $X$ be a finite $C_2$-CW complex. For $k> 0$, action by $\tau^k$ gives an isomorphism $\tau^k:H^{f,g}(X)\to H^{f,g+k}(X)$ if $g\geq f$.  
\end{lemma}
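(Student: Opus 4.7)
The plan is to invoke the structure theorem (Theorem \ref{structure}) to reduce the claim to checking $\tau^k$-multiplication on each summand of the canonical $\M_2$-module decomposition of $H^{*,*}(X;\underline{\Z/2})$. Since $\tau\in\M_2$ acts via an $\M_2$-module endomorphism, it respects any such direct sum decomposition, so it suffices to verify the isomorphism statement separately on summands of the form $\Sigma^{p_i,q_i}\M_2$ (with $p_i\geq q_i\geq 0$) and $\Sigma^{m_j,0}A_{n_j}$ (with $m_j\geq 0$).

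For a summand of the form $\Sigma^{m_j,0}A_{n_j}$, the action of $\tau$ is always invertible, since $A_n\cong \tau^{-1}\M_2/(\rho^{n+1})$. Hence $\tau^k$ is an isomorphism in every bidegree on these summands, and the hypothesis $g\geq f$ is not needed at all.

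The bulk of the argument is on a free summand $\Sigma^{p,q}\M_2$ with $p\geq q\geq 0$. Here $\tau^k:H^{f,g}\to H^{f,g+k}$ corresponds to multiplication by $\tau^k$ from bidegree $(f-p,\,g-q)$ to $(f-p,\,g-q+k)$ in $\M_2$. The key observation is that the hypothesis $g\geq f$ combined with $p\geq q$ yields $g-q\geq f-p$. I would then split into two subcases. If $f\geq p$, then $(f-p,g-q)$ and $(f-p,g-q+k)$ both lie in the top cone of $\M_2$, which is the polynomial ring $\Z/2[\rho,\tau]$; here $\tau^k$ is manifestly an isomorphism sending $\rho^{f-p}\tau^{g-q-(f-p)}$ to $\rho^{f-p}\tau^{g-q+k-(f-p)}$. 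If instead $f<p$, then $(f-p,g-q)$ fails both cone conditions: top cone requires $f-p\geq 0$, and bottom cone requires $g-q\leq (f-p)-2$, which contradicts $g-q\geq f-p$. The same contradiction shows $(f-p,g-q+k)$ is also out of both cones, so both source and target vanish and $\tau^k$ is trivially an isomorphism.

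The main obstacle is not conceptual but bookkeeping: one must carefully track the top-cone and bottom-cone conditions describing the nonvanishing bidegrees of $\M_2$ and confirm that the shifted hypothesis $g-f\geq 0$ together with the constraint $p\geq q$ forces $(f-p,g-q)$ to land either inside the polynomial top cone (where $\tau$ is a nonzerodivisor) or strictly outside both cones (where the group is zero). Once this dichotomy is verified, the lemma follows immediately by summing over the decomposition.
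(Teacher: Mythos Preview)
Your proposal is correct and follows exactly the same approach as the paper's proof: reduce via the structure theorem (Theorem~\ref{structure}) to the two summand types $\Sigma^{p,q}\M_2$ with $p\geq q\geq 0$ and $\Sigma^{s,0}A_r$, and check each by hand. The paper's proof simply says ``by inspection this holds'' for both summand types, whereas you have carefully spelled out the top-cone/bottom-cone dichotomy for the free summands; your case analysis is accurate and is precisely the inspection the paper is alluding to.
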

\begin{proof} 
By inspection this holds for modules of the form $\Sigma^{s,0}A_r$ and $\Sigma^{p,q} \M_2$ where $p\geq q$. The statement then follows from Theorem \ref{structure}.
\end{proof}

\section{$C_2$-vector bundles and Thom isomorphism theorems}
\label{ch:eqvb}
In this section, we provide some background on $C_2$-vector bundles and then prove the Thom isomorphism theorems given as Theorem \ref{intro1} and Theorem \ref{intro2} in the introduction. Another approach to the Thom isomorphism is given in \cite{CW} and uses a grading system larger than the usual $RO(G)$-grading. Presumably there is a connection between the two approaches, but we haven't investigated this.

We begin by reviewing $C_2$-vector bundles. The following can be found in \cite[Section 1]{Se}.

\begin{definition} 
Let $X$ be a $C_2$-space. A \textbf{$\mathbf{C_2}$-vector bundle} over $X$ is the data of a nonequivariant vector bundle $\pi:E\to X$ such that $E$ is a $C_2$-space. Furthermore, $C_2$ should act on $E$ via vector bundle maps over the action of $C_2$ on $X$. Explicitly, the following diagram should commute where $\sigma$ denotes the action of $C_2$,
\begin{center}
	\begin{tikzcd} 
	E \arrow[r,"\sigma"] \arrow[d,"\pi"]&E\arrow[d,"\pi"]\\
	X\arrow[r,"\sigma"]&X
	\end{tikzcd}
\end{center}
and for each $x\in X$ the restriction of the action to the fibers
\begin{center}
	\begin{tikzcd} E_x\arrow[r,"\sigma"]&E_{\sigma x}
	\end{tikzcd}
\end{center}
should be a linear map.
\end{definition}

Many of the constructions for nonequivariant bundles exist for $C_2$-vector bundles. For example, if the base space is paracompact and Hausdorff, then any $C_2$-vector bundle can be given a $C_2$-invariant Euclidean metric that allows us to define the unit disk bundle and the unit sphere bundle, which in turn allows us to define the Thom space. Also, given an equivariant map $f:Y\to X$ and a vector bundle $E\to X$ we can form the pullback bundle $f^*E\to Y$. When working with these pullback bundles, the following important fact is still true; see \cite[Proposition 1.3]{Se} for a proof.

\begin{lemma}\label{homotopyinv} 
Let $X$ and $Y$ be $C_2$-CW complexes. Suppose $f,g:Y\to X$ are equivariantly homotopic and $E\to X$ is a $C_2$-vector bundle. Then $f^*E\cong g^*E$ as $C_2$-vector bundles over $Y$. 
\end{lemma}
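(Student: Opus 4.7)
The plan is to adapt the classical nonequivariant proof of homotopy invariance of pullback bundles, carefully tracking the $C_2$-action. Let $H\colon Y \times I \to X$ be an equivariant homotopy from $f$ to $g$, where $I = [0,1]$ carries the trivial $C_2$-action so that $H_0 = f$ and $H_1 = g$. Form the pullback $C_2$-vector bundle $F = H^*E \to Y \times I$; its restrictions to $Y \times \{0\}$ and $Y \times \{1\}$ are canonically $f^*E$ and $g^*E$, so it suffices to produce a $C_2$-bundle isomorphism $F \cong p^*F_0$ over $Y \times I$, where $F_0 = F|_{Y \times \{0\}}$ and $p\colon Y \times I \to Y$ is the equivariant projection.

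I would construct this isomorphism by a local-to-global argument. The first step is to choose a $C_2$-invariant open cover $\{U_\alpha\}$ of $Y$ together with equivariant trivializations of $F$ over each $U_\alpha \times I$. For an invariant set $U_\alpha$ containing a fixed point, equivariant local triviality of $C_2$-vector bundles supplies a trivialization over $U_\alpha \times \{0\}$, which extends to $U_\alpha \times I$ after shrinking $U_\alpha$ by a compactness argument along $I$. For an invariant set containing a free orbit, one chooses a slice $V$ disjoint from $\sigma V$, trivializes $F$ nonequivariantly on $V \times I$, and propagates this trivialization by the $C_2$-action to $\sigma V \times I$, giving an equivariant trivialization on $(V \sqcup \sigma V) \times I$. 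Each such trivialization yields an equivariant bundle map $\psi_\alpha\colon F|_{U_\alpha \times I} \to F_0|_{U_\alpha}$ covering $p$ which is the identity along $U_\alpha \times \{0\}$.

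Next, since $Y$ is a paracompact $C_2$-CW complex, it admits a $C_2$-invariant partition of unity $\{\lambda_\alpha\}$ subordinate to a locally finite refinement of $\{U_\alpha\}$. Blending the local maps by $\psi = \sum_\alpha \lambda_\alpha \psi_\alpha$ (which is well-defined because each $\psi_\alpha$ lands in the linear fibers of $F_0$) produces a global equivariant bundle map $F \to F_0$ covering $p$; equivariance is automatic from $C_2$-invariance of the $\lambda_\alpha$ and equivariance of the $\psi_\alpha$. The map $\psi$ restricts to the identity on $Y \times \{0\}$, hence is a fiberwise isomorphism in a neighborhood of $Y \times \{0\}$; to extend this to all of $Y \times I$, one iterates the construction slab by slab on $Y \times [t_i, t_{i+1}]$, at each stage using the identity-on-base-slice property to guarantee invertibility. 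Equivalently, one may take an equivariant connection on $F$ (obtained by averaging any connection over $C_2$) and use its parallel transport along the paths $t \mapsto (y,t)$; equivariance of the connection makes parallel transport automatically equivariant, and it furnishes a global isomorphism $F \cong p^*F_0$. Restricting to $Y \times \{1\}$ then yields $f^*E \cong g^*E$.

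The main obstacle is establishing the local equivariant trivializations of $F$ over sets of the form $U_\alpha \times I$. This relies on equivariant local triviality of $C_2$-vector bundles, which in turn depends on the representation type of the fiber over each component of the fixed set—the same structural input that is used repeatedly elsewhere in the paper. Once these local trivializations are secured, the remainder of the argument is a direct transcription of the nonequivariant proof, with equivariance preserved at each step by working throughout with $C_2$-invariant covers, $C_2$-invariant partitions of unity, and equivariant trivializations.
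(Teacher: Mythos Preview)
Your argument is essentially correct, but the paper takes a much shorter route: it does not prove this lemma at all and simply refers the reader to \cite[Proposition 1.3]{Se} (Segal's \emph{Equivariant $K$-Theory}) for a proof. So there is nothing to compare at the level of mathematical ideas---the paper treats this as a known foundational fact about $G$-vector bundles, valid for any compact group $G$, and imports it wholesale.

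What you have written is a reasonable sketch of how one would actually prove the result by equivariantizing the classical argument. A few places deserve tightening if you want it to stand as a complete proof: the step where you extend an equivariant trivialization from $U_\alpha \times \{0\}$ to $U_\alpha \times I$ ``by a compactness argument along $I$'' hides real work (one typically needs a tube lemma plus the fact that trivializations over $U \times [a,b]$ and $U \times [b,c]$ can be matched equivariantly along $U \times \{b\}$), and the blended map $\psi = \sum_\alpha \lambda_\alpha \psi_\alpha$ is not obviously a fiberwise isomorphism away from $t=0$, which is why you need the slab-by-slab iteration---you acknowledge this, but the iteration itself should be stated more carefully. The equivariant-connection alternative you mention is cleaner and avoids these issues entirely; averaging a connection over the finite group $C_2$ is unproblematic and parallel transport along $\{y\}\times I$ gives the desired isomorphism directly. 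Either way, your approach buys a self-contained argument, while the paper's citation buys brevity and avoids reinventing a standard wheel.
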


The following lemma follows immediately from the above.

\begin{lemma}\label{constantrep} 
Let $E\to X$ be a finite dimensional $C_2$-vector bundle over a $C_2$-CW complex. Suppose $x,y$ are two fixed points contained in the same connected component of $X^{C_2}$. Then the fibers $E_x$ and $E_{y}$ are isomorphic as $C_2$-representations. 
\end{lemma}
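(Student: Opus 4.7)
The plan is to deduce this directly from the homotopy invariance of pullbacks (Lemma \ref{homotopyinv}) by exhibiting $x$ and $y$ as the endpoints of an equivariant homotopy of maps $pt \to X$. The key observation is that because $x$ and $y$ lie in the same connected component of $X^{C_2}$ and $X^{C_2}$ is a CW subcomplex (hence locally path connected), there is a path $\gamma \colon [0,1] \to X^{C_2}$ with $\gamma(0) = x$ and $\gamma(1) = y$.

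First I would view the inclusions of the points $x,y$ as equivariant maps $i_x, i_y \colon pt \to X$, where $pt$ carries the trivial $C_2$-action. Next I would define $H \colon pt \times [0,1] \to X$ by $H(*,t) = \gamma(t)$, and note that since $\gamma$ lands in the fixed set, $H$ is automatically $C_2$-equivariant (both source and image points are fixed by $C_2$). Thus $i_x$ and $i_y$ are equivariantly homotopic.

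By Lemma \ref{homotopyinv}, the pullback bundles $i_x^*E$ and $i_y^*E$ are isomorphic as $C_2$-vector bundles over $pt$. A $C_2$-vector bundle over $pt$ is by definition nothing but a $C_2$-representation, and under this identification $i_x^*E$ is precisely the fiber $E_x$ equipped with its $C_2$-representation structure (and similarly for $y$). Hence $E_x \cong E_y$ as $C_2$-representations.

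There is no real obstacle here beyond recording the identification of a $C_2$-vector bundle over $pt$ with a $C_2$-representation; the content of the lemma is entirely absorbed by Lemma \ref{homotopyinv}.
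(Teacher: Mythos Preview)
Your proposal is correct and is exactly the argument the paper has in mind: the paper simply states that the lemma ``follows immediately from the above,'' i.e.\ from Lemma~\ref{homotopyinv}, and your write-up spells out precisely that deduction via a path in $X^{C_2}$ viewed as an equivariant homotopy between $i_x$ and $i_y$.
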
 

While many of the constructions and basic lemmas carry over from nonequivariant vector bundle theory, issues arise when we start considering cohomology. In particular, there is no direct analog of the Thom isomorphism theorem in Bredon cohomology that holds for general vector bundles in $\underline{\Z/2}$-coefficients, as seen in the following example. 
\begin{example}\label{mobiusovers11} 
Let $E\to S^{1,1}$ be the nontrivial one-dimensional bundle over $S^{1,1}$. An illustration of the disk bundle is shown below. As usual, the fixed set is shown in {\textbf{\color{blue}{blue}}}, while conjugate points are indicated by matching symbols.
\begin{figure}[ht]
	\includegraphics[scale=0.9]{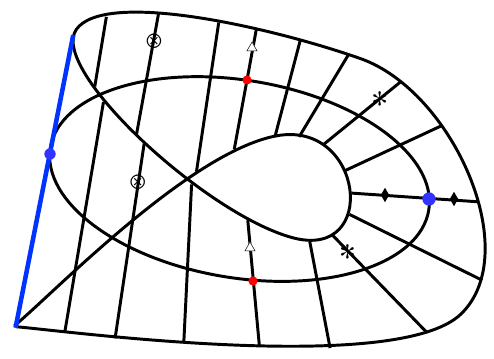}
	\caption{The M\"obuis bundle over $S^{1,1}$.}
\end{figure}

In this example, there are two components of the fixed set of the base space, both of which are isolated points. Over one point, the fiber is isomorphic to the $C_2$-representation $\R^{1,0}$; over the other point, the fiber is isomorphic to the $C_2$-representation $\R^{1,1}$. We have 
	\[
	H^{*,*}(E,E-0)\cong H^{*,*}(DE,SE)\cong \H^{*,*}(DE/SE)
	\]
where $DE$ and $SE$ are the unit disk and unit sphere bundle, respectively. Now $DE/SE$ is a familiar space: the underlying space is the projective plane, so in particular, it is a $C_2$-surface with exactly one fixed circle and one fixed point. The cohomology is given by Theorem \ref{nonfreeanswer} to be
	\[
	\H^{*,*}(DE/SE)\cong \Sigma^{1,1}\M_2\oplus \Sigma^{2,1}\M_2.
	\]
On the other hand, the cohomology of the base space is given by the suspension isomorphism to be 
	\[
	H^{*,*}(S^{1,1})\cong \M_2 \oplus \Sigma^{1,1}\M_2.
	\]
We see the cohomology of the Thom space is not a shift of the cohomology of the base space, but there are still similarities. There are the same number of free summands, and both summands are shifted by one topological dimension. There is also a unique class in $H^{*,*}(E,E-0)$ that generates a free summand and has topological dimension equal to the dimension of the bundle. Note the weight of this class corresponds to the maximum weight representation over the fixed set. This class will be the Thom class described in Theorem \ref{thomnonfree}. 
\end{example}

\subsection{The Thom isomorphism in bigraded Borel cohomology}\label{sec:borel} Before considering Bredon cohomology, we first analyze the Borel equivariant cohomology of $C_2$-equivariant Thom spaces. This analysis will prove useful in our proof of the Thom isomorphism in Bredon cohomology.

\begin{notation} In the remainder of this section, all coefficients will be understood to be $\underline{\Z/2}$. Given a vector bundle $E$ over a space $X$, we will write $E'$ for the complement of the zero section and $X^E$ for the Thom space.
\end{notation}

We begin by recalling the definition of Borel cohomology. As usual $EC_2$ denotes a contractible $C_2$-space with a free $C_2$-action. For example, one can take $S^{\infty}$ with the antipodal action.

\begin{definition}
Let $X$ be a $C_2$-space. The {\bf{bigraded Borel cohomology}} of $X$ is given by $H^{p,q}_{Bor}(X):=H^{p,q}(EC_2\times X)$. The {\bf{reduced bigraded Borel cohomology}} of a pointed $C_2$-space is given by $\H^{p,q}_{Bor}(X):=\H^{p,q}(EC_{2+}\wedge X)$
\end{definition}

The Borel cohomology of the orbits is given in the lemma below. The computation is straight-forward from the definition, so we leave the proof to the reader. 

\begin{lemma} As a ring $H^{*,*}_{Bor}(pt)\cong \Z/2[\rho,\tau^{\pm 1}]$ where $|\rho|=(1,1)$ and $|\tau|=(0,1)$. As a $\Z/2[\rho, \tau^{\pm 1}]$-algebra, $H^{*,*}_{Bor}(C_2)\cong \Z/2[\tau^{\pm 1}]$ with $\rho$ acting as zero.
\end{lemma}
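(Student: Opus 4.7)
My plan is to realize $EC_2$ concretely as the infinite antipodal sphere $S^\infty_a = \varinjlim_n S^n_a$ and reduce both computations to the cohomology of finite antipodal spheres (recorded just above) together with Lemma \ref{times}, using Milnor $\lim^1$ sequences to pass to the colimit in $n$.

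For $H^{*,*}_{Bor}(pt) = H^{*,*}(S^\infty_a)$, I would invoke the Milnor short exact sequence
\[
0 \to {\lim_n}^1 H^{p-1,q}(S^n_a) \to H^{p,q}(S^\infty_a) \to \lim_n H^{p,q}(S^n_a) \to 0
\]
associated to the tower $\{H^{*,*}(S^n_a)\} = \{\tau^{-1}\M_2/(\rho^{n+1})\}$. The restriction maps induced by $S^n_a\hookrightarrow S^{n+1}_a$ are the evident surjections $\tau^{-1}\M_2/(\rho^{n+2}) \twoheadrightarrow \tau^{-1}\M_2/(\rho^{n+1})$, so the $\lim^1$ term vanishes, and in each fixed bidegree $(p,q)$ the tower stabilizes once $n$ is large enough: it is $\Z/2\cdot \rho^p \tau^{q-p}$ for $p\geq 0$ and zero for $p<0$. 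This gives the additive structure of $\Z/2[\rho,\tau^{\pm 1}]$; the ring structure is transported from the compatible ring structure at each finite stage, with $\rho$ and $\tau$ surviving to the limit and $\tau$ still invertible.

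For $H^{*,*}_{Bor}(C_2) = H^{*,*}(EC_2 \times C_2)$ with the diagonal action, I would first use the observation that for any $C_2$-space $X$ the map $\phi(x,g) = (g \cdot x, g)$ is a $C_2$-equivariant homeomorphism from $X \times C_2$ with action only on the second factor onto $X \times C_2$ with the diagonal action. Applied at each finite stage with $X = S^n_a$, this allows Lemma \ref{times} to produce
\[
H^{*,*}(S^n \times C_2) \cong \tau^{-1}\M_2/(\rho) \otimes_{\Z/2} H^{*}_{sing}(S^n;\Z/2),
\]
where the original $C_2$-action on $S^n_a$ is now irrelevant. Another $\lim^1$ vanishing (the inclusions $S^n \hookrightarrow S^{n+1}$ induce surjections in singular cohomology in every fixed topological degree) gives the limit $\tau^{-1}\M_2/(\rho) \otimes H^*_{sing}(S^\infty; \Z/2) \cong \Z/2[\tau^{\pm 1}]$. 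The $\Z/2[\rho, \tau^{\pm 1}]$-algebra structure is induced from the projection $C_2 \to pt$, and $\rho$ must act as zero since $H^{1,1}_{Bor}(C_2) = 0$ in the computed answer. The only real technical input in either computation is the vanishing of the relevant $\lim^1$ terms, and this is immediate from surjectivity of the restriction maps at finite stages; after that, the calculation reduces mechanically to inputs already established.
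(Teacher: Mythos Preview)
The paper does not actually give a proof of this lemma; it says the computation is straightforward from the definition and leaves it to the reader. So there is no paper-proof to compare against, and your approach is a perfectly reasonable way to fill in the details.

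Your argument for $H^{*,*}_{Bor}(pt)$ is correct: realizing $EC_2$ as $S^\infty_a$, using the known $H^{*,*}(S^n_a)\cong \tau^{-1}\M_2/(\rho^{n+1})$, and passing to the limit via Milnor's sequence works exactly as you describe.

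For $H^{*,*}_{Bor}(C_2)$ there is one small slip and one unnecessary detour. The slip: your claim that the inclusions $S^n\hookrightarrow S^{n+1}$ induce surjections in singular cohomology in every fixed degree is not quite true, since $H^p(S^{p+1})\to H^p(S^p)$ is the map $0\to\Z/2$. However, in each fixed degree the tower is eventually zero, so Mittag--Leffler still holds and $\lim^1$ vanishes; only your stated reason needs adjusting. The detour: the entire finite-stage-plus-limit argument can be avoided by observing that the projection $EC_2\times C_2\to C_2$ is an equivariant weak equivalence (both sides are free and the underlying map is a homotopy equivalence since $EC_2$ is contractible), so $H^{*,*}_{Bor}(C_2)\cong H^{*,*}(C_2)=A_0\cong\Z/2[\tau^{\pm 1}]$ directly from the computation already recorded in the paper. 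This is presumably closer to what the author had in mind by ``straightforward from the definition.''
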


\begin{rmk}
The quotient lemma implies \[H^{p,0}(EC_2\times X)\cong H^{p}_{sing}((EC_2\times X)/C_2)= H^{p}_{Bor}(X),\] so the bigraded Borel cohomology is indeed an extension of the usual Borel cohomology when working with constant coefficients. In fact, the bigraded Borel cohomology is entirely determined by the singly-graded Borel cohomology, as explained in the lemma below.
\end{rmk}
\begin{lemma}\label{boriso} Let $X$ be a finite $C_2$-CW complex and let $u$ denote the polynomial generator of $H^{*}_{Bor}(pt)\cong H^{*}_{sing}(\R P^\infty)\cong \Z/2[u]$. Then
	\[
	H^{*,*}_{Bor}(X)\cong H^{*}_{Bor}(X) \underset{\Z/2[u]}{\otimes} \Z/2[\rho, \tau^{\pm 1}]
	\]
where the module structure on the right factor is given by $u\mapsto \tau^{-1}\rho$.
\end{lemma}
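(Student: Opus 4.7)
The plan is to exploit that $EC_2\times X$ is a free $C_2$-space and reduce the bigraded Borel cohomology to its weight-zero slice together with the action of $\tau$. Throughout, write $Y=EC_2\times X$, which is a (non-finite) free $C_2$-CW complex whose orbit space is $Y/C_2 = EC_2\times_{C_2} X$, so that by definition $H^{*}_{sing}(Y/C_2)=H^{*}_{Bor}(X)$.

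First I would establish two structural facts about $H^{*,*}(Y)$. (a) Multiplication by $\tau$ is an isomorphism on $H^{*,*}(Y)$. For each finite free subcomplex $E_n\times X\subset Y$ (with $E_n=S^n_a$ an antipodal-sphere approximation to $EC_2$), Corollary \ref{freestructure} gives $H^{*,*}(E_n\times X)\cong\bigoplus_j\Sigma^{m_j,0}A_{n_j}$, and on every $A_{n_j}=\tau^{-1}\M_2/(\rho^{n_j+1})$ the action of $\tau$ is invertible. (b) The forgetful/quotient identification $H^{p,0}(E_n\times X)\cong H^{p}_{sing}((E_n\times X)/C_2)$ holds in weight zero.

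Next I would pass to $Y$. The filtration $E_n\times X\hookrightarrow E_{n+1}\times X\hookrightarrow\cdots$ has colimit $Y$, and in any fixed bidegree $(p,q)$ the restriction maps stabilize to isomorphisms once $n$ is large (because the $A_{n_j}$-summands contributing in that bidegree are determined by the finitely many cells of $X$ and a bounded range of cells of $E_n$). Hence the Milnor $\lim^1$ vanishes, and both (a) and (b) pass to $Y$: $\tau$ acts invertibly on $H^{*,*}_{Bor}(X)$ and $H^{p,0}_{Bor}(X)\cong H^{p}_{Bor}(X)$.

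Combining (a) and (b), multiplication by $\tau^q$ gives an isomorphism $H^{p,0}_{Bor}(X)\xrightarrow{\cong} H^{p,q}_{Bor}(X)$ for every $q\in\Z$, so as abelian groups $H^{*,*}_{Bor}(X)\cong H^{*}_{Bor}(X)[\tau^{\pm 1}]$. To upgrade this to an isomorphism of $\Z/2[\rho,\tau^{\pm 1}]$-modules I would verify that $\rho$ acts as $u\tau$: by naturality applied to $X\to pt$, it suffices to check this in $H^{*,*}_{Bor}(pt)\cong\Z/2[\rho,\tau^{\pm 1}]$, where by the forgetful long exact sequence $\rho\tau^{-1}\in H^{1,0}(pt)$ maps to the polynomial generator $u\in H^{1}_{Bor}(pt)$. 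Rewriting $\Z/2[\rho,\tau^{\pm 1}]=\Z/2[u][\tau^{\pm 1}]$ with $u=\rho\tau^{-1}$ then identifies $H^{*}_{Bor}(X)[\tau^{\pm 1}]$ with $H^{*}_{Bor}(X)\otimes_{\Z/2[u]}\Z/2[\rho,\tau^{\pm 1}]$, completing the proof.

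The main obstacle will be the $\lim/\lim^1$ argument needed to transfer the free structure theorem from the finite approximations $E_n\times X$ to $Y=EC_2\times X$. The vanishing of $\lim^1$ in each bidegree is the key input; once that is in hand, the remaining identifications are formal, aided by the fact that $\Z/2[\rho,\tau^{\pm 1}]$ is free (hence flat) over $\Z/2[u]$ under the substitution $u\mapsto\rho\tau^{-1}$, so the tensor product on the right-hand side also commutes with the relevant inverse limit.
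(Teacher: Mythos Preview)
Your argument is correct, but it takes a different route from the paper's. The paper argues purely formally: since $\Z/2[\rho,\tau^{\pm 1}]\cong\Z/2[u][\tau^{\pm 1}]$ is flat over $\Z/2[u]$, the functor $H^{*}_{Bor}(-)\otimes_{\Z/2[u]}\Z/2[\rho,\tau^{\pm 1}]$ is itself a bigraded cohomology theory on finite $C_2$-CW complexes; one then writes down the natural map $\alpha\otimes\rho^j\tau^k\mapsto\rho^j\tau^k\alpha$ into $H^{*,*}_{Bor}(-)$ and checks it on the two orbits $pt$ and $C_2$, concluding by the usual five-lemma induction on cells. Your approach instead works directly with the target, using the free structure theorem on the finite approximations $S^n_a\times X$ to force $\tau$-invertibility and then passing to the limit. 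This is more hands-on but buys less: you have to manage the $\lim^1$ step and then reconstruct the $\rho$-action by hand, whereas the paper's comparison-of-theories argument handles both the module structure and the naturality at once with no limit issues. One caution: in this paper Corollary~\ref{freestructure} is justified via Remark~\ref{freeiso}, which in turn is explained using the very lemma you are proving, so to avoid circularity you should instead deduce the absence of free $\M_2$-summands for $S^n_a\times X$ directly from Theorem~\ref{structure} together with the fact that $\theta$ acts as zero on free spaces.
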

\begin{proof}
Observe $\Z/2[\rho, \tau^{\pm 1}]\cong \Z/2[u][\tau^{\pm 1}]$ is a flat $\Z/2[u]$-module, so \[H^{*}_{Bor}(-) \underset{\Z/2[u]}{\otimes} \Z/2[\rho, \tau^{\pm 1}]\] is a bigraded cohomology theory for finite complexes. Now $H^{*}_{Bor}(X)=H^{*,0}_{Bor}(X)$ and we have a natural map
	\[
	H^{*}_{Bor}(X) \underset{\Z/2[u]}{\otimes} \Z/2[\rho, \tau^{\pm 1}] \to H^{*,*}_{Bor}(X)
	\]
given by $\alpha\otimes \rho^j\tau^k \mapsto \rho^j\tau^k\alpha$. One can check this gives an isomorphism when $X=pt$ and $X=C_2$, and thus the above is an isomorphism whenever $X$ is a finite $C_2$-CW complex. 
\end{proof}
\begin{rmk}\label{freeboriso} If $X$ is free, then the projection map $EC_2\times X \to X$ is a weak equivalence of $C_2$-spaces. Hence the Bredon cohomology and the Borel cohomology are isomorphic. Since $H^{*}_{Bor}(X)\cong H^{*}_{sing}(X/C_2)$ in the free case, this gives the isomorphism mentioned in Remark \ref{freeiso}.
\end{rmk}

We now prove there is a Thom isomorphism theorem for Borel cohomology. Let $\pi:E\to X$ be an $n$-dimensional $C_2$-vector bundle. In the theorem below $E_x$ denotes the fiber $\pi^{-1}(x)$ for a fixed point $x$, and $E_{y,\sigma y}$ denotes the fiber $\pi^{-1}(\{y,\sigma y\})$ for conjugate points $y, \sigma y$. Note $E_x\cong \R^{n,q}$ for some weight $q$ while $E_{y,\sigma y} \cong C_2\times \R^n$. On cohomology
	\[
	H^{*,*}_{Bor}(E_x,E_x')\cong \H^{*,*}_{Bor}(S^{n,q})\cong \Sigma^{n,q}\Z/2[\rho,\tau^{\pm 1}], \quad \text{ while} 
	\]
	\[
	H^{*,*}_{Bor}(E_{y,\sigma y},E_{y,\sigma y}') \cong \H^{*,*}_{Bor}(C_{2+}\wedge S^{n})\cong \Sigma^{n,0}\Z/2[\tau^{\pm 1}].
	\]
\begin{theorem}\label{borthom} Let $X$ be a finite $C_2$-CW complex and let $\pi:E\to X$ be an $n$-dimensional $C_2$-vector bundle. Then there is a unique class $u_E^{Bor}\in H^{n,0}_{Bor}(E,E')$ that satisfies the following properties:
\begin{enumerate}
\item[(i)] The class $u_E$ restricts to the unique nonzero class over both free and nonfree fibers. That is, if $x\in X^{C_2}$ then under the inclusion $i:E_x\to E$ the class $i^*(u_E)$ is the nonzero element in $H^{n,0}_{Bor}(E_x,E'_x)$. Similarly if $y\in X-X^{C_2}$ then under the inclusion $j:E_{y,\sigma y}\to E$, the class $j^*(u_E)$ is the unique nonzero class in $H^{n,0}_{Bor}(E_{y,\sigma y},E_{y,\sigma y}')$;
\item[(ii)] The map $\pi^{*}(-) \smile u_E^{Bor}: \H^{*,*}_{Bor}(X_+)\to \H^{*+n,*}_{Bor}(X^E)$ is an isomorphism. 
 \end{enumerate}
\end{theorem}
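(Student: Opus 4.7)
The plan is to reduce the statement to the classical nonequivariant Thom isomorphism theorem with $\Z/2$ coefficients applied to the Borel construction of $\pi$, and then use Lemma \ref{boriso} to upgrade the singly-graded conclusion to the full bigraded statement. The crucial observation is that applying $EC_2\times_{C_2}(-)$ to $\pi\colon E\to X$ produces a genuine nonequivariant rank-$n$ real vector bundle $\pi_h\colon EC_2\times_{C_2} E\to EC_2\times_{C_2} X$, and since $H^{*,0}_{Bor}(-)$ is just singular $\Z/2$-cohomology of the Borel construction, weight-zero statements about the equivariant Thom space translate directly into classical statements about $\pi_h$.

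First, I would invoke the classical Thom isomorphism theorem for $\pi_h$ with $\Z/2$ coefficients. Every real vector bundle is $\Z/2$-orientable, so there is a unique class $u_E^{Bor}\in H^n_{sing}(EC_2\times_{C_2} E,\, EC_2\times_{C_2} E') = H^{n,0}_{Bor}(E,E')$ whose restriction to each (classical) fiber of $\pi_h$ generates the fiber's top cohomology, and cupping with this class gives an isomorphism $H^{*,0}_{Bor}(X_+)\to H^{*+n,0}_{Bor}(X^E)$, which is the weight-zero slice of the desired bigraded Thom isomorphism.

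Second, I would verify the equivariant restriction property (i). For $x\in X^{C_2}$, the inclusion $\{x\}\hookrightarrow X$ of $C_2$-spaces induces $BC_2\hookrightarrow EC_2\times_{C_2}X$ on Borel bases, and by naturality of the classical Thom class, the restriction of $u_E^{Bor}$ to $(E_x,E_x')$ is the classical Thom class of $EC_2\times_{C_2} E_x\to BC_2$. Using $E_x\cong \R^{n,q}$ together with the ring structure $H^{*,*}_{Bor}(pt)\cong \Z/2[\rho,\tau^{\pm 1}]$, one computes $H^{n,0}_{Bor}(E_x,E_x')\cong \Z/2$, and the classical Thom class necessarily hits the unique nonzero element. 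A parallel argument for a free orbit $\{y,\sigma y\}$ uses that $EC_2\times_{C_2} E_{y,\sigma y}\simeq EC_2\times \R^n$ is a trivial $\R^n$-bundle over a contractible base, so the Thom class again maps to the unique nonzero element of $H^{n,0}_{Bor}(E_{y,\sigma y},E_{y,\sigma y}')\cong \Z/2$. Uniqueness of $u_E^{Bor}$ then follows from classical uniqueness applied to $\pi_h$, since (i) forces fiber-by-fiber agreement with the classical Thom class.

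Finally, to obtain the full bigraded Thom isomorphism (ii), I would tensor the weight-zero statement with $\Z/2[\rho,\tau^{\pm 1}]$ over $\Z/2[u]$. Lemma \ref{boriso} expresses both $H^{*,*}_{Bor}(X_+)$ and $\H^{*,*}_{Bor}(X^E)$ as this tensor product applied to their weight-zero slices; the tensor product is exact because $\Z/2[\rho,\tau^{\pm 1}]$ is free (hence flat) over $\Z/2[u]$; and cup product with $u_E^{Bor}$ is $\Z/2[u]$-linear because $u$ is itself a Borel cohomology class. Hence the weight-zero isomorphism promotes to the bigraded one. The main technical point requiring care is the identification of the Borel construction of the pair $(E,E')$ with the pair underlying the Thom space of $\pi_h$ and the compatibility of the equivariant fiber restrictions with the classical ones; both are routine once written out carefully, and no genuinely new obstacle beyond classical Thom theory arises.
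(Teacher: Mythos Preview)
Your proposal is correct and follows essentially the same route as the paper: both reduce to the classical $\Z/2$-coefficient Thom isomorphism for the nonequivariant bundle $EC_2\times_{C_2}E\to EC_2\times_{C_2}X$, identify the resulting class with an element of $H^{n,0}_{Bor}(E,E')$ via the homeomorphism $EC_{2+}\wedge_{C_2}X^E\cong (EC_2\times_{C_2}X)^{EC_2\times_{C_2}E}$, and then promote the weight-zero isomorphism to all bigradings. The only cosmetic difference is that the paper handles this last step by noting that $\tau$ acts invertibly on Borel cohomology, whereas you phrase it as tensoring along the flat extension $\Z/2[u]\to\Z/2[\rho,\tau^{\pm 1}]$ via Lemma~\ref{boriso}; these are equivalent packagings of the same fact.
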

\begin{proof}
The singly graded Borel cohomology is defined using singular cohomology, so our strategy is to apply the singular cohomology Thom isomorphism theorem to get the corresponding theorem for Borel cohomology. We do this by making the following observation: if $Y$ is a free $C_2$-space and $E\to Y$ is a $C_2$-vector bundle, then $E/C_2 \to Y/C_2$ is also a vector bundle (this is discussed in the proof of \cite[Proposition 2.1]{Se}, for example). 

Consider the bundle $id\times \pi: EC_2\times E \to EC_2 \times X$. Applying the above observation, we have a vector bundle $id\times_{C_2} \pi: EC_2\times_{C_2} E \to EC_2\times_{C_2} X$. The usual Thom isomorphism theorem gives a class
	\[
	u\in \H^{n}_{sing}((EC_2\times_{C_2} X)^{EC_2\times_{C_2} E})
	\]
and the map
	\[
	(id\times \pi)^*(-)\smile u : \H^{j}_{sing}((EC_2\times_{C_2} X)_+) \to  \H^{n+j}_{sing}((EC_2\times_{C_2} X)^{EC_2\times_{C_2} E})
	\]
is an isomorphism. The domain is exactly $H^{j}_{Bor}(X)$ and for the codomain, there is a homeomorphism of spaces
	\[
	\phi:EC_{2+}\wedge_{C_2} X^E\to (EC_2\times_{C_2} X)^{EC_2\times_{C_2} E}
	\]
given by $y \wedge \alpha \mapsto (y,\alpha)$.
Let $u^{Bor}_E=\phi^*u\in \H^n_{sing}(EC_{2+}\wedge_{C_2} X^E)\cong \H^{n,0}_{Bor}(X^E)$. We have the following isomorphisms:
\begin{center}
\begin{tikzcd}[row sep=0.23cm]
 \H^{j}_{sing}(EC_{2+}\wedge_{C_2} X_+)\arrow[rrr, "(id\times \pi)^*(-)\smile u^{Bor}_E"]\arrow[d, "\rotatebox{90}{\(\sim\)}", equal]& & & \H^{j+n}_{sing}(EC_{2+}\wedge_{C_2} X^E)\arrow[d, "\rotatebox{90}{\(\sim\)}", equal]\\
  \H^{j,0}_{Bor}(X_+)\arrow[rrr, "\pi^*(-)\smile u^{Bor}_E"]&&& \H^{j+n,0}_{Bor}(X^E)
\end{tikzcd}
\end{center}
Since $\tau$ acts invertibly on the Borel cohomology, it follows that the bottom isomorphism holds for any weight, not just zero. Lastly the statements about restricting to fibers follow because these statements hold for $u$.
\end{proof}

\subsection{The Thom isomorphism in Bredon cohomology for trivial spaces} We next consider $C_2$-vector bundles where the base space has a trivial $C_2$-action. In this case, there is a Thom isomorphism theorem for Bredon cohomology that can be proven using the same methods as the singular Thom isomorphism theorem.

\begin{lemma}\label{trivthom}
Suppose $X$ is a finite, connected $C_2$-CW complex with a trivial $C_2$-action and $\pi:E\to X$ is a $C_2$-vector bundle. Suppose the fiber over some point $x\in X$ is isomorphic to $\R^{n,q}$. Then there is a unique class $u_E\in H^{n,q}(E,E')$ such that $u_E$ restricts to the nonzero class over fibers. Furthermore, the class $u_E$ satisfies the following properties:
\begin{enumerate}[label=(\roman*)]
\item The map $\phi_E:=\pi^*(-) \smile u_E : H^{*,*}(X) \to H^{*+n, *+q}(E,E')$ is an isomorphism; and
\item If $Y$ is another trivial $C_2$-CW complex and there is a map $f:Y\to X$, then $f^*u_E=u_{f^*E}$.
\end{enumerate} 
\end{lemma}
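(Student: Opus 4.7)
The plan is to adapt the classical Mayer--Vietoris induction proof of the Thom isomorphism theorem to this equivariant setting. First I would handle the base case $X = pt$: here $E \cong \R^{n,q}$, the pair $(E, E')$ has the same cohomology as the representation sphere, and the suspension isomorphism gives $H^{*,*}(E, E') \cong \Sigma^{n,q}\M_2$. The unique generator in bidegree $(n,q)$ is $u_E$, and cupping with it realizes the suspension isomorphism on $H^{*,*}(pt)$. Since $X$ is connected and entirely fixed, Lemma \ref{constantrep} forces every fiber to be isomorphic to $\R^{n,q}$, so a class that restricts to the generator on one fiber restricts to the generator on every fiber. Next I would treat a trivial bundle $E = X \times \R^{n,q}$: the Thom space is $X_+ \wedge S^{n,q}$, and the suspension isomorphism $\Sigma^{n,q} \colon \H^{*,*}(X_+) \to \H^{*+n,*+q}(X_+ \wedge S^{n,q})$ shows that $\phi_E$ is an isomorphism when $u_E$ is taken to be $\Sigma^{n,q}(1)$, with $u_E$ restricting to the fiber generator by naturality.

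For a general bundle $\pi \colon E \to X$, I would pick a finite trivializing open cover $X = U_1 \cup \cdots \cup U_r$ (compactness of $X$ ensures such a cover exists) and induct on $r$. The inductive step sets $A = U_1 \cup \cdots \cup U_{r-1}$ and $B = U_r$, and compares the Mayer--Vietoris long exact sequence for $X = A \cup B$ to the Mayer--Vietoris sequence for the pair $(E, E')$ decomposed over $A$ and $B$, linked by the Thom-cup maps $\phi$. By the inductive hypothesis, Thom classes and $\phi$-isomorphisms exist on $A$, $B$, and $A \cap B$; uniqueness on $A \cap B$ ensures that $u_{E|_A}$ and $u_{E|_B}$ agree after restriction, so by exactness of the MV sequence for $(E, E')$ they lift to a class $u_E \in H^{n,q}(E, E')$. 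The lift is unique because the preceding group $H^{n-1,q}(E|_{A \cap B}, E|_{A \cap B}')$ is identified with $H^{-1,0}(A \cap B)$ under the inductive Thom isomorphism, and the latter is zero by Lemma \ref{trivial} together with the shape of $\M_2$ in bidegrees with $q=0$ and $p<0$. The Five Lemma then upgrades $\phi_E$ itself to an isomorphism.

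Uniqueness of $u_E$ globally and naturality (ii) follow formally. Any alternative class differs from $u_E$ by something that pulls back to zero on every fiber, hence corresponds under $\phi_E$ to an element $\alpha \in H^{0,0}(X) \cong \Z/2$ (using Lemma \ref{trivial} together with connectedness of $X$) that vanishes at every point, forcing $\alpha = 0$. For naturality, given $f \colon Y \to X$ with $Y$ trivial, the class $f^*u_E$ restricts to the fiber generator on $f^*E$ since restriction commutes with pullback, so uniqueness applied to $f^*E$ gives $f^* u_E = u_{f^*E}$. The main obstacle is the gluing step in the induction: one must carefully verify that the vertical $\phi$ maps commute with the Mayer--Vietoris connecting homomorphisms and that the obstruction group $H^{n-1,q}(E|_{A \cap B}, E|_{A \cap B}')$ really does vanish under the inductive hypothesis. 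Once these compatibilities are in hand, the induction proceeds without further difficulty.
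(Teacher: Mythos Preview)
Your proposal is correct and follows essentially the same approach as the paper's proof: cover $X$ by finitely many trivializing opens (using compactness), handle the trivial-bundle case via the suspension isomorphism, then induct using Mayer--Vietoris and the five-lemma, with uniqueness and naturality deduced by restricting to fibers. The paper gives only a brief outline of this argument, whereas you have supplied more detail (in particular the vanishing of the obstruction group $H^{-1,0}(A\cap B)$ via Lemma~\ref{trivial}), but the strategy is the same.
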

\begin{proof}
The proof of this theorem is analogous to the proof of the usual Thom isomorphism for finite CW complexes, so we only provide an outline. Using compactness, we can cover $X$ with finitely many neighborhoods $U_1,\dots, U_k$ such that $E|_{U_i} \cong \R^{n,q}\times U_i$. For trivial bundles, the existence of a Thom class $u_{i} \in H^{*,*}(E|_{U_i} , E|_{U_i}')$ follows from the suspension isomorphism. We can now paste these classes together using the Mayer-Vietoris sequence, just as in the proof of the nonequivariant Thom isomorphism theorem. Property (i) then follows from the five-lemma, and property (ii) follows by restricting to fibers and using the uniqueness of the Thom class. 
\end{proof}

\subsection{The Thom isomorphism theorem for Bredon cohomology} We are now ready to prove the main theorems of this section. Our goal is to prove there exist Thom classes in Bredon cohomology for $C_2$-vector bundles over finite $C_2$-CW complexes and that these classes satisfy a list of properties. The two theorems are stated separately for nonfree versus free base spaces and are each broken into two parts. We begin with some setup for the nonfree case.

Let $X$ be a finite, nonfree $C_2$-CW complex and $E\to X$ be an $n$-dimensional $C_2$-vector bundle. Let $X_1, \dots, X_m$ denote the connected components of the fixed set $X^{C_2}$. By Lemma \ref{constantrep}, for each $X_i$ there is an integer $q_i$ such that for all $x\in X_i$, $E_x\cong \R^{n,q_i}$. Let $q=\max\{q_1,\dots q_m\}$. We now state the first part of the theorems.

\begin{theorem}[Thom isomorphism for nonfree base spaces, Part I]\label{thomnonfree} Let $X$, $\pi:E\to X$, $n$, $X_i$, $q_i$ and $q$ be defined as above and let $E'=E-0$. There exists a unique class $u_E\in H^{n,q}(E,E')$ such that the following holds:
\begin{enumerate}[label=(\roman*)]
	\item For every $i$ and $x\in X_i$, the class $u_E$ restricts to $\tau^{q-q_i}\alpha_x$ where $\alpha_x$ is the generator of $H^{*,*}(E_x,E_x')\cong \H^{*,*}(S^{n,q_i})\cong \Sigma^{n,q_i}\M_2$;
	\item For every $x\in X\setminus X^{C_2}$, the class $u_E$ restricts to  the unique nonzero class in bidegree (n,q) in $H^{*,*}(E_{x,\sigma x},E_{x,\sigma x}')\cong \H^{*,*}(S^{n,0}\wedge C_{2+})\cong \Sigma^{n,0}A_0$ where $E_{x,\sigma x}=\pi^{-1}(\{x,\sigma\})\cong C_2\times \R^n$;
	\item The map $\phi_E:=\pi^*(-)\smile u_E: H^{f,g}(X)\to H^{f+n,g+q}(E,E')$ is an isomorphism in bidegrees $(f,g)$ whenever $g\geq f$; and
	\item If in fact $E_x\cong E_y$ for all $x,y\in X^{C_2}$, then $\phi_E$ is an isomorphism in all bidegrees and $H^{*,*}(X)\cong H^{*+n,*+q}(E,E')$. 
\end{enumerate} 
\end{theorem}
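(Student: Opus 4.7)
The plan is to reduce the problem to the trivial bundle case on the fixed set (handled by Lemma \ref{trivthom}), then extend the Thom class across all of $X$ using the cofiber sequence of Thom spaces associated to the inclusion $F := X^{C_2} \hookrightarrow X$, and finally prove the isomorphism property by induction on a $C_2$-CW structure.

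First I would produce the candidate class over $F$. Applying Lemma \ref{trivthom} to each connected component $X_i$ yields a unique Thom class $v_i \in H^{n,q_i}(E|_{X_i}, E|_{X_i}')$, and setting $w := \sum_i \tau^{q-q_i} v_i$ gives a class in $H^{n,q}(E|_F, E|_F')$ that pins down property (i) at the level of the fixed set. To lift $w$ to a class $u_E \in H^{n,q}(E, E')$, I would use the long exact sequence of the triple $(E,\, E|_F \cup E',\, E')$, which identifies the obstruction and the indeterminacy with relative cohomology groups of the Thom space of $E|_{X\setminus F}$ modulo the complement of its zero section. Since $X \setminus F$ is a free $C_2$-space, this cohomology is captured by Lemma \ref{times}, Remark \ref{freeiso}, and Corollary \ref{freestructure}: it is a direct sum of shifted copies of $A_0$. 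A weight-and-topological-dimension count in bidegrees $(n,q)$ and $(n+1,q)$ should make both the obstruction and the indeterminacy vanish, yielding a unique extension $u_E$. Property (ii) then follows by comparing to the singular Thom class via the forgetful long exact sequence of Lemma \ref{forgetfulles}: over a free orbit the restriction of $u_E$ must forget to the singular Thom class of the fiber, which identifies it as the unique nonzero class in $\Sigma^{n,0}A_0$ living in bidegree $(n,q)$.

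For the isomorphism statement (iii), I would induct on a $C_2$-CW structure for $X$, using Mayer-Vietoris and the five-lemma. Attaching a fixed cell reduces by excision to Lemma \ref{trivthom}, while attaching a free cell reduces to bundles over orbits of type $C_2 \times D^k$, which are trivializable and can be handled directly using the cohomology of $C_2$ from Lemma \ref{times}. The range restriction $g \geq f$ enters precisely because the twist factor $\tau^{q-q_i}$ built into $w$ is only compatible with an isomorphism on cohomology in the region where $\tau$ acts invertibly, which by Lemma \ref{tauiso} is exactly $g \geq f$. For (iv), when all fiber weights $q_i$ coincide, no $\tau$-twist is needed in the construction of $w$, so the inductive argument goes through with no restriction on $g$, giving the isomorphism in all bidegrees.

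The main obstacle I expect is Step~2: the careful verification that the relevant obstruction and indeterminacy groups in the cohomology of the Thom space of $E|_{X\setminus F}$ vanish in the bidegrees $(n,q)$ and $(n+1,q)$. This is essentially a statement about Thom spaces over free $C_2$-spaces, so it may be cleanest to develop the free Thom isomorphism (Theorem \ref{intro2}) first and invoke it here, rather than trying to prove the two statements completely independently. Once the extension and uniqueness of $u_E$ are in hand, the five-lemma induction for the isomorphism property is routine given Lemma \ref{tauiso} and the structural tools of Section \ref{ch:backcoh}.
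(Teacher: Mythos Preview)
Your approach is genuinely different from the paper's, and the specific mechanism you propose for existence of $u_E$ has a gap.

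The paper does not use the cofiber sequence $F^{E|_F}\to X^E\to X^E/F^{E|_F}$ at all. Instead it uses the isotropy separation square
\[
\begin{tikzcd}
(EC_2\times (X^{C_2})^E)_+ \arrow[r]\arrow[d] & (X^{C_2})^E\arrow[d]\\
(EC_2\times X^E)_+ \arrow[r] & X^E
\end{tikzcd}
\]
and its associated Mayer--Vietoris sequence. The class is obtained by gluing your $w=\sum_i\tau^{q-q_i}v_i$ on the fixed part to the Borel Thom class $\tau^q u_E^{Bor}$ (coming from Theorem~\ref{borthom}) on the free part; one checks they agree in the Borel cohomology of $(X^{C_2})^E$ by restricting to fibers. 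Part (iii) then follows by a single five--lemma on this same square, using Lemma~\ref{tauiso} only for the $(-)\smile w$ map. No CW induction is needed.

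Turning to your argument: the indeterminacy group $\H^{n,q}(X^E/F^{E|_F})$ does vanish (the cofiber has a single fixed point, so by Corollary~\ref{freestructure} its cohomology is a sum of $\Sigma^{m_j,0}A_{n_j}$, and a singular--cohomology argument forces $m_j\geq n+1$). But your claim that the \emph{obstruction} group $\H^{n+1,q}(X^E/F^{E|_F})$ vanishes by a dimension count is false. Already in Example~\ref{mobiusovers11} (the M\"obius bundle over $S^{1,1}$, with $n=1$, $q=1$) the cofiber is $C_{2+}\wedge S^2$, so $\H^{2,1}\cong\Z/2\neq 0$. More generally, whenever $F$ has more components than $X$ the cokernel of $H^0_{sing}(X)\to H^0_{sing}(F)$ contributes an $A_{n_j}$ summand with $m_j=n+1$, and this group is nonzero in every weight. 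So a bare dimension count cannot produce the lift; you must show that the particular element $w$ maps to zero under the connecting homomorphism.

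This is salvageable, but not by invoking Theorem~\ref{intro2} as you suggest: the cofiber $X^E/F^{E|_F}$ is not a Thom space of any bundle, so the free Thom isomorphism does not compute it. What does work is to observe that, since all summands of the cofiber start in topological dimension $\geq n+1$, the forgetful map $\psi\colon \H^{n+1,q}(X^E/F^{E|_F})\to \H^{n+1}_{sing}(X^E/F^{E|_F})$ is injective (no $\rho$--divisibility from dimension $n$), and that $\psi(w)$ is visibly the restriction of the singular Thom class, whose obstruction is zero. This repairs the gap, but it is a genuine extra step beyond what you wrote, and it is exactly the sort of compatibility the paper's Borel--cohomology route builds in from the start.
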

\begin{theorem}[Thom isomorphism for free base spaces, Part I]\label{thomfree}
Let $X$ be a free $C_2$-space and $E\to X$ be an $n$-dimensional $C_2$-vector bundle. Then for each $q\in \Z$ there is a unique class $u_{E,q}\in H^{n,q}(E,E')$ such that the following holds:
\begin{enumerate}[label=(\roman*)]
	\item For every pair of conjugate points $x,\sigma x\in X$, the class $u_{E,q}$ restricts to the unique nonzero element in bidegree $(n,q)$ in $H^{*,*}(E_{x,\sigma x},E_{x,\sigma x}')\cong\H^{*,*}(S^{n}\wedge C_{2+})\cong \Sigma^{n,0}A_0$;
	\item $\tau\cdot u_{E,q} = u_{E,q+1}$; and
	\item The map $\phi_{E,q}=\pi^{*}(-)\smile u_{E,q}$ is an isomorphism for all $q$;
\end{enumerate}
\end{theorem}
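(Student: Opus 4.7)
The plan is to bootstrap from the Borel Thom isomorphism (Theorem \ref{borthom}) using the fact that Bredon and Borel cohomology agree for free $C_2$-spaces. Note that when $X$ is free, both $E$ and $E' = E - 0$ are free $C_2$-spaces (since $C_2$ acts freely on the fibers over a free base), so by Remark \ref{freeboriso} we have natural isomorphisms $H^{*,*}(E,E') \cong H^{*,*}_{Bor}(E,E')$ and $H^{*,*}(X) \cong H^{*,*}_{Bor}(X)$ that are compatible with cup products and pullbacks.

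Concretely, Theorem \ref{borthom} yields a Borel Thom class $u_E^{Bor} \in H^{n,0}_{Bor}(E,E')$ that restricts to the nonzero class on every free fiber and such that $\pi^*(-) \smile u_E^{Bor}$ is an isomorphism. Let $u_{E,0} \in H^{n,0}(E,E')$ denote the class corresponding to $u_E^{Bor}$ under the Bredon--Borel identification, and for each $q \in \Z$ set $u_{E,q} := \tau^q \cdot u_{E,0}$; this is well-defined for every integer $q$ because $\tau$ acts invertibly on the Borel, hence Bredon, cohomology of any free space. Property (ii) holds by construction. For property (i), $u_{E,0}$ restricts to the nonzero class on each fiber by Theorem \ref{borthom}(i), and multiplying by $\tau^q$ produces the unique nonzero element in bidegree $(n,q)$ of $H^{*,*}(E_{x,\sigma x}, E_{x,\sigma x}') \cong \Sigma^{n,0}A_0$, since $\tau$ acts as an isomorphism on $A_0$. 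For property (iii), the $q=0$ case is Theorem \ref{borthom}(ii) translated through the Bredon--Borel identification, and for general $q$, multiplication by $\tau^q$ is an isomorphism on both $H^{*,*}(X)$ and $H^{*,*}(E,E')$, so cupping with $u_{E,q} = \tau^q u_{E,0}$ remains an isomorphism.

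For uniqueness, suppose $v \in H^{n,q}(E,E')$ also restricts to the nonzero element on each free fiber. Property (iii) yields a unique $\gamma \in H^{0,0}(X)$ with $v - u_{E,q} = \pi^*\gamma \smile u_{E,q}$. Restricting to each fiber $E_{x,\sigma x}$ forces $\gamma|_{\{x,\sigma x\}} = 0$ in $H^{0,0}(C_2) \cong \Z/2$, and since $H^{0,0}(X) \cong H^{0}_{sing}(X/C_2)$ is detected by its values on the connected components of $X/C_2$ (each of which contains at least one free orbit), we conclude $\gamma = 0$ and hence $v = u_{E,q}$. The main subtlety is keeping the Borel--Bredon dictionary straight at the level of fiber restrictions and making explicit why $\tau$-invertibility lets us spread the weight-zero Thom isomorphism to every weight simultaneously; once that translation is in place, the theorem is essentially a direct consequence of the classical singular Thom isomorphism applied to the bundle $EC_2 \times_{C_2} E \to EC_2 \times_{C_2} X$.
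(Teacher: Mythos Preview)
Your proof is correct and rests on the same key input as the paper's---the Borel Thom class of Theorem \ref{borthom}---but your route is more direct. The paper proves Theorems \ref{thomnonfree} and \ref{thomfree} simultaneously via the homotopy pushout square
\[
\begin{tikzcd}
(EC_2\times (X^{C_2})^E)_+ \arrow[r]\arrow[d] & (X^{C_2})^E \arrow[d]\\
(EC_2\times X^E)_+ \arrow[r] & X^E
\end{tikzcd}
\]
and its associated long exact sequence; in the free case $X^{C_2}=\varnothing$, the square degenerates and the argument reduces to lifting $\tau^q u_E^{Bor}$ along the injection $\tilde H^{n,q}(X^E)\hookrightarrow H^{n,q}_{Bor}(X^E)$. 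You instead invoke Remark \ref{freeboriso} directly: since $E$ and $E'$ are free, the projection $EC_2\times(-)\to(-)$ is a weak equivalence on both, so Bredon and Borel cohomology of the pair $(E,E')$ coincide on the nose, and the Borel Thom isomorphism transports wholesale. This buys you a cleaner proof for the free case at the cost of not fitting into the unified framework the paper needs for the nonfree case. Your explicit uniqueness argument via $\gamma\in H^{0,0}(X)$ is also a nice addition; the paper gets uniqueness from the injectivity of the comparison map in the long exact sequence. One small wording fix: the parenthetical ``since $C_2$ acts freely on the fibers over a free base'' is not quite the reason---$C_2$ need not act freely on individual fibers---rather, $E$ is free because $\pi$ is equivariant and $X$ is free.
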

\begin{proof}[Proof of Theorem \ref{thomnonfree} and Theorem \ref{thomfree}]
The proof will use Theorem \ref{borthom}, Theorem \ref{trivthom}, and the following homotopy pushout square:
\begin{center}
	\begin{equation}\label{homsq1}
	\begin{tikzcd}
	(EC_2\times (X^{C_2})^E)_+ \arrow[r] \arrow[d]& (X^{C_2})^E\arrow[d]\\
	(EC_2\times X^E)_+ \arrow[r] & X^E
	\end{tikzcd}
	\end{equation}
\end{center}
One can check this is a homotopy pushout square by considering the maps on the underlying spaces and then on the fixed sets. Using the square above, we get the following long exact sequence from applying reduced Bredon cohomology:
	\[
	\to H^{p-1,q}(EC_2\times (X^{C_2})^E) \to \H^{p,q}(X^E) \to \H^{p,q}( (X^{C_2})^E) \oplus H^{p,q}(EC_2\times X^E) \to 
	\]
By definition, the Bredon cohomology of $EC_2\times X^E$ is the bigraded Borel cohomology of $X^E$. By Theorem \ref{borthom} there is a Thom class $u_E^{Bor}\in H^{n,0}_{Bor}(X^E)$. We can multiply this class by powers of $\tau$ to get elements $\tau^{q}u_E^{Bor}\in H^{n,q}_{Bor}(X^E)$. Note $\tau$ acts invertibly on Borel cohomology, so $\tau^qu_E^{Bor}$ satisfies the same properties as $u_E^{Bor}$.  

If $X$ is free, then $X^{C_2}$ is empty and $(X^{C_2})^E=pt$. The map $H^{p,q}(EC_2\times X^E) \to H^{p,q}(EC_2\times pt)$ is surjective and by restricting to fibers, one can check the Thom class $u_E^{Bor}$ is sent to zero. Thus $\H^{n,q}(X^E) \to H^{n,q}(EC_2\times X^E)$ is injective and there is a unique lift of $u_E^{Bor}$. Let $u_{E,q}$ be the pullback of $\tau^q u_{E}^{Bor}$ under this isomorphism. Note if $q<0$, we can still make sense of this because $\tau$ acts invertibly on Borel cohomology. The class $u_{E,q}$ then satisfies the properties of the theorem due to the properties of $\tau^qu_{E}^{Bor}$. This completes the proof of Theorem \ref{thomfree}.

If $X$ is nonfree, then we need to consider $(X^{C_2})^E$. Recall $X_1, \dots, X_m$ are the connected components of $X^{C_2}$. Let $E_i$ be the restriction of $E$ to $X_i$. Then $(X^{C_2})^E=X_1^{E_1}\vee \dots \vee X_k^{E_k}$ and 
	\[
	\H^{*,*}((X^{C_2})^E) \cong \H^{*,*}(X_1^{E_1})\oplus \dots \oplus \H^{*,*}(X_k^{E_k})
	\]
Each component $X_i$ is a connected and finite CW-complex, so we can apply Lemma \ref{trivthom} to get Thom classes $u_{E_i}\in \H^{n,q_i}(X_i^{E_i})$. Let $u_{E}^{fix}=\sum_{i=1}^k \tau^{q-q_i}u_{E_i}$ where recall $q$ is the largest of the $q_i$. 

Roughly speaking, we now paste together the classes $\tau^qu_{E}^{Bor}$ and $u_{E}^{fix}$ to get the class $u_{E}$ described in Theorem \ref{thomnonfree}. To be more precise, let's consider the following portion of the long exact sequence described at the beginning of the proof:
	\[
	 H^{n-1,q}(EC_2\times (X^{C_2})^E) \to \H^{n,q}(X^E) \to \H^{n,q}( (X^{C_2})^E) \oplus H^{n,q}(EC_2\times X^E) 
	\]
By the Borel Thom isomorphism theorem, 
	$$\H^{n-1,q}_{Bor}((X^{C_2})^E)=\H^{n-1,q}_{Bor}(X^E)=0.$$ 
These groups aren't zero when working unreduced because the cohomology of a point is $\Z/2[\rho,\tau^{\pm 1}]$, but the map $H^{n-1,q}_{Bor}( X^E) \to H^{n-1,q}_{Bor}((X^{C_2})^E)$ will be an isomorphism (this is just the cohomology of a point mapping to itself). Thus the left map in the above sequence must be zero and the right map is injective. Let's extend the long exact sequence to the right to get the following:
	\[
	\H^{n,q}(X^E) \hookrightarrow \H^{n,q}( (X^{C_2})^E) \oplus \H^{n,q}((EC_2\times X^E)_+) \to  \H^{n,q}((EC_2\times (X^{C_2})^E)_+).
	\]
We next show $u_{E}^{fix}+\tau^qu_{E}^{Bor}$ maps to zero under the right map by restricting to the fibers. Let $x\in E_{i}$ and let $\hat{E_x}$ denote the one-point compactification of the fiber $E_x$. We have a homotopy pushout square as shown below, and the inclusion $\hat{E_x}\hookrightarrow X^E$ gives a map from the square below to the square shown in \ref{homsq1}.
\begin{center}
\begin{equation}\label{homsq2}
	\begin{tikzcd}
	(EC_2\times \hat{E_{x}})_+ \arrow[r] \arrow[d]&  \hat{E_{x}}\arrow[d]\\
	(EC_2\times  \hat{E_{x}})_+ \arrow[r] & \hat{E_{x}}
	\end{tikzcd}
\end{equation}
\end{center}
Let's consider the long exact sequence for the above square as well as the induced map to square shown in \ref{homsq1}. We get the following commutative square:
\begin{center}
\begin{equation}\label{commfiber}
	\begin{tikzcd}
	\H^{n,q}( (X^{C_2})^E) \oplus \H^{n,q}((EC_2\times X^E)_+) \arrow[r]\arrow[d]&\H^{n,q}((EC_2\times (X^{C_2})^E)_+)\arrow[d,"\cong"]\\
	\H^{n,q}(\hat{E_x}) \oplus \H^{n,q}((EC_2\times  \hat{E_{x}})_+ ) \arrow[r]&\H^{n,q}((EC_2\times \hat{E_x})_+)\
	\end{tikzcd}
\end{equation}
\end{center}
The sum of the restrictions of $u_{E}^{fix}$ and $\tau^qu_{E}^{Bor}$ maps to zero in the down-across route (this can be seen by forgetting to singular cohomology, for example), and thus $u_{E}^{fix}+\tau^qu_{E}^{Bor}$ maps to zero in the top row. We can now define the class $u_{E}$ in $\H^{n,q}(X^E)$ to be the unique pullback of this sum. 

To finish the proof, we just need to check $u_E$ satisfies the various properties described in the theorem. Most of these properties follow immediately from properties of $u_E^{fix}$ and $u_E^{Bor}$. For property (i), consider the diagram labeled \ref{commfiber}. Note $u_E^{fix}\mapsto \tau^{q-q_i}\alpha_x$ where $\alpha_x$ is the generator of $\H^{n,q}(\hat{E_x})$. If we extend the rows of this diagram to the left, then we see $u_E\mapsto \tau^{q-q_i}\alpha_x$ under the restriction as well. We can similarly show property (ii) by considering the fibers $E_{x,\sigma x}$ and using $u_E^{Bor}$. To prove property (iii), we consider the following map of long exact sequences:
\begin{center}
\begin{tikzcd}[column sep=small]
	 H^{f-1+n,g+q}(EC_2\times (X^{C_2})^E) \arrow[r]& \H^{f+n,g+q}(X^E) \arrow[r]&  \begin{array}{c}\H^{f+n,g+q}( (X^{C_2})^E) \\ \oplus \\ H^{f+n,g+q}(EC_2\times X^E)\end{array}\\
	  H^{f-1,g}(EC_2\times X^{C_2}) \arrow[r]\arrow[u,"\smile u_{E|_{X^{C_2}}}^{Bor}"]& \H^{f,g}(X) \arrow[r]\arrow[u,"\smile u_E"]& \H^{f,g}( X^{C_2}) \oplus H^{f,g}(X)\arrow[u,"\smile u_E^{fix}\oplus \smile u_E^{Bor}"]
\end{tikzcd}
\end{center}

By the five-lemma, the center vertical map will be an isomorphism if the previous two and following two vertical maps are isomorphisms. Multiplication by the Borel Thom class always gives an isomorphism, but issues arise when considering $u_E^{fix}$. Recall the class $u_{E}^{fix}$ was defined to be the sum $\sum_{i=1}^k \tau^{q-q_i}u_{E_i}$ and $\tau$ does not act invertibly on $H^{*,*}((X^{C_2})^E)$. Though, the map $H^{f,g}((X^{C_2})^E)\to H^{f,g+1}((X^{C_2})^E)$ given by multiplication by $\tau$ is an isomorphism whenever $g\geq f$ by Lemma \ref{tauiso}. Note if $g\geq f$, then $g\geq f-1$, so we can apply the five lemma to see multiplication by $u_E$ is an isomorphism whenever $g\geq f$. This proves property (iii). If $q_i=q$ for all i, then cupping with $u_E^{fix}$ is an isomorphism in all ranges, and this gives property (iv). This completes the proof of Theorem \ref{thomnonfree}.
\end{proof}

Part I of the theorems proved the defining properties of Thom classes in Bredon cohomology. In the second part we prove additional properties of the Thom classes and of the cohomology of Thom spaces.

\begin{theorem}[Thom isomorphism for nonfree base spaces, Part II]\label{thomnonfreeII} Let $X$, $\pi:E\to X$, $n$, $X_i$, $q_i$, $q$ and $u_E\in H^{n,q}(E,E')$ be defined as in Theorem \ref{thomnonfree}. The following additional properties hold:

\begin{enumerate}[label=(\roman*)]
	\item[(v)] $\psi(u_E)$ is the singular Thom class, where $\psi:H^{n,q}(E,E')\to H^n_{sing}(E,E')$ is the forgetful map;
	\item[(vi)] $\M_2\cdot u_E \cong \Sigma^{n,q}\M_2$ where $\M_2\cdot u_E$ denotes the submodule generated by $u_E$;
	\item[(vii)] Suppose 
		\[
		H^{*,*}(X) \cong (\oplus_{i=1}^c \Sigma^{k_i,\ell_i}\M_2) \oplus (\oplus_{j=1}^d \Sigma^{s_j,0}A_{r_j}).
		\] 
		Then		
		\[
		H^{*,*}(E,E') \cong (\oplus_{i=1}^c \Sigma^{k_i+n,\ell'_i}\M_2) \oplus (\oplus_{j=1}^d \Sigma^{s_j+n,0}A_{r_j})
		\]  
	 where the weights $\ell_i'$ satisfy $\ell_i+q\geq \ell_i'\geq 0$.
\end{enumerate}
\end{theorem}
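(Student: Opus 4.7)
The plan is to dispatch (v) and (vi) directly from the restriction-to-fibers property of $u_E$ established in Theorem~\ref{thomnonfree}, and then to reduce (vii) to an algebraic matching argument that combines that partial Thom isomorphism with the structure theorem.

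For (v), I would use naturality of the forgetful map $\psi$ with respect to fiber inclusions. For a fixed point $x\in X_i$, the inclusion $(E_x,E_x')\hookrightarrow (E,E')$ gives
\[
\psi(u_E)\big|_{E_x}\,=\,\psi\bigl(u_E|_{E_x}\bigr)\,=\,\psi\bigl(\tau^{q-q_i}\alpha_x\bigr)\,=\,\psi(\alpha_x),
\]
since $\psi(\tau)=1$. Using the forgetful long exact sequence and the suspension isomorphism, $\psi$ sends the generator of $\H^{n,q_i}(S^{n,q_i})$ to the singular generator, so this restriction is the singular Thom class of $E_x$. An analogous computation over a free orbit uses Theorem~\ref{thomnonfree}(ii) together with the fact that the generator of $\Sigma^{n,0}A_0$ in bidegree $(n,q)$ forgets to the Thom class of $E_{x,\sigma x}\cong C_2\times\R^n$. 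Because the singular Thom class is uniquely characterized by its restrictions to fibers, this identifies $\psi(u_E)$.

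For (vi), I would pick an index $i$ with $q_i=q$ (which exists by the definition of $q$) and any $x\in X_i$, and consider the restriction $r_x\colon H^{*,*}(E,E')\to H^{*,*}(E_x,E_x')\cong\Sigma^{n,q}\M_2$. By Theorem~\ref{thomnonfree}(i), $r_x(u_E)=\alpha_x$, the free generator of $\Sigma^{n,q}\M_2$. Hence if $\alpha\in\M_2$ satisfies $\alpha\cdot u_E=0$, then $\alpha\cdot\alpha_x=0$ in $\Sigma^{n,q}\M_2$, forcing $\alpha=0$. Thus the $\M_2$-linear map $\alpha\mapsto \alpha\cdot u_E$ is injective with image the correctly shifted free module.

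Part (vii) is the technical heart and the step I expect to be the main obstacle; it corresponds to the algebraic argument deferred to Appendix~\ref{ch:algproof}. The plan is to first apply Theorem~\ref{structure} to obtain an a priori decomposition $H^{*,*}(E,E')\cong(\oplus_i\Sigma^{a_i,b_i}\M_2)\oplus(\oplus_j\Sigma^{c_j,0}A_{d_j})$, and then match the summand data against that of $H^{*,*}(X)$ shifted by $(n,q)$. In the range $g\geq f$, where Theorem~\ref{thomnonfree}(iii) gives an honest isomorphism $H^{f,g}(X)\cong H^{f+n,g+q}(E,E')$, the counts of contributions to each bidegree must agree on the two sides: inspecting $(f,g)$ with $g$ very large matches both the number and the topological degrees of free summands, while inspecting bidegrees with large but admissible $f-g$ matches the $A_{r}$ summand data and forces the topological shift $s_j\mapsto s_j+n$ together with the equality of the $r_j$. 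The weight bound $\ell_i'\leq \ell_i+q$ will follow because a free summand of $H^{*,*}(E,E')$ of weight $\ell_i'$ must be generated by a class in the image of $\phi_E$ from a class of weight $\ell_i'-q$ in $H^{*,*}(X)$, which forces $\ell_i'-q\leq \ell_i$; the lower bound $\ell_i'\geq 0$ is part of Theorem~\ref{structure}. The combinatorial difficulty, which the appendix resolves, is to rule out cross-contamination between free and $A_{r}$ summand contributions along the boundary $g=f$ of the Thom isomorphism range so that the matching of summands is both complete and unambiguous.
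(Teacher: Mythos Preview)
Your arguments for (v) and (vi) are correct and close to the paper's. For (v) the paper does exactly what you do: restrict to a fiber over a fixed point and use naturality of $\psi$. For (vi) the paper also restricts to a fiber over some $x$ with $q_i=q$, but then only observes that $\theta\cdot u_E$ is nonzero there and invokes \cite[Lemma~4.1]{M1} (``$\theta$ detects free copies of $\M_2$''). Your version---checking that the restriction map is $\M_2$-linear and sends $u_E$ to a free generator, hence $\alpha\mapsto\alpha\cdot u_E$ is injective---is the same restriction-to-fiber idea but packaged more directly, and it avoids the external citation.

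For (vii), your outline has the right shape but the specific justification you give for the weight bound $\ell_i'\le\ell_i+q$ does not work as stated. You assert that the generator of a free summand of $H^{*,*}(E,E')$ in bidegree $(k_i+n,\ell_i')$ ``must be generated by a class in the image of $\phi_E$''. But that generator lies in the range $g\ge f$ only when $\ell_i'-q\ge k_i$, and since the structure theorem gives $k_i\ge\ell_i$ this would force $\ell_i'\ge\ell_i+q$, the wrong inequality; in general the generator is \emph{not} visible to $\phi_E$ and you only see its $\tau$-multiples. Extracting the bound from those $\tau$-multiples requires a genuine change-of-basis argument. The paper's appendix handles this not by bidegree counting but by algebraically separating the two kinds of summands: it passes to the $\rho$-torsion submodule $T(-)$ and then to $\tau^{-1}T(-)$ to match the $A_r$ summands (the localization turns the partial isomorphism into a global one), and it passes to the quotient $(-)/T(-)$ and then mods out $\im(\rho)$ to reduce the free-summand comparison to a statement about finitely generated free $\Z/2[\tau]$-modules (Lemma~\ref{poly}), where a Gaussian-elimination style argument produces the weight inequality. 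Your ``cross-contamination'' worry is exactly what this torsion/localization maneuver cleanly circumvents.
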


\begin{proof}
For properties (v) and (vi), let $x\in X^{C_2}$ such that $E_x\cong \R^{n,q}$. By restricting to this fiber and forgetting to singular cohomology, we get the following commutative diagram:
	\begin{center}
	\begin{tikzcd}
	H^{n,q}(E,E') \arrow[r] \arrow[d,"\psi"]& H^{n,q}(E_x, E_x')\arrow[d,"\psi", "\cong" left]\\
	H^{n}_{sing}(E,E') \arrow[r] & H^{n}_{sing}(E_x, E_x')
	\end{tikzcd}
	\end{center}
The top horizontal map takes $u_E$ to the generator, and thus $\psi(u_E)$ is nonzero. This holds for all fibers, and so it must be that $\psi(u_E)$ is the singular Thom class; this shows property (v). For (vi), note $\theta\cdot u_E$ must be nonzero because of the top map. In \cite[Lemma 4.1]{M1}, it is shown the class $\theta$ detects copies of $\M_2$, which proves property (vi).

All that remains is property (vii). This property relies on a somewhat technical proof about ``nice" $\M_2$-modules. This is given in Appendix \ref{ch:algproof}.
\end{proof}

\begin{theorem}[Thom isomorphism for free base spaces, Part II]\label{thomfreeII} Let $X$, $\pi:E\to X$, and $u_{E,q}\in H^{n,q}(E,E')$ be defined as in Theorem \ref{thomfree}. The following additional properties hold:
\begin{enumerate}[label=(\roman*)]
	\item[(iv)] $\psi(u_{E,q})$ is the singular Thom class, where $\psi:H^{n,q}(E,E')\to H^n_{sing}(E,E')$ is the forgetful map; and
	\item[(v)] For any $C_2$-CW complex $Y$ and map $Y\to X$, $f^*(u_{E,q})=u_{f^*E,q}$.
\end{enumerate}
\end{theorem}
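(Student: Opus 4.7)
The plan is to reduce both properties to restriction-to-fibers arguments, analogous to the strategy used in Theorem \ref{thomnonfreeII} parts (v) and (vi): for (iv) I would invoke the uniqueness of the singular Thom class, and for (v) I would invoke the uniqueness statement of Theorem \ref{thomfree}(i).

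For (iv), naturality of the forgetful map gives, for any pair of conjugate points $\{x,\sigma x\}\subset X$, a commutative square
\begin{center}
\begin{tikzcd}
H^{n,q}(E,E') \arrow[r] \arrow[d,"\psi"] & H^{n,q}(E_{x,\sigma x},E'_{x,\sigma x}) \arrow[d,"\psi"] \\
H^n_{sing}(E,E') \arrow[r] & H^n_{sing}(E_{x,\sigma x},E'_{x,\sigma x})
\end{tikzcd}
\end{center}
with $E_{x,\sigma x}\cong C_2\times\R^n$. By Theorem \ref{thomfree}(i), $u_{E,q}$ maps along the top to the unique nonzero class of $H^{n,q}(E_{x,\sigma x},E'_{x,\sigma x})\cong \H^{n,q}(C_{2+}\wedge S^n)$. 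Using Lemma \ref{forgetfulles} and the fact that $\rho$ acts trivially on the cohomology of $C_{2+}\wedge S^n$, the forgetful long exact sequence degenerates to $0\to \Z/2\to \Z/2\oplus\Z/2\to \Z/2\to 0$, and one checks that $\psi$ sends this generator to the diagonal element of $\H^n_{sing}(C_{2+}\wedge S^n)\cong \Z/2\oplus\Z/2$, which is precisely the restriction of the singular Thom class. Since the singular Thom class $u^{sing}_E$ is uniquely characterized by restricting to the generator of $H^n_{sing}(E_x,E'_x)$ for every $x\in X$, I would conclude $\psi(u_{E,q})=u^{sing}_E$.

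For (v), the first step is the observation that equivariance of $f$ together with the freeness of $X$ forces $Y$ to be free: a fixed point $y\in Y$ would satisfy $f(y)=f(\sigma y)=\sigma f(y)$, contradicting the freeness of $X$. Hence Theorem \ref{thomfree} applies to $f^*E\to Y$, and $u_{f^*E,q}$ is the unique class restricting to the nonzero class on each pair of conjugate fibers. The canonical bundle map $\tilde{f}:f^*E\to E$ lying over $f$ restricts to a $C_2$-equivariant isomorphism on each pair of conjugate fibers, so $\tilde{f}^*(u_{E,q})$ restricts to the unique nonzero class on each pair-fiber of $f^*E$; the uniqueness in Theorem \ref{thomfree}(i) then gives $\tilde{f}^*(u_{E,q})=u_{f^*E,q}$.

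The main obstacle, and essentially the only calculation, is the identification in (iv) of $\psi$ applied to the generator of $H^{n,q}(C_{2+}\wedge S^n)$ with the diagonal class in $\H^n_{sing}(C_{2+}\wedge S^n)$; this requires tracking the forgetful long exact sequence carefully and using that $\tau$ forgets to $1$. Everything else is a direct application of the uniqueness statements already established in Theorem \ref{thomfree}(i).
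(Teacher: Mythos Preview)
Your proposal is correct and follows the same approach as the paper. For (iv) the paper simply says ``can be proven just as property (v) in the previous theorem'' (i.e.\ restrict to fibers, commute with $\psi$, invoke uniqueness of the singular Thom class), and for (v) it says ``note $Y$ must also be free, and then the statement follows by restricting to fibers and using uniqueness of the Thom class.'' You have carried out exactly this strategy; the one place you add genuine content is in (iv), where you correctly note that over a free orbit the right-hand $\psi$ is no longer an isomorphism (in contrast to the nonfree case over a fixed fiber) and you identify its image as the diagonal in $\H^n_{sing}(C_{2+}\wedge S^n)\cong\Z/2\oplus\Z/2$---a detail the paper elides but which is needed to conclude that $\psi(u_{E,q})$ restricts nontrivially to \emph{each} individual fiber.
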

\begin{proof}
Property (iv) can be proven just as property (v) in the previous theorem. For property (v), note $Y$ must also be free, and then the statement follows by restricting to fibers and using uniqueness of the Thom class for bundles over free $C_2$-CW complexes. 
\end{proof}

There is also a naturality statement for maps to nonfree $C_2$-CW complexes. We include this as a proposition below. 

\begin{prop}[Naturality of nonfree Thom class]\label{natnonfree} Let $X$ and $Y$ be finite $C_2$-CW complexes and suppose $X$ is nonfree. Let $E\to X$ be an $n$-dimensional $C_2$-vector with maximum weight $q$ over $X^{C_2}$. For any map $f:Y \to X$, there are two cases for $f^*u_{E}$:
\begin{enumerate}[label=(\roman*)]
\item If $Y$ is free, then $f^*u_E = u_{f^*E,q}$; and
\item If $Y$ is nonfree, then $f^*u_E = \tau^{q-q_Y} u_{f^*E}$ where $q_Y$ is the maximum weight of fibers over $Y^{C_2}$ in $f^*E$ (note $q_Y\leq q$). 
\end{enumerate}
\end{prop}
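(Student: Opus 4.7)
The strategy for both cases is to invoke uniqueness of the Thom classes constructed in Theorems \ref{thomnonfree} and \ref{thomfree}, combined with naturality of the Borel Thom class (Theorem \ref{borthom}) and of the trivial-base Thom class (Lemma \ref{trivthom}). In each case I would show that $f^*u_E$ has the same defining restrictions as the class on the right-hand side, and then conclude by uniqueness.

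For (i), when $Y$ is free, Remark \ref{freeboriso} says the natural map $\H^{n,q}(Y^{f^*E}) \to H^{n,q}_{Bor}(Y^{f^*E})$ is an isomorphism; indeed, in the proof of Theorem \ref{thomfree} the class $u_{f^*E,q}$ is defined as the unique preimage of $\tau^q u_{f^*E}^{Bor}$. It therefore suffices to compute the Borel image of $f^*u_E$. By construction the Borel image of $u_E$ is $\tau^q u_E^{Bor}$, and naturality of the Borel Thom class gives $f^*(\tau^q u_E^{Bor}) = \tau^q u_{f^*E}^{Bor}$. Hence $f^*u_E$ and $u_{f^*E,q}$ have the same Borel image, and are therefore equal.

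For (ii), when $Y$ is nonfree, I would rerun the long-exact-sequence argument from the proof of Theorem \ref{thomnonfree}, now with $Y$ in place of $X$ and in bidegree $(n,q)$. The Borel Thom isomorphism shows the relevant reduced Borel cohomology groups of the Thom spaces vanish in topological degree $n-1$ in every weight, producing the injectivity
\begin{equation*}
\H^{n,q}(Y^{f^*E}) \hookrightarrow \H^{n,q}\bigl((Y^{C_2})^{f^*E}\bigr) \oplus H^{n,q}_{Bor}(Y^{f^*E}).
\end{equation*}
So I need only check that $f^*u_E$ and $\tau^{q-q_Y} u_{f^*E}$ agree on both coordinates. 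The Borel check is identical to case (i), since both classes have Borel image $\tau^q u_{f^*E}^{Bor}$. For the fixed-set check, fix a component $Y_j \subseteq Y^{C_2}$ mapping into $X_i$; equivariance of $f$ forces the fiber weight over $Y_j$ to coincide with $q_i$, so by Lemma \ref{trivthom}(ii) we have $(f|_{Y_j})^* u_{E_i} = u_{(f^*E)|_{Y_j}}$. Thus $f^*u_E$ restricts over $Y_j^{(f^*E)|_{Y_j}}$ to $\tau^{q-q_i} u_{(f^*E)|_{Y_j}}$, which equals $\tau^{q-q_Y}\cdot \tau^{q_Y - q_i} u_{(f^*E)|_{Y_j}}$, the restriction of $\tau^{q-q_Y} u_{f^*E}$.

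The main point to watch in case (ii) is the weight bookkeeping: the original argument in Theorem \ref{thomnonfree} was performed at bidegree $(n,q_Y)$ rather than $(n,q)$. This is cost-free because the Borel vanishing used there is uniform in weight, so the injectivity step carries over without modification and the remaining equalities reduce to matching $\tau$-powers.
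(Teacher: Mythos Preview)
Your argument is correct and, at its core, is the same as the paper's: both reduce to checking that $f^*u_E$ satisfies the defining restriction properties and then invoking uniqueness. The paper's proof is a single sentence (``restrict to fibers and use the uniqueness of the Thom class''), whereas you unpack what that uniqueness actually requires---in particular, you correctly flag that in case~(ii) the uniqueness statement of Theorem~\ref{thomnonfree} is literally stated only at bidegree $(n,q_Y)$, and you supply the (easy) observation that the injectivity step there is weight-uniform, so the same argument pins down the class at $(n,q)$.

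One minor simplification: for case~(i) you can bypass the Borel comparison entirely. Since $Y$ is free, every fiber of $f^*E$ is a free orbit, and property~(ii) of Theorem~\ref{thomnonfree} (together with a direct check when $f(y)$ happens to be fixed) shows $f^*u_E$ restricts to the nonzero class in bidegree $(n,q)$ over each such fiber; the uniqueness clause of Theorem~\ref{thomfree} then gives $f^*u_E=u_{f^*E,q}$ immediately. Your Borel route is fine too, but this is closer to the paper's one-line intent.
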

\begin{proof}
These statements follow by restricting to fibers and using the uniqueness of the Thom class. 
\end{proof}

We conclude this section by considering what happens to the nonfree Bredon Thom class under taking fixed points; this will be helpful in our discussion of fundamental classes that follows. Suppose $X$ is nonfree and as before, let $X_1,\dots, X_m$ be the connected components of $X^{C_2}$. Again suppose we have an $n$-dimensional $C_2$-vector bundle $E\to X$ such that the weight of the fibers over each $X_i$ is $q_i$. By taking fixed points of $E_i$, we obtain a vector bundle $E_i^{C_2} \to X_i$ of dimension $n-q_i$. The inclusion $E^{C_2} \hookrightarrow E$ induces a map 
	\[
	j:X_1^{E_1^{C_2}}\vee \dots \vee X_m^{E_m^{C_2}} \to X^E.
	\] 
\begin{prop}\label{fixpts}
Under the map $j^*:\H^{*,*}(X^E) \to  \bigoplus_{i=1}^m \H^{*,*}(X_m^{E_m^{C_2}})$ where $j$ is the map described above, we have $j^*(u_E) = \sum_{i=1}^m \tau^{q-q_i}\rho^{q_i}\cdot u_{E_i^{C_2}}$.
\end{prop}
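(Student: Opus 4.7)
The plan is to reduce the stated identity to a fiber computation by first decomposing the map $j$ componentwise and then invoking naturality of the Thom class. For each component $X_i$ of $X^{C_2}$, the restriction of $j$ to the wedge summand $X_i^{E_i^{C_2}}$ factors as
\[
X_i^{E_i^{C_2}} \hookrightarrow X_i^{E_i} \longrightarrow X^E,
\]
where the first arrow is induced by the inclusion of the fixed subbundle $E_i^{C_2} \hookrightarrow E_i$ and the second by the inclusion $X_i \hookrightarrow X$. Proposition \ref{natnonfree} applied to the second map gives $u_E|_{X_i^{E_i}} = \tau^{q-q_i} u_{E_i}$. It therefore suffices to prove, for each $i$, that $u_{E_i}$ pulls back to $\rho^{q_i} u_{E_i^{C_2}}$ along the fixed-subbundle inclusion $X_i^{E_i^{C_2}} \hookrightarrow X_i^{E_i}$, after which one sums the contributions over $i$.

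Since $X_i$ has trivial $C_2$-action and every fiber of $E_i$ is isomorphic to $\R^{n,q_i}$, the bundle splits as $E_i = E_i^{C_2} \oplus E_i^{-C_2}$, where the second summand has all fibers isomorphic to $\R^{q_i, q_i}$. Using multiplicativity of Thom classes for direct sums, $u_{E_i} = u_{E_i^{C_2}} \smile u_{E_i^{-C_2}}$, and the inclusion $X_i^{E_i^{C_2}} \hookrightarrow X_i^{E_i}$ corresponds fiber-wise to the zero section of $E_i^{-C_2}$. The standard identification of the pullback of a Thom class along its zero section with the equivariant Euler class then yields $u_{E_i}|_{X_i^{E_i^{C_2}}} = u_{E_i^{C_2}} \smile e_{C_2}(E_i^{-C_2})$, where $e_{C_2}(E_i^{-C_2}) \in H^{q_i, q_i}(X_i)$.

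The main obstacle is identifying $e_{C_2}(E_i^{-C_2})$ with $\rho^{q_i}$. On a single fiber this is a direct consequence of the representation-sphere inclusion $S^0 \hookrightarrow S^{q_i,q_i}$, under which the generator of $\H^{q_i,q_i}(S^{q_i,q_i})$ pulls back to $\rho^{q_i}$; this in turn reduces via the smash decomposition $S^{q_i,q_i} \cong (S^{1,1})^{\wedge q_i}$ to the basic fact that the generator of $\H^{1,1}(S^{1,1})$ restricts to $\rho$ under the inclusion of the two fixed points $S^0 \hookrightarrow S^{1,1}$. To promote this fiber computation to a global identity, I would translate the claim through the Thom isomorphism of Lemma \ref{trivthom} into an equality in $H^{q_i,q_i}(X_i)$, choose a finite open cover of $X_i$ trivializing $E_i^{-C_2}$, and run a Mayer--Vietoris / uniqueness argument parallel to the patching proof of Lemma \ref{trivthom} to promote the local identity $e_{C_2}(E_i^{-C_2})|_{U_\alpha} = \rho^{q_i}$ to a global one. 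Summing the identity $u_{E_i}|_{X_i^{E_i^{C_2}}} = \rho^{q_i} u_{E_i^{C_2}}$ over $i$ and combining with the first paragraph's naturality step yields the stated formula for $j^*(u_E)$.
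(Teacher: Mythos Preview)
Your strategy parallels the paper's closely: both factor $j$ through the intermediate Thom space $(X^{C_2})^E$, identify the first restriction as $\sum_i\tau^{q-q_i}u_{E_i}$ (you via Proposition~\ref{natnonfree}, the paper via the explicit construction of $u_E^{fix}$ in the proof of Theorem~\ref{thomnonfree}), and then reduce the map $\H^{*,*}(X_i^{E_i})\to\H^{*,*}(X_i^{E_i^{C_2}})$ to the fiber computation that the generator of $\H^{n,q_i}(S^{n,q_i})$ restricts to $\rho^{q_i}$ times the generator of $\H^{n-q_i,0}(S^{n-q_i,0})$. Your Euler-class reformulation and the multiplicativity $u_{E_i}=u_{E_i^{C_2}}\smile u_{E_i^{-C_2}}$ make the structure of this last step more transparent than the paper's diagram chase.

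There is, however, a genuine gap at the step $e_{C_2}(E_i^{-C_2})=\rho^{q_i}$: this identity fails in general. The bundle $E_i^{-C_2}$ over the trivial space $X_i$ need not be nonequivariantly trivial, and its equivariant Euler class sees this. For instance, take $X=X_1=S^1$ with trivial action and $E=E_1$ the M\"obius line bundle with $C_2$ acting by $-1$ on fibers, so $E_1^{C_2}=0$ and $E_1^{-C_2}=E_1$. Then $H^{1,1}(S^1)\cong\Z/2\{\rho\}\oplus\Z/2\{\tau\alpha\}$ with $\alpha$ a generator of $H^{1,0}(S^1)$, and since the forgetful map sends $e_{C_2}(E_1)$ to $w_1(E_1)=\alpha\neq 0$ while $\psi(\rho)=0$, one finds $e_{C_2}(E_1)=\rho+\tau\alpha\neq\rho$. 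Your Mayer--Vietoris patching cannot repair this: the local identity $e_{C_2}|_{U}=\rho$ does hold on each trivializing open, but the connecting homomorphism is precisely where the global $w_1$-contribution enters, so the difference $e_{C_2}-\rho$ survives as a boundary term rather than vanishing. The paper's own argument at the corresponding point (``the only such class'' with the given fiber restrictions is $\rho^{q_i}u_{E_i^{C_2}}$) invokes the same unjustified uniqueness, since the fiber-restriction map out of $\H^{n,q_i}(X_i^{E_i^{C_2}})$ is not injective in general.
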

\begin{proof} Let $x_i\in E_i$ and consider the restriction to this fiber and the restriction to the fixed sets. Note $H^{*,*}(E_{x_i},E_{x_i}')\cong \H^{*,*}(S^{n,q_i})$ and these restrictions fit together to give the following commutative diagram:
\begin{center}
	\begin{tikzcd}
	\H^{*,*}(X^E) \arrow[dr]\arrow[r]& \H^{*,*}((X^{C_2})^E) \arrow[r] \arrow[d]& \H^{*,*}((X^{C_2})^{E^{C_2}}) \arrow[d]\\
& \H^{*,*}(S^{n,q_i}) \arrow[r]&\H^{*,*}((S^{n,q_i})^{C_2}) = \H^{*,*}(S^{n-q_i,0})
	\end{tikzcd}
\end{center}
Let $\alpha \in \H^{n,q_i}(S^{n,q_i})$ and $\beta \in \H^{n-q_i,0}(S^{n-q_i,0})$ denote the respective generators of these rank one free modules. Let's analyze the bottom right map. Consider the cofiber sequence $(S^{n,q_i})^{C_2}\hookrightarrow S^{n,q_i} \to S^{n,q_i}/(S^{n,q_i})^{C_2}$. If the bottom horizontal map was zero, the associated long exact sequence to this cofiber sequence would imply $\H^{*,*}(S^{n,q_i}/(S^{n,q_i})^{C_2})$ is a rank two free module, but the space $S^{n,q_i}/(S^{n,q_i})^{C_2}$ has exactly one fixed point, so this contradicts Corollary \ref{freestructure}. Thus the bottom horizontal map is not zero and it must be that $\alpha \mapsto \rho^{q_i}\beta$.

To prove the proposition, we trace the Thom class $u_E\in \H^{*,*}(X^E)$ around the diagram. Under the left diagonal map $u_E\mapsto \tau^{q-q_i}\alpha$, which then under the bottom horizontal map goes to $\tau^{q-q_i}\rho^{q_i}\beta$. Under the upper left horizontal map $u_E\mapsto u_E^{fix}=\sum_{i=1}^m \tau^{q-q_i}u_{E_i}$ by construction. Thus $u_E^{fix}$ must map to a class in $ \H^{*,*}((X^{C_2})^{E^{C_2}})$ that restricts to $  \tau^{q-q_i}\rho^{q_i}\beta$. Since this must hold over every component, the only such class is $\sum_{i=1}^m \tau^{q-q_i}\rho^{q_i}u_{E_i^{C_2}}$. The composition of the upper horizontal maps is exactly $j^*$, so this completes the proof. 
\end{proof}


\section{Fundamental classes}\label{ch:funclasses}

We now employ Theorems \ref{thomnonfree} and \ref{thomfree} to define fundamental classes for $C_2$-submanifolds. We prove a handful of properties about these classes, such as how they forget to the usual singular fundamental classes, and how the product of these fundamental classes is given in terms of the intersection of the submanifolds in the transverse case. We end the section with a series of examples.

\subsection{Nonequivariant fundamental classes} 
Recall for singular cohomology, we can define fundamental classes using the Thom isomorphism theorem as follows. Let $X$ be a closed $n$-dimensional manifold and $Y\subset X$ be a closed $k$-dimensional submanifold. If we are working with $\Z/2$-coefficients, all vector bundles over $Y$ are orientable. In particular, if $\pi:N\to Y$ is the normal bundle of $Y$ in $X$, then the Thom isomorphism theorem guarantees a unique class $u\in H^{n-k}(N,N-0)$ known as the Thom class and
	\[
	\pi^*(-)\smile u: H^{j}(Y)\to H^{n-k+j}(N,N-0) 
	\]
is an isomorphism. There exists a tubular neighborhood $U$ of $Y$ in $X$, and by excision we have the following isomorphism
	\[
	H^{n-k}(N,N-0)\cong H^{n-k}(U,U-Y)\cong H^{n-k}(X,X-Y).
	\]
Thus there is a unique nonzero class in $H^{n-k}(X,X-Y)$ corresponding to the Thom class. We can now define $[Y]\in H^{n-k}(X)$ to be the image of this unique class under the induced map from the inclusion of pairs $(X,\emptyset)\hookrightarrow (X,X-Y)$. We will often denote these singular classes by $[Y]_{sing}$ to distinguish them from the Bredon cohomology fundamental classes defined below. 

\subsection{Definitions and first properties}
In Section \ref{ch:eqvb}, we proved properties of the cohomology of Thom spaces of $C_2$-vector bundles. These properties are enough to transfer the above definition from the singular world to the equivariant world. 

Let $X$ be a closed $C_2$-manifold and $Y$ be a closed $C_2$-submanifold. First suppose both $Y$ and $X$ are nonfree, and suppose topologically $Y$ is $k$-dimensional and $X$ is $n$-dimensional. Let $N\to Y$ be the normal bundle of $Y$ in $X$, and let $q$ be the maximum weight of $N$ over $Y^{C_2}$ as in Theorem \ref{thomnonfree}. By this theorem, we are guaranteed a unique class $u_N\in H^{n-k,q}(N,N-0)$. Let $U$ be an equivariant tubular neighborhood of $Y$. Using this neighborhood and excision
	\[
	H^{n-k,q}(N,N-0)\cong H^{n-k,q}(U,U-Y) \cong H^{n-k,q}(X,X-Y).
	\]
We are now ready for the following definition.

\begin{definition}\label{funclasses} 
Let $Y$, $X$, $n$, $k$, and $q$ be defined as above. The unique nonzero class in $H^{n-k,q}(X,X-Y)$ corresponding to the Thom class $u_N$ in the above isomorphism is denoted by $[Y]_{rel}$ and referred to as {\textbf{the relative fundamental class of $\mathbf{Y}$ in $\mathbf{X}$}}. The image of this class in $H^{n-k,q}(X)$ under the induced map by the inclusion of the pair $(X,\emptyset) \hookrightarrow (X,X-Y)$ will be denoted $[Y]$ and referred to as {\textbf{the fundamental class of $\mathbf{Y}$ in $\mathbf{X}$}}.
\end{definition}

The following corollary of property (v) of Theorem \ref{thomnonfreeII} relates these equivariant fundamental classes to the nonequivariant fundamental classes.

\begin{corollary}\label{forget1} 
Suppose $Y\subset X$ as above. Then $\psi([Y])=[Y]_{sing}$ where $\psi$ is the forgetful map to singular cohomology. 
\end{corollary}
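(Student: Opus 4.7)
The plan is a straightforward naturality argument: trace the construction of $[Y]$ through the forgetful map $\psi$ and use property (v) of Theorem \ref{thomnonfreeII}, which already tells us that $\psi(u_N)$ equals the singular Thom class of the normal bundle $N$.

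First, I would observe that the forgetful map $\psi$ is a natural transformation of (reduced) cohomology theories from Bredon cohomology with $\underline{\Z/2}$-coefficients to singular cohomology with $\Z/2$-coefficients; this is essentially the content of the forgetful long exact sequence of Lemma \ref{forgetfulles}. Naturality of $\psi$ implies in particular that $\psi$ commutes with the maps used to construct $[Y]$: the excision isomorphisms
\[
H^{n-k,q}(N,N-0;\underline{\Z/2})\xrightarrow{\cong} H^{n-k,q}(U,U-Y;\underline{\Z/2})\xrightarrow{\cong} H^{n-k,q}(X,X-Y;\underline{\Z/2}),
\]
and the map $H^{n-k,q}(X,X-Y;\underline{\Z/2})\to H^{n-k,q}(X;\underline{\Z/2})$ induced by the inclusion of pairs $(X,\emptyset)\hookrightarrow(X,X-Y)$.

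Next, I would set up the commutative diagram whose top row carries out the Bredon construction of $[Y]$ starting from $u_N$, and whose bottom row carries out the singular construction of $[Y]_{sing}$ starting from the singular Thom class $u_N^{sing}$, with vertical arrows given by $\psi$. By property (v) of Theorem \ref{thomnonfreeII}, the leftmost vertical arrow sends $u_N\mapsto u_N^{sing}$. By Definition \ref{funclasses}, chasing $u_N$ across the top row produces $[Y]$, and chasing $u_N^{sing}$ across the bottom row produces $[Y]_{sing}$. Commutativity then forces $\psi([Y])=[Y]_{sing}$.

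There is essentially no obstacle here beyond bookkeeping; all the serious work was already done in establishing property (v) of Theorem \ref{thomnonfreeII}. The only mild care needed is to confirm that the tubular neighborhood $U$ used in the equivariant excision is a valid choice nonequivariantly as well, so that the same pair $(U,U-Y)$ mediates both excision isomorphisms — but this is immediate since the equivariant tubular neighborhood $U$ is, in particular, a (non-equivariant) tubular neighborhood of $Y$ in $X$.
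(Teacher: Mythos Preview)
Your proposal is correct and is precisely the argument the paper has in mind: the paper states this result as an immediate corollary of property (v) of Theorem \ref{thomnonfreeII} without further proof, and your naturality diagram chase is exactly how one unpacks that assertion.
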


For $Y\subset X$, let $Y^{C_2}=Y_1\sqcup \dots \sqcup Y_j$ where each $Y_i$ is a connected component. We have the following corollary to Proposition \ref{fixpts}.

\begin{corollary}\label{funfix} Under the map $i:X^{C_2}\to X$, $i^*([Y])=\sum_{i=1}^j \tau^{q-q_i}\rho^{q_i}[Y_i]$.
\end{corollary}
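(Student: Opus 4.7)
The plan is to deduce this corollary directly from Proposition \ref{fixpts} by translating the statement about Thom classes of normal bundles into a statement about relative fundamental classes via excision, and then passing to absolute cohomology. The key geometric fact I will use is that for each connected component $Y_i$ of $Y^{C_2}$, if $\widetilde{X_i}$ denotes the component of $X^{C_2}$ containing $Y_i$, then the equivariant normal bundle $N|_{Y_i}$ of $Y$ in $X$ restricts over $Y_i$ to a bundle whose fixed-point subbundle $(N|_{Y_i})^{C_2}$ is exactly the nonequivariant normal bundle of $Y_i$ in $\widetilde{X_i}$. This comes from the local linearity of the action: tangent spaces split equivariantly, and fixed directions in the normal bundle coincide with normal directions inside the fixed submanifold.

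Next, I would choose an equivariant tubular neighborhood $U$ of $Y$ in $X$. Intersecting with $X^{C_2}$ gives tubular neighborhoods $U_i$ of $Y_i$ in $\widetilde{X_i}$, and these assemble into a commutative diagram
\begin{center}
\begin{tikzcd}[column sep=small]
H^{n-k,q}(X,X-Y) \arrow[r,"\cong"]\arrow[d] & H^{n-k,q}(U,U-Y)\arrow[r,"\cong"]\arrow[d] & H^{n-k,q}(N,N-0)\arrow[d]\\
H^{n-k,q}(X^{C_2}, X^{C_2}-Y^{C_2})\arrow[r,"\cong"] & \bigoplus_i H^{n-k,q}(U_i,U_i-Y_i) \arrow[r,"\cong"] & \bigoplus_i H^{n-k,q}(N_i^{C_2}, N_i^{C_2}-0)
\end{tikzcd}
\end{center}
where the horizontal maps are excision and the Thom-space identification, $N_i^{C_2}=(N|_{Y_i})^{C_2}$, and the right-hand vertical map is exactly the map $j^*$ of Proposition \ref{fixpts} applied to the normal bundle $N\to Y$.

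By construction, $[Y]_{rel}$ corresponds to $u_N$ in the upper right corner, and each $[Y_i]_{rel}$ corresponds to $u_{N_i^{C_2}}$ in the lower right. Proposition \ref{fixpts} gives $j^*(u_N)=\sum_i \tau^{q-q_i}\rho^{q_i}u_{N_i^{C_2}}$, so chasing through the diagram yields
\[
i^*([Y]_{rel}) = \sum_{i=1}^{j} \tau^{q-q_i}\rho^{q_i}[Y_i]_{rel}
\]
in $H^{n-k,q}(X^{C_2}, X^{C_2}-Y^{C_2})$.

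Finally I pass from relative to absolute classes using naturality of the long exact sequence of the pair, i.e.\ the commutative square coming from the inclusion of pairs $(X,\emptyset)\hookrightarrow (X,X-Y)$ and its fixed-set analog. Pushing the displayed identity through this square gives the claimed formula $i^*([Y]) = \sum_{i=1}^{j}\tau^{q-q_i}\rho^{q_i}[Y_i]$. The only genuinely nontrivial input is the geometric identification of normal bundles and the existence of equivariant tubular neighborhoods whose intersection with $X^{C_2}$ is a disjoint union of tubular neighborhoods of the $Y_i$'s; once that is in hand the corollary is essentially a diagram chase applied to Proposition \ref{fixpts}.
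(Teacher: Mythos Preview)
Your proposal is correct and takes essentially the same approach as the paper: the paper simply states this as an immediate corollary of Proposition \ref{fixpts} without writing out the details, and you have correctly supplied the diagram chase that passes from Thom classes through relative fundamental classes to absolute ones. The identification of $(N|_{Y_i})^{C_2}$ with the normal bundle of $Y_i$ inside the ambient fixed component is exactly the geometric input needed, and the rest is naturality of excision and of the long exact sequence of a pair.
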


We can similarly define fundamental classes for free submanifolds. Let $X$ be an $n$-dimensional $C_2$-manifold and $Y\subset X$ be a $k$-dimensional $C_2$-submanifold with a free $C_2$-action. Again we can consider the normal bundle $N\to Y$ and from Theorem \ref{thomfree}, we have the Thom class $u_{N,q}\in H^{n-k,q}(N, N-0)$ for each weight $q$. We can now define the following.

\begin{definition} Let $X$, $Y$, $n$, $k$, and $N$ be as above. For every integer $q$, the unique element in $H^{n-k,q}(X,X-Y)$ corresponding to the Thom class $u_{N,q}$ is denoted $[Y]_{q,rel}$ and referred to as the {\textbf{relative fundamental class of $\mathbf{Y}$ in $\mathbf{X}$ of weight $\mathbf{q}$}}. The image of this relative class under the map induced by the inclusion of pairs $(X,\emptyset)\hookrightarrow (X,X-Y)$ is denoted $[Y]_q$ and referred to as the {\textbf{fundamental class of $\mathbf{Y}$ in $\mathbf{X}$ of weight $\mathbf{q}$}}.
\end{definition}

We have the following corollary of properties (ii) and (iv) of Theorems \ref{thomfree} and \ref{thomfreeII}.

\begin{corollary}\label{forget1} 
Suppose $Y\subset X$ as above. Then $\tau[Y]_q=[Y]_{q+1}$ and $\psi([Y]_q)=[Y]_{sing}$ where $\psi$ is the forgetful map to singular cohomology. 
\end{corollary}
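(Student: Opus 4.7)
The plan is to deduce both assertions by straightforward naturality, using the corresponding properties of the free Thom classes $u_{N,q}$ already established in Theorems \ref{thomfree} and \ref{thomfreeII}. The class $[Y]_q$ is, by definition, the image of $u_{N,q}$ under the composition
\[
H^{n-k,q}(N,N-0) \xrightarrow{\ \cong\ } H^{n-k,q}(X,X-Y) \longrightarrow H^{n-k,q}(X),
\]
where the first map is (the inverse of) the tubular-neighborhood excision isomorphism and the second is induced by the inclusion of pairs $(X,\emptyset)\hookrightarrow (X,X-Y)$. Both are instances of natural transformations of cohomology theories, so any structure that commutes with such transformations transports $u_{N,q}$ to $[Y]_q$ with the analogous property.

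First I would verify $\tau[Y]_q=[Y]_{q+1}$. By Theorem \ref{thomfree}(ii), $\tau\cdot u_{N,q}=u_{N,q+1}$ in $H^{n-k,q+1}(N,N-0)$. Since the excision isomorphism and the map induced by inclusion of pairs are maps of bigraded $\M_2$-modules (they commute with multiplication by any element of $\M_2=H^{*,*}(pt;\underline{\Z/2})$, in particular with the class $\tau\in H^{0,1}(pt;\underline{\Z/2})$), applying them to the relation $\tau\cdot u_{N,q}=u_{N,q+1}$ gives successively $\tau[Y]_{q,rel}=[Y]_{q+1,rel}$ and then $\tau[Y]_q=[Y]_{q+1}$.

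Next I would verify $\psi([Y]_q)=[Y]_{sing}$. The forgetful map from Lemma \ref{forgetfulles} is a natural transformation $H^{p,q}(-;\underline{\Z/2})\to H^{p}_{sing}(-;\Z/2)$, hence commutes with both the excision isomorphism and the map induced by the inclusion of pairs. By Theorem \ref{thomfreeII}(iv), $\psi(u_{N,q})$ is the singular Thom class of $N$. Applying $\psi$ to the two-step recipe defining $[Y]_q$ therefore produces the two-step recipe defining $[Y]_{sing}$ from the singular Thom class, and gives $\psi([Y]_{q,rel})=[Y]_{sing,rel}$ and then $\psi([Y]_q)=[Y]_{sing}$.

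There is no real obstacle here: both statements are formal consequences of naturality of excision and of $\psi$ together with the two highlighted properties of $u_{N,q}$, so the only thing to check is that the constructions producing $[Y]_{q,rel}$ and $[Y]_q$ from $u_{N,q}$ are compatible with the $\M_2$-action and with $\psi$, which is built into the axioms of an $RO(C_2)$-graded cohomology theory.
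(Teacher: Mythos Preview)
Your proposal is correct and follows exactly the route the paper intends: the paper states this result as an immediate corollary of property (ii) of Theorem \ref{thomfree} and property (iv) of Theorem \ref{thomfreeII}, and you have simply spelled out the naturality argument that transports those properties of $u_{N,q}$ through the excision isomorphism and the inclusion-of-pairs map to obtain the corresponding statements for $[Y]_q$.
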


\subsection{Naturality and product formulas} 
We now prove there are naturality and intersection product formulas for equivariant fundamental classes. The first theorem is a consequence of property (ii) of Theorem \ref{thomfree} and Lemma \ref{natnonfree}.

\begin{theorem}[Naturality of fundamental classes]\label{funnat} Let $X$ and $Y$ be $C_2$-manifolds and $Z\subset X$ be a $C_2$-submanifold of topological codimension $n-k$. Suppose $f:Y\to X$ is a smooth map that is transverse to $Z$. Then we have the following pullback formulas:
\begin{enumerate}[label=(\roman*)]
\item If $f^{-1}(Z)$ is free and $Z$ is nonfree with $[Z]\in H^{n-k,q}(X)$, then $f^*([Z])=[f^{-1}(Z)]_q$;
\item If $f^{-1}(Z)$ is nonfree with $[f^{-1}(Z)]\in H^{n-k,q'}(Y)$ and $Z$ is nonfree with $[Z]\in H^{n-k,q}(X)$, then $f^*([Z])=\tau^{q-q'}[f^{-1}(Z)]$; and
\item If $f^{-1}(Z)$ and $Z$ are free, then $f^*([Z]_q)=[f^{-1}(Z)]_q$ for all $q$.
\end{enumerate}
\end{theorem}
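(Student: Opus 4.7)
The plan is to reduce this theorem to the naturality statements already established for Thom classes, namely Proposition \ref{natnonfree} and property (v) of Theorem \ref{thomfreeII}. The link between Thom classes and fundamental classes is provided by the excision identification of the relative cohomology of a tubular neighborhood with the cohomology of the Thom space, so once the Thom-class naturality is in hand the work reduces to chasing a diagram.

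The first step is to invoke equivariant transversality: since $f$ is smooth and transverse to $Z$, the preimage $W := f^{-1}(Z)$ is a closed $C_2$-submanifold of $Y$ of topological codimension $n-k$, and the restriction $f|_W : W \to Z$ induces a canonical isomorphism of $C_2$-vector bundles $(f|_W)^{*}N \cong N_W$, where $N \to Z$ and $N_W \to W$ are the respective equivariant normal bundles. This identifies fibers and their $C_2$-representation types, so that in case (ii) the maximum weight $q'$ of $N_W$ over $W^{C_2}$ is exactly the maximum weight of $(f|_W)^{*}N$ over $W^{C_2}$; in cases (i) and (iii) the freeness hypothesis on $W$ matches the freeness hypothesis of the relevant Thom-class naturality statement.

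Next, I would choose equivariant tubular neighborhoods $U \supset Z$ in $X$ and $V \supset W$ in $Y$ with $f(V) \subset U$ (shrinking $V$ if necessary), so that $f$ restricts to a bundle map $\tilde f : N_W \to N$ covering $f|_W$ and sending the zero section to the zero section. Together with excision, this assembles into the following commutative diagram of $\underline{\Z/2}$-cohomology groups:
\begin{center}
\begin{tikzcd}[column sep=small]
H^{*,*}(N, N-0) \arrow[r,"\cong"] \arrow[d,"\tilde f^{*}"] & H^{*,*}(X, X-Z) \arrow[r] \arrow[d,"f^{*}"] & H^{*,*}(X) \arrow[d,"f^{*}"] \\
H^{*,*}(N_W, N_W - 0) \arrow[r,"\cong"] & H^{*,*}(Y, Y-W) \arrow[r] & H^{*,*}(Y)
\end{tikzcd}
\end{center}
By definition, the fundamental class $[Z]$ (or $[Z]_q$) is obtained by pushing the Thom class $u_N$ (or $u_{N,q}$) along the top row, and similarly for $[W]$ or $[W]_q$ along the bottom row. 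Thus $f^{*}[Z] = f^{*}[Z]$ is the image along the bottom-right of $\tilde f^{*}(u_N)$, and analogously in case (iii).

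Finally, applying the naturality of Thom classes to the left vertical map completes the proof in each case: Proposition \ref{natnonfree}(i) gives $\tilde f^{*}(u_N) = u_{N_W,q}$ in case (i), yielding $f^{*}[Z] = [W]_q$; Proposition \ref{natnonfree}(ii) gives $\tilde f^{*}(u_N) = \tau^{q-q'} u_{N_W}$ in case (ii), yielding $f^{*}[Z] = \tau^{q-q'}[W]$; and property (v) of Theorem \ref{thomfreeII} gives $\tilde f^{*}(u_{N,q}) = u_{N_W,q}$ in case (iii), yielding $f^{*}[Z]_q = [W]_q$. The main obstacle is the bookkeeping in the first step, ensuring the equivariant tubular neighborhood theorem and equivariant transversality produce bundle data compatible with the hypotheses of the Thom-class naturality statements; once the identification $(f|_W)^{*}N \cong N_W$ is set up correctly, the rest of the argument is a direct diagram chase.
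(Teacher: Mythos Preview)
Your proposal is correct and follows essentially the same approach as the paper: reduce to the naturality of Thom classes (Proposition \ref{natnonfree} and Theorem \ref{thomfreeII}) via the identification $(f|_W)^*N_Z\cong N_W$ coming from transversality, then chase a diagram. The only presentational difference is that the paper phrases the diagram in terms of Pontryagin--Thom collapse maps $p_Z:X_+\to Z^{N_Z}$ and $p_W:Y_+\to W^{N_W}$, whereas you work directly with the tubular-neighborhood/excision description of the relative groups; these are equivalent formulations of the same construction.
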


\begin{proof} Let $W=f^{-1}(Z)$. Denote the normal bundle for $Z$ in $X$ by $N_{Z}$ and  the normal bundle for $W$ in $Y$ by $N_{W}$. For each $w\in W$ the derivative of $f$ gives a map of tangent spaces $df_{w}:T_{w}Y\to T_{f(w)}X$. By definition of transversality, $df_w(T_{w}(W))=T_{f(w)}Z$. Thus we get a map on the quotient $df_w:(N_W)_w\to (N_Z)_{f(w)}$ which gives a map of bundles $df:N_W\to N_Z$. We have the following commutative square
	\begin{center}
	\begin{tikzcd}
	N_{W}\arrow[d] \arrow[r,"df"] & N_Z\arrow[d]\\
	W \arrow[r,"f"]&Z
	\end{tikzcd}
	\end{center}
which implies there is a map to the pullback $N_W\to f^*N_Z$. This is an isomorphism on all fibers and thus an isomorphism of vector bundles. 

Let $p_Z: X_{+}\to Z^{N_Z}$ denote the Pontryagin-Thom collapse map. An equivalent way to define the fundamental class of $Z$ is to pull back the Thom class for $N_Z$ under $p_Z$. Similarly we can define the fundamental class of $W$ using $p_{W}:Y_+\to W^{N_W}$. We have the following commutative diagram in the homotopy category:
	\begin{center}
	\begin{tikzcd}
	 Y_+\arrow[d,"f"] \arrow[r,"p_W"] &W^{N_W}\arrow[d,"\tilde{f}"]\arrow[r,"\cong"]& W^{f^{*}N_Z}\arrow[dl]\\
	 X_+ \arrow[r,"p_Z"]& Z^{N_Z} &
	\end{tikzcd}
	\end{center}
Consider cases (i), (ii), and (iii) in the theorem. The commutativity of the right triangle along with Lemma \ref{natnonfree} imply in case (i) $\tilde{f}^*(u_{N_Z}) = u_{N_W,q}$ and in case (ii) $\tilde{f}^{*}(u_{N_Z}) = \tau^{q-q'}u_{N_W}$. Part (iv) of Theorem \ref{thomfreeII} implies in case (iii) $\tilde{f}^*(u_{N_Z,q}) = u_{N_W,q}$. Pulling back to $X_+$ and $Y_+$ then completes the proof. 
\end{proof}

As in the nonequivariant case, we have a nice intersection product for equivariant fundamental classes. The various product formulas are given in the theorem below. 

\begin{theorem}[Intersection product]\label{prod}Let $X$ be a $C_2$-manifold and suppose $Y$, $Z$ are two $C_2$-submanifolds of $X$ that intersect transversally. Let $W=Y\cap Z$. The following formulas hold for the product of the fundamental classes for $Y$ and $Z$:
\begin{itemize}
\item[(i)] If $Y$ and $Z$ are free, then $[Y]_s\smile [Z]_r=[W]_{r+s}$.
\item[(ii)] If $Y$ and $Z$ are nonfree and $W$ is free, then $[Y]\smile [Z] = [W]_{q_Y+q_Z}$ where $q_Y$ and $q_Z$ are the weights of the classes $[Y]$ and $[Z]$, respectively.
\item[(iii)] If $Y$ is nonfree and $Z$ is free, then $[Y] \smile [Z]_r=[W]_{q_Y+r}$.
\item[(iv)] If $Y$, $Z$, and $W$ are nonfree, then $[Y]\smile [Z]=\tau^{q_Y+q_Z-q_W}[W]$.
\end{itemize}
\end{theorem}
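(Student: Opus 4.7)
The plan is to reduce the theorem to a multiplicativity statement for Thom classes of $C_2$-vector bundles under direct sum, combined with the uniqueness properties from Theorems~\ref{thomnonfree} and~\ref{thomfree}. The essential geometric input is transversality, which gives an isomorphism of $C_2$-vector bundles
\[
N_W \;\cong\; N_Y|_W \oplus N_Z|_W
\]
over $W = Y \cap Z$. Working with relative classes and excising onto an equivariant tubular neighborhood of $W$ in $X$, I would identify $[Y]_{rel} \smile [Z]_{rel} \in H^{*,*}(X, X-W)$ with $\pi_W^*(u_{N_Y|_W}) \smile \pi_W^*(u_{N_Z|_W})$ in $H^{*,*}(N_W, N_W - 0)$, where $\pi_W\colon N_W \to W$ is the projection. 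The identification uses the naturality statements from Proposition~\ref{natnonfree} and property (v) of Theorem~\ref{thomfreeII} applied to the inclusions $W \hookrightarrow Y$ and $W \hookrightarrow Z$, together with the fact that on the tubular neighborhood the relative classes literally pull back the Thom classes of the corresponding normal bundles.

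Next I would establish the multiplicativity formula
\[
u_{N_Y|_W} \smile u_{N_Z|_W} \;=\; \tau^{\, q_Y + q_Z - q_W} \cdot u_{N_W}
\]
in the nonfree-$W$ setting of case (iv), and the analogous free-$W$ identity $u_{N_Y|_W} \smile u_{N_Z|_W} = u_{N_W,\, q_Y + q_Z}$ in case (ii). This reduces to a fiber computation: for a fixed point $w$ in a component $W_i \subset W^{C_2}$, write $(N_Y)_w \cong \R^{*,a_i}$ and $(N_Z)_w \cong \R^{*,b_i}$, so that $(N_W)_w \cong \R^{*,a_i + b_i}$ and $q_W^{(i)} = a_i + b_i$. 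By property (i) of Theorem~\ref{thomnonfree}, the product $u_{N_Y|_W} \smile u_{N_Z|_W}$ restricts over this fiber to $\tau^{(q_Y - a_i) + (q_Z - b_i)} = \tau^{q_Y + q_Z - q_W^{(i)}}$ times the fiber generator, while $u_{N_W}$ restricts to $\tau^{q_W - q_W^{(i)}}$ times the same generator. These match after multiplying by $\tau^{q_Y + q_Z - q_W}$, uniformly in $i$, and uniqueness in Theorems~\ref{thomnonfree} and~\ref{thomfree} (also applied on fibers over pairs of conjugate points) promotes the fiberwise equality to the stated identity of global classes. Cases (i) and (iii) proceed by the same fiber computation, now invoking uniqueness of the weight-$q$ free Thom classes from Theorem~\ref{thomfree}, and in case (i) the full invertibility of $\tau$ on $A_0$ trivializes the weight bookkeeping so that weights simply add.

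The main obstacle I anticipate is the mixed case (iii), where $u_{N_Y|_W}$ is a nonfree Thom class while $u_{N_Z|_W}$ is a free one. Uniqueness there must be obtained by restricting to both fixed fibers (where the nonfree theorem applies) and to pairs of conjugate fibers (where the free theorem applies) simultaneously, and one must check that these restrictions jointly pin down the product, with no residual ambiguity in a bidegree where neither uniqueness statement directly applies. Once the Thom-class multiplicativity is verified, pushing forward from $(X, X-W)$ to $(X, \emptyset)$ and applying the naturality statement Theorem~\ref{funnat} to the tubular neighborhood packages the result as the four identities in the theorem, with cases (i)--(iv) corresponding to the four combinations of free/nonfree for $Y$, $Z$, and their intersection $W$.
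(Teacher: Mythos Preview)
Your approach is essentially the same as the paper's: both reduce the theorem to a multiplicativity statement for Thom classes under direct sum of normal bundles, verified by restricting to fibers and invoking the uniqueness statements of Theorems~\ref{thomnonfree} and~\ref{thomfree}. The paper sets this up via a commutative diagram of Pontryagin--Thom collapse maps together with the external cup product, while you work with relative cup products and excision onto a tubular neighborhood of $W$; these packagings are standard and equivalent.

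One correction on your anticipated obstacle in case~(iii): since $Z$ is free, $W \subseteq Z$ is automatically free as well, so $W$ has no fixed points. By Proposition~\ref{natnonfree}(i), the pullback of the nonfree Thom class $u_{N_Y}$ along $W \hookrightarrow Y$ is already the \emph{free} Thom class $u_{N_Y|_W,\, q_Y}$, and the entire computation takes place over a free base. Uniqueness therefore comes entirely from Theorem~\ref{thomfree}, and the argument proceeds exactly as in case~(i); there is no mixing of free and nonfree Thom classes to manage.
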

\begin{proof}
As in the previous proof, let $A^{N_A}$ denote the Thom space for the normal bundle of a submanifold $A$ in $X$ and $p_A: X_+ \to A^{N_A}$ denote the Pontryagin-Thom collapse map. Observe $N_Y|_W\oplus N_Z|_W \cong N_W$ by the transversality assumption. Let $N_Y|_W\oplus^eN_Z|_W$ denote the external direct sum over $W\times W$ and consider the following commutative diagram in the homotopy category:
\begin{center}
\begin{tikzcd}
X_+\arrow[r,"\Delta"]\arrow[d, "p_W"left] & X_+\wedge X_+ \arrow[r,"p_Y\wedge p_Z"]&Y^{N_Y}\wedge Z^{N_Z}\\
W^{N_W}\cong W^{N_Y|_W\oplus N_Z|_W} \arrow[r,"j_*"]& (W\times W)^{N_Y|_W\overset{e}{\oplus} N_Z|_W}\arrow[r,"\cong"]& W^{N_Y|_W}\wedge W^{N_Z|_W}\arrow[u,"(i_Y)_*\wedge (i_Z)_*" right]
\end{tikzcd}
\end{center}
In the above, the maps $(i_Y)_*$ and $(i_Z)_*$ are induced by the inclusions, and $j_*$ is the map induced by the diagonal map $W \to W \times W$. The homeomorphism shown on the bottom right can be described as follows. On the left, a point in the domain can be written as a pair $(w_1,w_2, \alpha_1+\alpha_2)$ where $w_i\in W$ and $\alpha_i\in (N_Y|_W)_{w_i}$. This pair is then sent to $(w_1, \alpha_1) \wedge (w_2,\alpha_2)$.

Now assume $Y$ has codimension $j$ and $Z$ has codimension $k$. Applying Bredon cohomology to the above and incorporating the external product gives the following commutative diagram:
\begin{center}
\begin{tikzcd}
&\H^{j,*}(X_+) \otimes \H^{k,*}(X_+) \arrow[ld] \arrow[d,"\mu"]& \arrow[l]\H^{j,*}(Y^{N_Y})\otimes \H^{k,*}(Z^{N_Z})\arrow[d,"\mu"]\\
\H^{j+k,*}(X_+) &\arrow[l,"\Delta_*"] \H^{j+k,*}(X_+\wedge X_+)&\arrow[l,"(p_Y\wedge p_Z)_*"]\H^{j+k,*}(Y^{N_Y}\wedge Z^{N_Z})\arrow[d,"(i_Y)_*\wedge (i_Z)_*" right]\\
\H^{j+k,*}(W^{N_W}) \arrow[u, "(p_W)_*"left]&\arrow[l,"j_*"] \H^{j+k,*}(W^{N_Y|_W\overset{e}{\oplus} N_Z|_W})& \arrow[l,"\cong"]\H^{j+k,*}(W^{N_Y|_W}\wedge W^{N_Z|_W})
\end{tikzcd}
\end{center}

To prove the various product formulas, we begin in the upper right corner and consider the image of the Thom classes as we go different ways around the diagram. We explain how to do this in case (iv) noting the other cases follow similarly. Consider $u_{N_Y}\otimes u_{N_Z}$. Going to the left and diagonally down yields the product $[Y] \smile [Z]$. Consider instead going down and over to the bottom left corner. We claim $u_{N_Y}\otimes u_{N_Z}\mapsto \tau^{q_Z+q_Y-q_W}u_{N_W}$. To see this, first observe $q_W\leq q_Z+q_Y$ because $N_W\cong N_Y|_W\oplus N_Z|_W$ and recall for a nonfree submanifold $A$, the integer $q_A$ is the largest weight representation appearing over the fixed set of $A$ in the normal bundle $N_A$. If we restrict to any fiber over a point in $W$, we see the class $u_{N_Y}\otimes u_{N_Z}$ must map to something that restricts to the nonzero class over the fiber. By the uniqueness of Thom classes, it must be that $u_{N_Y}\otimes u_{N_Z}$ maps to $\tau^{q_Y+q_Z-q_W}u_{N_W}$ in the bottom left corner. Mapping to the cohomology of $X$ then yields the desired formula. 
\end{proof}

\subsection{Examples} We finish this section with a series of examples.

\begin{example}[Fundamental class of a fixed point]
Suppose $X$ is a nonfree, closed, connected $C_2$-manifold and $x\in X^{C_2}$ is a fixed point. Let $D$ be a tubular neighborhood of this fixed point, so $D\cong \R^{n,k}$ for some $k$. Then there is a class $[x]\in H^{n,k}(X)$, and since this class forgets to the singular class $[x]\in H^{n}_{sing}(X)$, it is necessarily nonzero.

The classes for fixed points provide some insight into Theorem \ref{topm2}. Choose a point $x\in X^{C_2}$ that is in a component of $X^{C_2}$ of the smallest topological codimension $k$. Let $D\cong \R^{n,k}$ be a neighborhood of this point. It was shown in \cite[Corollary A.2]{Ha} that the map $q:X\to X/(X\setminus D)\cong S^{n,k}$ induces a split injection on Bredon cohomology. By Theorem \ref{funnat}, $q^*([q(x)])=[x]$. Now the class $[q(x)]\in H^{*,*}(S^{n,k})\cong \M_2 \oplus \Sigma^{n,k}\M_2$ forgets to something nonzero, so it must generate the free summand in bidegree $(n,k)$. Thus the class $[x]$ generates a free summand in bidegree $(n,k)$ in $H^{*,*}(X)$.
\end{example}

\begin{example}\label{s11ex} Consider the one-dimensional $C_2$-manifold $S^{1,1}$ whose cohomology is shown in Figure \ref{fig:s11ex}.
\begin{figure}[ht]
\begin{tabular}{m{0.5\textwidth} m{0.5\textwidth}}
\begin{center}
	\begin{tikzpicture}
		\draw (0,0) circle (1cm);
		\draw[blue] (0,1) node{\dot};
		\draw[blue] (0,1.3) node{\small{$a$}};
		\draw[blue] (0,-1) node{\dot};
		\draw[blue] (0,-1.3) node{\small{$b$}};
		\draw[<->] (-0.2,1.6)--(.2,1.6);
		\draw[white] (0,-2.5)--(.1,-2);
		\draw (0,-2) node{$S^{1,1}$};
	\end{tikzpicture}
\end{center}
&
	\begin{tikzpicture}[scale=.4]
		\draw[help lines,gray] (-5.125,-5.125) grid (5.125, 5.125);
		\draw[<->] (-5,0)--(5,0)node[right]{$p$};
		\draw[<->] (0,-5)--(0,5)node[above]{$q$};
		\cone{-0.1}{0}{black};
		\cone{1.1}{1}{black};
	\draw (0,-6.5)node{$H^{*,*}(S^{1,1})$};
	\end{tikzpicture}
\end{tabular}
\caption{The space $S^{1,1}$ and its cohomology.}\label{fig:s11ex}
\end{figure}
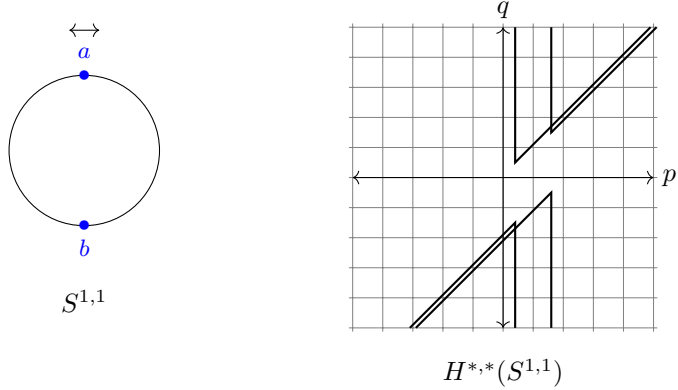

There are two fixed points, so there are two fundamental classes $[a]$, $[b]\in H^{1,1}(S^{1,1})$. By the previous example, $[a]$ and $[b]$ are both nonzero. We thus have three nonzero elements in $H^{1,1}(S^{1,1})$, namely $[a]$, $[b]$, and $\rho\cdot 1$. We would like to determine the dependence relation between these classes. 

Since $\psi([a])\neq 0$ and $\psi([b])\neq 0$, $[a]\neq \rho$ and $[b]\neq \rho$. From Corollary \ref{funfix}, under the map $i^*:\{a,b\} \hookrightarrow S^{1,1}$, $i^*([a])=\rho[a]$ and $i^*([b])=\rho[b]$. These classes are not equal in the cohomology of $\{a,b\}$, and thus not equal in the cohomology of $S^{1,1}$. Since we are working over $\Z/2$, the only possibility for a dependance relation is
	\[
	[a]+[b]+\rho = 0.
	\]
By Theorem \ref{prod}, $[a]\cdot[b] = [\{a\}\cap \{b\}]=0$. Using the dependance relation above we obtain $[a]^2=[a]([b]+\rho)=\rho[a]$, and we have recovered the following isomorphism of $\M_2$-algebras:
	\[
	H^{*,*}(S^{1,1})\cong \M_2[x]/(x^2=\rho x),~~|x|=(1,1).
	\]
\end{example}

\begin{example}\label{trefl} 
Consider the $C_2$-surface $X\cong S^{2,1}+[S^{1,0}-AT]$ which can be depicted as a torus with a reflection action as shown in Figure \ref{fig:trefl}. The fixed set is shown in {\textbf{\color{blue}{blue}}}. By Theorem \ref{nonfreeanswer}, the cohomology of $X$ is
	\[
	H^{*,*}(X)\cong \M_2 \oplus \Sigma^{1,0}\M_2 \oplus \Sigma^{1,1}\M_2 \oplus \Sigma^{2,1}\M_2
	\]
as shown on the grid.
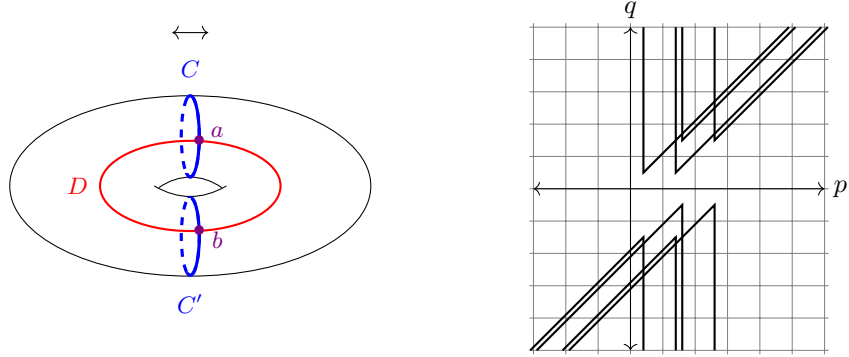
\begin{figure}[ht]
\begin{tabular}{m{0.5\textwidth} m{0.5\textwidth}}
\begin{center}
\begin{tikzpicture}[scale=1.2]
	\draw[<->] (-.2,1.7)--(.2,1.7);
	\draw (0,0) ellipse (2cm and 1cm);
	\draw (-0.4,0) to[out=330,in=210] (.4,0) ;
	\draw (-.35,-.02) to[out=35,in=145] (.35,-.02);
	\draw[very thick,blue] (0,.55) ellipse (.1cm and 0.45cm);
	\draw[white,fill=white] (-.3,0.15) rectangle (0,.96);
	\draw[very thick,blue,dashed] (0,.55) ellipse (.1cm and 0.45cm);
	\draw[very thick,blue] (0,-.56) ellipse (.1cm and 0.43cm);
	\draw[white,fill=white] (-.3,-0.95) rectangle (0,-.14);
	\draw[very thick,blue,dashed] (0,-.56) ellipse (.1cm and 0.43cm);
	\draw[thick,red] (0,0) ellipse (1cm and 0.5cm);
	\draw[violet] (0.1,.5)node{\dot};
	\draw[violet] (0.3,.6)node{\small{$a$}};
	\draw[violet] (0.1,-.5)node{\dot};
	\draw[violet] (0.3,-.6)node{\small{$b$}};
	\draw[blue] (0,1.3) node{\small{$C$}};
	\draw[blue] (0,-1.3) node{\small{$C'$}};
	\draw[red] (-1.25,0) node{\small{$D$}};
\end{tikzpicture}\end{center}&
\begin{center}
\begin{tikzpicture}[scale=.43]
\draw[help lines,gray] (-3.125,-5.125) grid (6.125, 5.125);
\draw[<->] (-3,0)--(6,0)node[right]{$p$};
\draw[<->] (0,-5)--(0,5)node[above]{$q$};
	\cone{-0.1}{0}{black};
	\cone{1.1}{1}{black};
	\cone{0.9}{0}{black};
	\cone{2.1}{1}{black};
\end{tikzpicture}
\end{center}
\end{tabular}
\caption{The space $X$ and its cohomology.}\label{fig:trefl}
\end{figure}

Notice $[C]$, $[C']\in H^{1,1}(X)$, $[D]\in H^{1,0}(X)$, and $[a]$, $[b] \in H^{2,1}(X)$. All of these fundamental classes forget to nonzero singular classes, so they must be nonzero. Now in $H^{1,1}(X)$, we have four nonzero elements $[C]$, $[C']$, $\tau[D]$, and $\rho$. Let's determine the dependence relations between these classes.

The classes $[C]$, $[C']$, and $\tau[D]$ forget to nonzero classes and so cannot equal $\rho$. Since $C$ and $C'$ are in different components of the fixed set, we can conclude from Corollary \ref{funfix} that they restrict to different classes under the inclusion of the fixed set. Thus $[C]\neq [C']$. If we forget to singular cohomology then $[C]_{sing}=[C']_{sing}$, so by the forgetful long exact sequence, it must be that $[C]+[C']=\rho$. We conclude $\{[C], \rho, \tau[D]\}$ forms a basis for $H^{1,1}(X)$. 

Using Theorem \ref{prod}, we have the following multiplicative relations
	\[
	[C][C']=0,~~[C][D]=[a],~~[C'][D]=[b],~~~[C]^2=[C]([C']+\rho)=\rho\cdot[C].
	\]
Let $x=[C]$ and $y=[D]$. We have shown as an $\M_2$-algebra
	\[
	H^{*,*}(X)\cong \M_2[x,y]/(x^2=\rho x, y^2=0),~~~|x|=(1,1),|y|=(1,0).
	\]
\end{example}

\begin{example}\label{rp2twex}
Consider the $C_2$-surface $Y$ whose underlying space is the projective plane. To describe the action, recall we can form $\R P^2$ by identifying antipodal points on the boundary of the disk. The space $Y$ will inherit the action from the rotation action on the disk, as depicted in Figure \ref{fig:rp2twex}. The fixed set is again shown in blue. The cohomology of this space is
	\[
	H^{*,*}(Y) \cong \M_2 \oplus \Sigma^{1,1}\M_2 \oplus \Sigma^{2,1}\M_2.
	\]
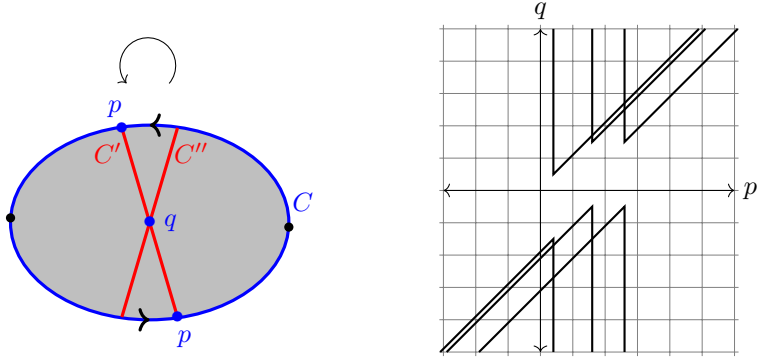
\begin{figure}[ht]
	\begin{subfigure}[b]{0.45\textwidth}
	\centering
	\begin{tikzpicture}[scale=1.85]
		\draw[blue,very thick, fill=lightgray, decoration={markings, mark=at position 0 with {\arrow[black,scale=.5]{*}}, mark=at position 0.25 with {\arrow[black]{>}}, mark=at position 0.5 with {\arrow[black,scale=.5]{*}}, mark=at position 0.75 with {\arrow[black]{>}}}, postaction={decorate}] (0,0) ellipse (1 cm and .7 cm);
		\draw[red, very thick] (-.2,.68)--(.2,-.68);
		\draw[blue] (-.2,.68) node{$\bullet$};
		\draw[blue] (-.25,.82) node{$p$};
		\draw[red] (-.3,.5) node{$C'$};
		\draw[red, very thick] (.2,.68)--(-.2,-.68);
		\draw[blue] (.2,-.68) node{$\bullet$};
		\draw[blue] (.25,-.84) node{$p$};
		\draw[red] (.3,.5) node{$C''$};
		\draw[blue] (0,0)node {$\bullet$};
		\draw[->] (.14,1) arc (-40:220:.20cm);
		\draw[blue] (.15,0) node{$q$};
		\draw[blue] (1.1,.15) node{$C$};
	\end{tikzpicture}
	\end{subfigure}
	\begin{subfigure}[b]{0.45\textwidth}
	\centering
	\begin{tikzpicture}[scale=.43]
		\draw[help lines,gray] (-3.125,-5.125) grid (6.125, 5.125);
		\draw[<->] (-3,0)--(6,0)node[right]{$p$};
		\draw[<->] (0,-5)--(0,5)node[above]{$q$};
		\cone{-0.1}{0}{black};
		\cone{1.1}{1}{black};
		\cone{2.1}{1}{black};
	\end{tikzpicture}
	\end{subfigure}
\caption{The space $Y$ and its cohomology.}\label{fig:rp2twex}
\end{figure}

Let's consider the submanifold $C$ and its normal bundle $N_C$. The circle $C$ is fixed, and for every $x\in N_C$, the fiber is given by $(N_C)_x\cong \R^{1,1}$. Thus we have a class $[C]\in H^{1,1}(Y)$. The submanifold $C'$ has two fixed points $p$, $q$, and $(N_{C'})_p\cong \R^{1,0}$ while $(N_{C'})_q\cong \R^{1,1}$. Thus we also have a class $[C']\in H^{1,1}(Y)$ and similarly a class $[C'']\in H^{1,1}(Y)$. It is clear $[C']=[C'']$. By considering neighborhoods, we also see $[p]\in H^{2,1}(Y)$ while $[q]\in H^{2,2}(Y)$. 

We have the following multiplicative relations from Theorem \ref{prod}:
	\[
	[C][C']=\tau\cdot[C\cap C']=\tau[p],\quad [C']^2=[C'][C'']=[C'\cap C'']=[q], \quad  \text{ and}
	\]
	\[
	[p][C']=[p][C'']=[\{p\}\cap C'']=0.
	\]
Under the inclusion of fixed points $i:Y^{C_2}\hookrightarrow Y$, $i^*(\tau[p])\neq i^*([q])$ by Corollary \ref{funfix}. We show $[q]=\tau[p]+\rho[C']$.

Note $[C]$ and $[C']$ both forget to the same nonequivariant nonzero class, so in particular, $[C]\neq\rho$ and $[C']\neq \rho$. Also since $[C][C']=\tau[p]\neq[q]$ while $[C']^2=[q]$, we see that $[C]\neq[C']$. Thus it must be that $[C]=[C']+\rho$, and this also shows $[q]=[C']^2=[C']([C]+\rho)=\tau[p]+\rho[C']$. 

Taking $x=[C']$, $y=[p]$, we can now state the cohomology of $Y$ as an $\M_2$-algebra:
	\[
	H^{*,*}(Y)\cong \M_2[x,y]/(x^2=\tau y+\rho x, y^2=0, xy=0),~~|x|=(1,1), |y|=(2,1).
	\]
\end{example}

\begin{example}[Fundamental class of conjugate points]\label{freepoint}
Let $X$ be a closed, connected, $n$-dimensional $C_2$-manifold with a non-fixed point $x\in X$. Consider the set of conjugate points $\{x,\sigma x\}$ and note this is isomorphic to the free orbit $C_2$. We show if $X$ is free, then $[x,\sigma x]_q\neq 0$ for all $q$. If $X$ is nonfree, let $p\in X^{C_2}$ be a fixed point whose fundamental class generates a free summand in bidegree $(n,k)$. We show $[x,\sigma x]_q\neq 0$ only for $q\leq k-2$, and explicitly, $[x,\sigma x]_q=\frac{\theta}{\tau^{q-k+2}}[p]$.

If $X$ is free, then the quotient map $\pi:X\to X/C_2$ is a map of smooth manifolds. By Theorem \ref{funnat}, $\pi^*([\pi(x)])=[x,\sigma x]_0$ and this class is nonzero as described in the proof of Lemma \ref{freefun}. 

For the nonfree case, we use the long exact sequence for the pair $(X, X-\{x , \sigma x\})$ to determine when the class is nonzero. Observe the space $X-\{x,\sigma x\}$ is a punctured $n$-manifold, so $H^{j}_{sing}(X-\{x,\sigma x\})=0$ for $j\geq n$. Using the forgetful long exact sequence, we see that 
	\[
	\rho:H^{j,q}(X-\{x,\sigma x\}) \to H^{j+1,q+1}(X-\{x,\sigma x\})
	\]
must be an isomorphism whenever $j\geq n$ and surjective when $j=n-1$. If $X$ is nonfree, then $X-\{x,\sigma x\}$ is also nonfree. Considering the structure theorem and the properties of the $\rho$ action, it must be that no summands in $H^{*,*}(X-\{x,\sigma x\})$ are generated in topological dimension $j$ for $j\geq n$ and all antipodal summands $\Sigma^{s,0}A_r$ must be concentrated in topological dimension less than $n$. Thus there is a sufficiently small $\ell$ such that $H^{n,q}(X-\{x,\sigma x\})=0$ whenever $q\leq \ell$. 

Fix $q$ such that $q\leq \ell$ and $q\leq k-2$. Consider the long exact sequence below:
\begin{center}
	\begin{tikzcd}
	\arrow[r]&H^{n,q}(X,X-\{x,\sigma x\}) \arrow[r]& H^{n,q}(X) \arrow[r] & H^{n,q}(X-\{x,\sigma x\})\arrow[r]&~
	\end{tikzcd}
\end{center}
The left group is $\Z/2$, and since the second map is surjective, there must be at most one nonzero element in $H^{n,q}(X)$. One such element is $\frac{\theta}{\tau^{q-k+2}}[p]$, and so the only option is $[x,\sigma x]_q = \frac{\theta}{\tau^{q-k+2}}[p]$. Now if this holds for some $q$, then it must hold for all $q\leq k-2$ by the action of $\tau$. Lastly for $q>k-2$, 
	\[
	[x,\sigma x]_{q}=\tau^{q-(k-2)}[x,\sigma x]_{k-2}=\tau^{q-(k-2)}\cdot \theta[p]=0.
	\]
\end{example}

\begin{example}\label{freetorex} 
Consider the free torus whose action is orientation reversing; this space was denoted $T^{anti}_1$ earlier in the paper. For notational simplicity, let $Z=T^{anti}_1$. An illustration of the space and its cohomology are shown in Figure \ref{fig:freetorex}. 
\begin{figure}[ht]
	\begin{subfigure}[b]{0.45\textwidth}
	\centering
	\includegraphics[scale=0.7]{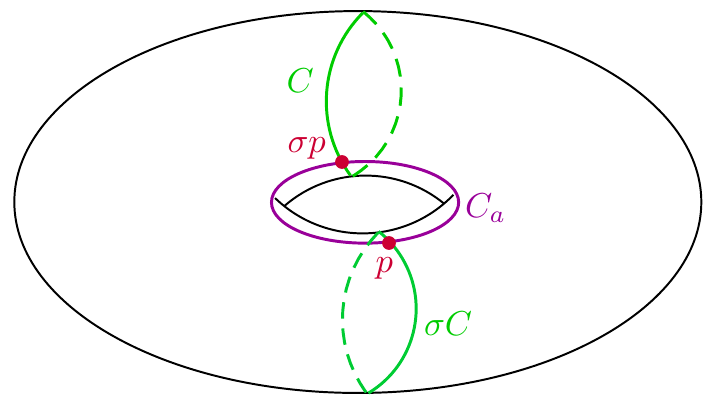}
	\vspace{0.3in}
	\end{subfigure}
	\begin{subfigure}[b]{0.45\textwidth}
	\centering
	\begin{tikzpicture}[scale=.43]
		\draw[help lines,gray] (-3.125,-5.125) grid (6.125, 5.125);
		\draw[<->] (-3,0)--(6,0)node[right]{$p$};
		\draw[<->] (0,-5)--(0,5)node[above]{$q$};
		\anti{-0.1}{1}{black};
		\anti{1.1}{1}{black};
	\end{tikzpicture}
	\end{subfigure}
	\caption{The space $T_1^{anti}$ and its cohomology.}
	\label{fig:freetorex}
\end{figure}

There are four families of cohomology classes of interest: $[C_a]_q$, $[C\sqcup\sigma C]_q$, \linebreak$[p\sqcup \sigma p]_q$, and $[Z]_q$. Observe the classes $[C_a]_q$ and $[Z]_q$ forget to nonzero classes in singular cohomology, so $[C_a]_q$ and $[Z]_q$ are nonzero for all $q$. Using the long exact sequence for the pair $(Z,Z-(C\sqcup \sigma C))$, one can check $[C\sqcup\sigma C]_q\neq 0$ for all $q$. Now $\psi([C\sqcup\sigma C]_q)=0$ so it must be that $[C\sqcup \sigma C]_q$ is in the image of $\rho$, and the only possibility is that $[C\sqcup \sigma C]_q = \rho \cdot [Z]_{q-1} = \rho\tau^{q-1}\cdot 1$. By perturbing $C$ we can find a submanifold $C'\sqcup \sigma C'$ such that $[C'\sqcup \sigma C']_q=[C\sqcup \sigma C]_q$ and the transverse intersection $(C'\sqcup \sigma C')\cap (C\sqcup \sigma C)$ is empty. 

From Theorem \ref{prod} we have that $[C_a]_r\cdot[C\sqcup \sigma C]_s=[p\sqcup \sigma p]_{r+s}$ for all $r$, $s$. This also follows from the module structure and the above fact that $[C\sqcup\sigma C]_1=\rho$, but it is nice to be able to recover the relation using fundamental classes.

The action of $\tau$ on the cohomology of $X$ is invertible, and it is easier to describe the cohomology as a $\tau^{-1}\M_2$-algebra. Note this also encodes the $\M_2$-algebra structure. As a $\tau^{-1}\M_2$-algebra we have recovered the following isomorphism where $x=[C_a]$:
	\[
	H^{*,*}(T^{anti}_1)\cong \tau^{-1}\M_2[x]/(\rho^2\cdot 1=0, x^2=0),~|x|=(1,1).
	\]
\end{example}

\begin{example}\label{tspit}
We do one more example that has both nonfree and free fundamental classes. Consider $X=S^{2,2}\#_2 T_1$ which can be depicted as a genus two torus with a rotation action, as shown below. The cohomology of this space is given by
	\[
	H^{*,*}(X) \cong \M_2 \oplus \left(\Sigma^{1,0}A_0\right)^{\oplus 2} \oplus \Sigma^{2,2}\M_2
	\]
which is shown in the grid below. 
\begin{figure}[ht]
	\begin{subfigure}[b]{0.45\textwidth}
	\centering
		\includegraphics[scale=0.77]{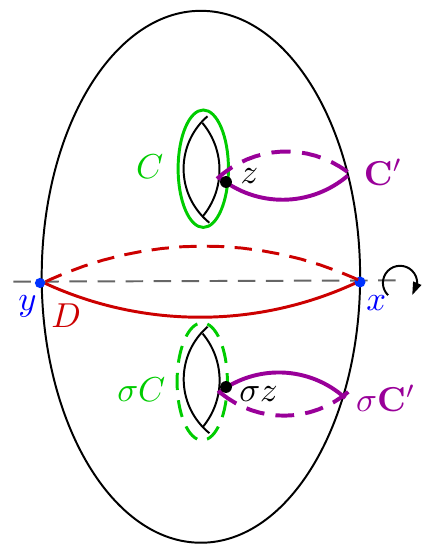}
	\end{subfigure}
	\begin{subfigure}[b]{0.45\textwidth}
	\centering
	\begin{tikzpicture}[scale=.43]
		\draw[help lines,gray] (-3.125,-5.125) grid (6.125, 5.125);
		\draw[<->] (-3,0)--(6,0)node[right]{$p$};
		\draw[<->] (0,-5)--(0,5)node[above]{$q$};
		\cone{0}{0}{black};
		\cone{2.1}{1.9}{black};
		\lab{1.1}{2}{black};
		\anti{1.1}{0}{black};
	\end{tikzpicture}
	\end{subfigure}
\end{figure}

Let's consider the nonfree fundamental classes $[D]\in H^{1,1}(X)$, $[x],[y]\in H^{2,2}(X)$, and the free fundamental classes, $[C\sqcup \sigma C]_q, [C' \sqcup \sigma C']_q\in H^{1,q}(X)$ and \linebreak $[z\sqcup \sigma z]_q\in H^{2,q}(X)$. Note $[C\sqcup \sigma C]_q$ and $[C'\sqcup \sigma C']_q$ forget to different nonzero classes, so both are nonzero, and they are not equal. These two families of classes therefore give rise to the two $\Sigma^{1,0}A_0$ summands appearing. One can also check $[D]=\rho$ and $[x]+[y]=\rho^2=\rho\cdot[D]$ using arguments as in the previous examples. 

From Example \ref{freepoint}, $[z\sqcup \sigma z]_q=\frac{\theta}{\tau^{q}}[p]$ for $q\leq 0$. This gives the multiplicative relation 
	\[[C\sqcup \sigma C]_r\smile[C' \sqcup \sigma C']_s=[z \sqcup \sigma z]_{r+s}=\tfrac{\theta}{\tau^{r+s}}[p]
	\] 
for $r+s\leq 0$. 
\end{example}

\section{Background on \texorpdfstring{$C_2$}{C2}-surfaces}\label{ch:backsurf}
In \cite{D2} all $C_2$-surfaces were classified up to equivariant isomorphism, and furthermore, a language was developed for describing the $C_2$-structure on a given equivariant surface. We review some of this language and the necessary parts of the classification. We then review the Bredon cohomology of all nonfree $C_2$-surfaces in $\underline{\Z/2}$-coefficients which was computed in \cite{Ha}. We begin with a few constructions. 

\begin{definition}\label{consumdef} 
Let $X$ be a nontrivial $C_2$-surface and $Y$ be a nonequivariant surface. We can form the \textbf{equivariant connected sum} of $X$ and $Y$ as follows. Let $Y'$ denote the space obtained by removing a small disk from $Y$. Let $D$ be a disk in $X$ that is disjoint from its conjugate disk $\sigma D$ and let $X'$ denote the space obtained by removing both of these disks. Choose an isomorphism $f:\partial Y' \to \partial D$. Then the equivariant connected sum is given by
\[\left(Y' \times \{0\}\right) \sqcup \left(Y' \times \{1\} \right)\sqcup X']/\sim\]
where $(y,0) \sim f(y)$ and $(y,1) \sim \sigma(f(y))$ for $y\in \partial Y'$. We denote this space by $X \#_2 Y$.
\end{definition}

\begin{rmk}\label{specialconnsum} There are two important examples of equivariant connected sums that warrant their own notation. These occur when $X$ is one of $S^{2,2}$ or $S^{2,1}$, the two nonfree, nontrivial equivariant spheres. We refer to such spaces as \textbf{doubling spaces}, and denote the space $X\#_2 Y$ by Doub$(Y,1:S^{1,1})$ or Doub$(Y,1:S^{1,0})$, respectively. The phrase ``doubling" is used to acknowledge that nonequivariantly $X\#_2Y$ is homeomorphic to $Y\# Y$. 
\end{rmk}

The next two constructions involve removing conjugate disks in order to attach an equivariant handle. There are two types of handles that can be attached. The first handle is given by $S^{1,1}\times D(\R^{1,1})$, where $D(\R^{1,1})$ is the unit disk in $\R^{1,1}$; we will refer to such a handle as an ``$S^{1,1}-$antitube". The second type of handle is given by $S^{1,0}\times D(\R^{1,1})$; we refer to this handle as an ``$S^{1,0}-$antitube". We give the precise definitions below.

\begin{definition} Let $X$ be a nontrivial $C_2$-surface. Form a new space denoted $X+[S^{1,0}-AT]$ as follows. Let $D$ be a disk contained in $X$ that is disjoint from its conjugate disk $\sigma D$. Remove both disks from $X$ and then attach an $S^{1,0}-$antitube. Whenever we construct such a space, we say we have done \textbf{$\mathbf{S^{1,0}-}$surgery}.
\end{definition}

We can similarly define \textbf{$\mathbf{S^{1,1}-}$surgery} by instead attaching an $S^{1,1}-$antitube. 

The last type of surgery we recall involves removing a disk isomorphic to $D(\R^{2,2})$ the unit disk in $\R^{2,2}$ and sewing in an equivariant M\"obius band. This equivariant M\"obius band can be formed as follows. Begin with the nonequivariant M\"obius bundle over $S^1$ and then define an action on the fibers by reflection. In other words, each fiber should be isomorphic to $\R^{1,1}$ (in particular, the zero section is fixed). If we now take the closed unit disk bundle, note the boundary is a copy of $S^1_a$, as is the boundary of the removed $D(\R^{2,2})$. An illustration of this M\"obius bundle is shown below. Conjugate points are indicated by matching symbols, while the fixed set is shown in {\textbf{\color{blue}{blue}}}. 
\begin{figure}[ht]
	\includegraphics[scale=0.6]{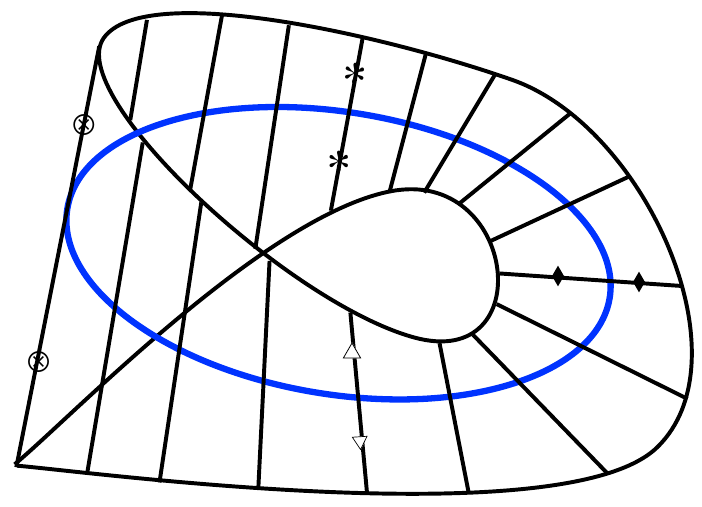}
\end{figure}

\begin{definition} 
Let $X$ be a nontrivial $C_2$-surface that contains an isolated fixed point $p$. Then there exists an open disk $p\in D\subset X$ such that $D\cong D(\R^{2,2})$. Remove $D$ and note $\partial D \cong S^{1}_a$. Now sew in a copy of the M\"obius band described above. We denote this new space by $X+[FM]$ and refer to this surgery as \textbf{$\mathbf{FM-}$surgery}. (Note ``FM" is an abbreviation for ``fixed point to M\"obius band".)
\end{definition}
\medskip 

\subsection{Invariants} 
When discussing $C_2$-surfaces, we will often need to refer to certain invariants. The notation for these invariants is given below.

\begin{notation} 
Let $X$ be a $C_2$-surface. Then
\begin{itemize}
	\item $F(X)$ denotes the number of isolated fixed points;
	\item $C(X)$ denotes the number of fixed circles; and
	\item $\beta(X)$ denotes the dimension of $H^{1}_{sing}(X;\Z/2)$ and will be referred to as the \emph{$\beta$-genus} of $X$.
\end{itemize}
When there is no ambiguity about which space is in discussion, we will just write $F$, $C$, and $\beta$ for the respective values. 
\end{notation}

The complete list of nonfree, nontrivial $C_2$-surfaces is given in \cite{D2}, but we only need the following takeaway for our discussion of fundamental classes.

\begin{theorem}\label{nonfreeclass} 
Let $X$ be a nonfree, nontrivial $C_2$-surface. If $X$ is not isomorphic to a doubling space or to an equivariant sphere, then $X$ can be obtained by doing $S^{1,0}-$, $S^{1,1}-$, or $FM-$ surgery to an equivariant space of lower $\beta$-genus. 
\end{theorem}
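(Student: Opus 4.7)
The plan is to argue by induction on the $\beta$-genus, leveraging the full classification of $C_2$-surfaces from \cite{D2}. The base cases are exactly the spaces excluded from the hypothesis: the two equivariant spheres $S^{2,1}$ and $S^{2,2}$ (which have $\beta = 0$) and the doubling spaces $\text{Doub}(Y,1:S^{1,0})$ and $\text{Doub}(Y,1:S^{1,1})$. The inductive content is the claim that every other nonfree, nontrivial $C_2$-surface admits a description as one of the three listed surgeries on a simpler surface.

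First, I would verify that each of the three surgeries strictly increases the $\beta$-genus. For $S^{1,0}$-surgery and $S^{1,1}$-surgery, the underlying nonequivariant surface is modified by removing two open disks and attaching a handle whose underlying space is $S^1 \times D^1$; this is precisely the operation of adding a nonequivariant handle, so the underlying genus (and hence $\beta$) increases. For $FM$-surgery, a disk around a fixed point is removed and replaced by a M\"obius band; nonequivariantly this glues in a crosscap, increasing $\beta$ by one. Consequently, whenever we exhibit $X$ as $X' + [S^{1,0}\text{-}AT]$, $X' + [S^{1,1}\text{-}AT]$, or $X' + [FM]$, we automatically have $\beta(X') < \beta(X)$.

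Next, I would invoke the classification in \cite{D2} to extract the essential structural statement: every nonfree, nontrivial $C_2$-surface is equivariantly isomorphic to one of the excluded base cases or to a space obtained by iterating the three surgery operations starting from one of these base cases. In Dugger's canonical presentation, every surface in the non-base class is already exhibited as $X' + [\text{surgery}]$ for some nontrivial $C_2$-surface $X'$; the only work is to verify that this $X'$ is indeed nontrivial (so that it has the conjugate disks or isolated fixed point required by the corresponding surgery operation). This follows because each surgery, as defined, requires ambient structure (a pair of disjoint conjugate disks or a fixed point sitting inside a $D(\mathbb{R}^{2,2})$) that is already present in every nontrivial $C_2$-surface with $\beta > 0$.

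The main obstacle is a case-by-case bookkeeping inspection of the classification list in \cite{D2}: the theorem carries no deep content beyond matching the three surgery operations defined in this section with the generating operations used in \cite{D2} to build the complete list of $C_2$-surfaces. A minor subtlety is handling surfaces that admit multiple inequivalent surgery decompositions, but since all three surgeries strictly increase $\beta$, any such decomposition produces a valid $X'$ of strictly smaller $\beta$-genus, so one may choose any presentation to carry out the induction.
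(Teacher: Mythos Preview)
Your proposal is essentially correct, but note that the paper does not actually prove this theorem: it is stated as a consequence of the classification in \cite{D2} (``The complete list of nonfree, nontrivial $C_2$-surfaces is given in \cite{D2}, but we only need the following takeaway\ldots''), with no argument supplied. Your sketch---checking that each of the three surgeries strictly raises $\beta$ and then reading off from Dugger's classification that every nonfree, nontrivial $C_2$-surface outside the excluded list appears with at least one surgery already performed---is exactly how one would extract the statement from \cite{D2}, and you correctly identify that the content is bookkeeping rather than a new idea.

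One small comment on presentation: framing this as ``induction on the $\beta$-genus'' is slightly misleading. The theorem only asserts that a single surgery can be undone to reach a surface of smaller $\beta$; there is no inductive hypothesis to invoke. The induction on $\beta$ is how the \emph{paper} later uses this theorem (in Section~\ref{ch:funclassessurf}), not how one proves it. Your actual argument does not use induction either---it verifies the $\beta$-increase for each surgery and then appeals directly to the normal forms in \cite{D2}---so you could drop the inductive framing without loss.
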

\medskip

\subsection{Bredon cohomology of nonfree $C_2$-surfaces}
We now state the Bredon cohomology of nonfree $C_2$-surfaces in constant $\underline{\Z/2}$-coefficients which was computed in \cite[Theorem 6.6]{Ha}. Note the answer depends only on the $\beta$-genus and the fixed set. 

\begin{theorem}\label{nonfreeanswer} 
Let $X$ be a nontrivial, nonfree $C_2$-surface. There are two cases for the $RO(C_2)$-graded Bredon cohomology of $X$ in \underline{$\Z/2$}-coefficients.
\begin{enumerate}
	\item[(i)] Suppose $C=0$. Then
		\[
		H^{\ast, \ast}(X; \underline{\Z/2})\cong \M_2 \oplus \left(\Sigma^{1,1} \M_2\right)^{\oplus F-2} \oplus \left(\Sigma^{1,0}A_0 \right)^{\oplus \frac{\beta-F}{2}+1} \oplus \Sigma^{2,2}\M_2.
		\]
	\item[(ii)] Suppose $C\neq 0$. Then 
		\begin{align*}
			H^{\ast, \ast}(X; \underline{\Z/2})\cong & ~\M_2 \oplus \left(\Sigma^{1,1} \M_2\right)^{\oplus F+C-1} \oplus  \left(\Sigma^{1,0} \M_2\right)^{\oplus C-1}\\
			&\oplus \left(\Sigma^{1,0}A_0 \right)^{\oplus \frac{\beta-F}{2}+1-C}\oplus \Sigma^{2,1}\M_2.
		\end{align*}
\end{enumerate}
\end{theorem}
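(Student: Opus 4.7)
The strategy is to induct on the $\beta$-genus of $X$, using the surgery-theoretic classification given in Theorem \ref{nonfreeclass} to pass between a surface of genus $\beta$ and one of smaller genus. First I would verify the base cases: the two equivariant spheres $S^{2,1}$ and $S^{2,2}$ and the family of doubling spaces $\text{Doub}(Y, 1:S^{1,\epsilon})$. For the spheres the formula follows directly from the suspension isomorphism together with the invariants $(F,C,\beta)=(2,0,0)$ and $(0,1,0)$ respectively. For a doubling space $S^{2,\epsilon}\#_2 Y$ one removes a free pair of disks in $S^{2,\epsilon}$ and glues in two copies of $Y$ minus a disk swapped by the action; this produces a Mayer--Vietoris sequence whose free-orbit interface is computed by Lemma \ref{times} and whose $Y$-side is a trivial space handled by Lemma \ref{trivial}, reducing the computation to singular cohomology of $Y$ together with $H^{*,*}(S^{2,\epsilon})$.

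For the inductive step I would treat each of the three surgery operations separately. Writing $X' = X + [S^{1,\epsilon}\text{-AT}]$ or $X' = X + [FM]$, the surgery decomposes $X'$ as a pushout of $X$ with its surgery region removed together with the antitube (respectively the equivariant M\"obius band), glued along a common boundary isomorphic to $C_2\times S^1$ (respectively $S^1_a$). The resulting Mayer--Vietoris long exact sequence relates $H^{*,*}(X')$ to $H^{*,*}(X)$ with correction terms computable from Lemmas \ref{trivial} and \ref{times} and the cohomology of the antipodal circle. In parallel, I would verify how each surgery changes the invariants: $S^{1,0}$-surgery leaves both $F$ and $C$ fixed while raising $\beta$ by $2$; $S^{1,1}$-surgery alters $C$ and $F$ in a controlled way depending on whether fixed circles are joined or split; and $FM$-surgery exchanges an isolated fixed point for a fixed circle while incrementing $\beta$ appropriately. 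A routine bookkeeping check then shows each change in $(F,C,\beta)$ produces exactly the predicted change in the formula.

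The main obstacle is solving the $\M_2$-module extension problems arising in these long exact sequences: the sequence determines the underlying groups but not a priori the module structure, and one must argue that each new summand takes the predicted form ($\Sigma^{1,0}A_0$, $\Sigma^{1,1}\M_2$, or $\Sigma^{1,0}\M_2$) rather than some $\Sigma^{m,0}A_n$ with $n\geq 1$ permitted by the general structure theorem \ref{structure}. The principal tools are Theorem \ref{topm2}, which pins down the unique top free $\M_2$-summand from the largest-dimension component of the fixed set and so forces $\Sigma^{2,2}\M_2$ in case (i) and $\Sigma^{2,1}\M_2$ in case (ii); the forgetful long exact sequence of Lemma \ref{forgetfulles}, which detects the $\rho$-action and distinguishes $\Sigma^{1,0}A_0$ from $\Sigma^{1,0}\M_2$ via whether a class is in the image of $\rho$; and comparison of total ranks in each bidegree to rule out higher $A_n$. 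With these constraints in place, the summand counts are determined by the singular Betti numbers and the invariants $F,C$, closing the induction.
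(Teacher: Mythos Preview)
This theorem is not proved in the present paper; it is quoted from \cite[Theorem 6.6]{Ha} as background (see the sentence immediately preceding the statement in Section~\ref{ch:backsurf}). So there is no ``paper's own proof'' to compare against here. Your overall strategy---induct on $\beta$-genus, handle spheres and doubling spaces as base cases, and step through the three surgery types via Mayer--Vietoris, resolving extensions with Theorem~\ref{topm2}, the forgetful long exact sequence, and the structure theorem---is a reasonable outline and is in the spirit of what \cite{Ha} does.

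That said, your bookkeeping on how the surgeries affect $(F,C,\beta)$ is wrong in several places, and since the induction hinges on matching these changes to the predicted change in the formula, this is a genuine gap. First, you have the sphere invariants swapped: $S^{2,1}$ has a fixed equatorial circle, so $(F,C,\beta)=(0,1,0)$, while $S^{2,2}$ has two isolated fixed points, so $(F,C,\beta)=(2,0,0)$. Second, $S^{1,0}$-surgery attaches $S^{1,0}\times D(\R^{1,1})$, whose fixed set is the core circle $S^{1,0}\times\{0\}$; since the removed conjugate disks are free, this surgery \emph{adds} a fixed circle, giving $C\mapsto C+1$, $F\mapsto F$, $\beta\mapsto\beta+2$ (cf.\ the relations used in the proof of Lemma~\ref{s10class}). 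Third, $S^{1,1}$-surgery attaches $S^{1,1}\times D(\R^{1,1})$, whose fixed set is two points; it does not join or split fixed circles but simply gives $F\mapsto F+2$, $C\mapsto C$, $\beta\mapsto\beta+2$. With the correct invariant changes substituted, the summand-counting check does go through, but as written your inductive step would not close.
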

\section{Fundamental classes for $C_2$-surfaces}\label{ch:funclassessurf}
It is straightforward to check the singular cohomology in $\Z/2$-coefficients of any surface is generated by fundamental classes. In the previous examples, we saw the analogous statement held for the Bredon cohomology of a handful of $C_2$-surfaces. In this section, we show, in fact, the Bredon cohomology of any $C_2$-surface is generated by fundamental classes.

\begin{notation} As before, the coefficients are understood to be $\underline{\Z/2}$ in this section. 
\end{notation}

We begin by defining the precise property we will be proving.

\begin{definition} 
Let $X$ be a $C_2$-manifold. Suppose there exist a (possibly empty) collection of nonfree equivariant submanifolds $Y_1,\dots Y_n$ and a (possibly empty) collection of free equivariant submanifolds $F_1,\dots,F_m$ such that the corresponding fundamental classes generate $H^{*,*}(X)$ as an $\M_2$-module, i.e.
	\[
	\M_2\{ [Y_{1}],\dots, [Y_{n}],[F_1]_q,\dots, [F_m]_q\colon q\in \Z  \} = H^{*,*}(X).
	\]
Then we say $H^{*,*}(X)$ is {\textbf{generated by fundamental classes}}.
\end{definition}

Our goal is to show if $X$ is any $C_2$-surface, then $H^{*,*}(X)$ is generated by fundamental classes. We begin with free surfaces, spheres, and doubling spaces, and then consider surfaces obtained by doing surgery to such spaces and apply Theorem \ref{nonfreeclass}.

\begin{warning}
One might hope the submanifolds $Y_i$ can always be chosen to be fixed. This has been the case in our examples thus far, but it is not true in general. For example, consider the torus with the rotation action that has four fixed points (this can also be described as $S^{2,2}+[S^{1,1}-AT]$). By Theorem \ref{nonfreeanswer} the cohomology of this space is
	\[
	\M_2 \oplus \left(\Sigma^{1,1}\M_2\right)^{\oplus 2} \oplus \Sigma^{2,2}\M_2.
	\]
We claim $\theta\cdot[F_j]=0$ for any free submanifold $F_j$, and thus no free submanifold can generate a free summand. To see this, recall $[F_j]$ is defined as the image of the Thom class of the normal bundle, and the Thom class of a bundle over a free space has $\theta$-torsion by Theorem \ref{thomfree} and Lemma \ref{freestructure}. On the other hand, the fixed set has codimension $2$, and thus no fixed submanifold can generate the free summands in topological degree $1$. In this case, we need classes that are neither free nor fixed in order to generate the cohomology. 
\end{warning}

\begin{lemma}\label{freefun} 
Suppose $X$ is a free $C_2$-surface. Then $H^{*,*}(X)$ is generated by fundamental classes.
\end{lemma}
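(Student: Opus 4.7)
The plan is to use the quotient double cover $\pi\colon X \to X/C_2$ to transport fundamental classes from the classical setting. Since $X$ is a free $C_2$-surface, $X/C_2$ is a closed (nonequivariant) surface, and $\pi$ is a smooth covering map. I will first pick submanifolds generating the singular cohomology of $X/C_2$ and then show their preimages in $X$ give fundamental classes generating $H^{*,*}(X)$ as an $\M_2$-module.

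For any closed surface $\Sigma$, the $\Z/2$ singular cohomology is generated by fundamental classes of smoothly embedded submanifolds: $[\Sigma]_{sing}$ in degree $0$, a point in degree $2$, and $\beta$ simple closed curves spanning $H^1$. Applying this to $\Sigma = X/C_2$, choose $Z_1,\ldots,Z_r \subset X/C_2$ whose fundamental classes form a $\Z/2$-basis of $H^*_{sing}(X/C_2;\Z/2)$, and set $F_i := \pi^{-1}(Z_i) \subset X$, which is a smooth free $C_2$-submanifold. Regarding $X/C_2$ as a $C_2$-space with trivial action, each $Z_i$ is a nonfree submanifold whose normal bundle has trivial $C_2$-action on fibers, so $[Z_i] \in H^{k_i,0}(X/C_2)$ where $k_i$ is the codimension of $Z_i$. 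Part (i) of Theorem \ref{funnat} then gives $\pi^*[Z_i] = [F_i]_0$, and for each $q \in \Z$ we have $[F_i]_q = \tau^q [F_i]_0$ by Corollary \ref{forget1}.

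To see that $\{[F_i]_q : 1\leq i \leq r,\ q\in\Z\}$ generates $H^{*,*}(X)$ as an $\M_2$-module, invoke Remark \ref{freeiso}:
\[
H^{*,*}(X) \;\cong\; \Z/2[\rho,\tau^{\pm 1}]\otimes_{\Z/2[u]} H^*_{sing}(X/C_2;\Z/2).
\]
Tracing through this isomorphism (which factors through $H^{*,0}_{Bor}(X) \cong H^*_{sing}(X/C_2;\Z/2)$ and is compatible with $\pi^*$) identifies $[F_i]_q$ with $\tau^q \otimes [Z_i]_{sing}$. Since $\tau$ acts invertibly on $H^{*,*}(X)$ and $\theta$ is $\tau$-torsion, the $\M_2$-action factors through the quotient $\M_2 \twoheadrightarrow \Z/2[\rho,\tau^{\pm 1}]$. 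Because $\{[Z_i]_{sing}\}$ forms a $\Z/2$-basis of $H^*_{sing}(X/C_2;\Z/2)$, every element of the right-hand side is a $\Z/2$-linear combination of elements $\rho^j \tau^k \otimes [Z_i]_{sing}$, hence lies in the $\M_2$-submodule generated by $\{[F_i]_q\}$. This establishes the lemma.

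The main technical obstacle is the identification $[F_i]_q \leftrightarrow \tau^q \otimes [Z_i]_{sing}$ under the isomorphism of Remark \ref{freeiso}; once this bookkeeping is verified from the construction in Section \ref{sec:borel}, everything else follows from the classical fact that the $\Z/2$ cohomology of a closed surface is spanned by fundamental classes. As a sanity check, this recovers the generating sets computed in Examples \ref{freetorex} and the free portion of Example \ref{tspit}: for the genus-one free torus one gets $[T^{anti}_1]_q$, $[C_a]_q$, a class realized as $[C \sqcup \sigma C]_q$, and $[p \sqcup \sigma p]_q$, matching the decomposition into two shifted copies of $A_0$ as an $\M_2$-module.
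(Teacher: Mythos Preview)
Your proposal is correct and follows essentially the same approach as the paper: both use the smooth quotient map $\pi\colon X\to X/C_2$, pull back a generating set of fundamental classes via Theorem \ref{funnat}, and invoke the isomorphism of Remark \ref{freeiso} (equivalently Remark \ref{freeboriso}) to conclude. Your version is in fact more explicit than the paper's, which simply asserts that ``the Bredon cohomology of $X$ is determined by the singular cohomology of $X/C_2$'' without spelling out the identification $[F_i]_q\leftrightarrow \tau^q\otimes[Z_i]_{sing}$.
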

\begin{proof} 
Since $X$ is free, the quotient map $\pi:X\to X/C_2$ is a smooth map of surfaces. Now $H^{*}_{sing}(X/C_2)$ is generated by fundamental classes of submanifolds, and let $Y_1,\dots,Y_j$ denote these submanifolds of $X/C_2$. By Theorem \ref{funnat}, $\pi^*[Y_i]=[\pi^{-1}(Y_i)]_0$ and we claim the classes for $\pi^{-1}(Y_i)$ generate the cohomology of $X$. Indeed, by Remark \ref{freeboriso},
	\[
	H^{*,*}_{Bor}(X) \cong H^{*}_{sing}(X/C_2) \underset{\Z/2[u]}{\otimes} \Z/2[\rho, \tau^{\pm 1}],
	\]
and so the Bredon cohomology of $X$ is determined by the singular cohomology of $X/C_2$. This completes the proof.
\end{proof}

\begin{lemma}\label{spherefun} 
Suppose $X$ is a $C_2$-sphere. Then $H^{*,*}(X)$ is generated by fundamental classes.
\end{lemma}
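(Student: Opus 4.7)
The plan is to enumerate the $2$-dimensional $C_2$-spheres and verify the claim in each case. Up to equivariant isomorphism these are the trivial sphere $S^{2,0}$, the two nontrivial nonfree spheres $S^{2,1}$ and $S^{2,2}$, and the free antipodal sphere $S^{2}_{a}$. The free case follows immediately from Lemma \ref{freefun}, so what remains is to treat $X=S^{2,k}$ for $k\in\{0,1,2\}$.

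For each such $X$ I would first identify the cohomology as $H^{*,*}(X) \cong \M_2 \oplus \Sigma^{2,k}\M_2$. When $k=0$ this follows from Lemma \ref{trivial} together with $H^{*}_{sing}(S^{2};\Z/2)\cong \Z/2\oplus \Sigma^{2}\Z/2$; when $k=1$, from Theorem \ref{nonfreeanswer}(ii) with invariants $(F,C,\beta)=(0,1,0)$; and when $k=2$, from Theorem \ref{nonfreeanswer}(i) with $(F,C,\beta)=(2,0,0)$. In the latter two cases the various other summand counts in the formulas all collapse to zero.

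The two proposed generators are then the unit $1=[X]\in H^{0,0}(X)$, regarded as the fundamental class of $X$ viewed as a $0$-codimensional submanifold of itself (which clearly generates the $\M_2$ summand), together with the fundamental class $[p]\in H^{2,k}(X)$ of any fixed point $p$, since a tubular neighborhood of $p$ is equivariantly $\R^{2,k}$. The earlier ``fundamental class of a fixed point'' discussion in Section \ref{ch:funclasses} applies directly: in each of our three spheres every component of $X^{C_2}$ has the same codimension $k$, so $p$ lies in a fixed component of smallest topological codimension, and hence $[p]$ generates a free summand in bidegree $(2,k)$. Matching against the decomposition above, this must be the $\Sigma^{2,k}\M_2$ summand, and so $\{1,[p]\}$ generate $H^{*,*}(X)$ as an $\M_2$-module.

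No real obstacle arises; the argument is essentially a bookkeeping exercise comparing the structure theorem for $H^{*,*}(X)$ against the tangent representation at a chosen fixed point. The one point requiring some care is that the weight $k$ produced by the fixed-point construction lines up with the shift on the second free summand, but this is automatic from the definition of $S^{2,k}$.
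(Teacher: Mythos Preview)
Your proposal is correct and follows essentially the same approach as the paper: enumerate the four $C_2$-spheres, dispatch $S^2_a$ via Lemma~\ref{freefun}, and for $S^{2,k}$ take $[X]$ and $[p]$ for a fixed point $p$. The paper's proof is terser and does not spell out the cohomology computations or invoke the fixed-point example explicitly, but the argument is the same.
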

\begin{proof} There are exactly four $C_2$-spheres up to equivariant isomorphism: $S^2_a$, $S^{2,0}$, $S^{2,1}$, and $S^{2,2}$. Note $S^2_a$ was handled in the above theorem. When $X$ is $S^{2,0}$, $S^{2,1}$, or $S^{2,2}$, we need only take the classes $[X]$ and $[p]$ where $p\in X$ is some fixed point. These classes will generate $H^{*,*}(X)$. 
\end{proof}

\begin{lemma}\label{doubfun} 
Suppose $X$ is a doubling space. Then $H^{*,*}(X)$ is generated by fundamental classes.
\end{lemma}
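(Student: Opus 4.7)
The plan is as follows. By Remark \ref{specialconnsum}, a doubling space has the form $X \cong S^{2,k} \#_2 Y$ for some $k \in \{1,2\}$ and connected nonequivariant surface $Y$; the fixed set of $X$ consists of two isolated points when $k=2$ and of a single fixed circle when $k=1$, and nonequivariantly $X \cong Y \# Y$, so $\beta(X) = 2\beta(Y)$. If $\beta(Y) = 0$ then $X$ is itself an equivariant sphere, already covered by Lemma \ref{spherefun}, so I may assume $\beta(Y) \geq 1$. Substituting the invariants $(F, C, \beta) = (2, 0, 2\beta(Y))$ or $(0, 1, 2\beta(Y))$ into Theorem \ref{nonfreeanswer} collapses the formula in both cases to
\[
H^{*,*}(X) \cong \M_2 \;\oplus\; \bigl(\Sigma^{1,0} A_0\bigr)^{\oplus \beta(Y)} \;\oplus\; \Sigma^{2,k}\M_2,
\]
so I need to produce generators for one $\M_2$ summand, one $\Sigma^{2,k}\M_2$ summand, and $\beta(Y)$ copies of $\Sigma^{1,0} A_0$.

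My candidate generators are $[X] = 1$, the class $[p]$ for a fixed point $p$ (one of the two isolated fixed points when $k=2$, or any chosen point on the fixed circle when $k=1$; in either case the normal bundle at $p$ is $\R^{2,k}$, so $[p] \in H^{2,k}(X)$), and, for a chosen basis $c_1, \dots, c_{\beta(Y)}$ of $H_1(Y;\Z/2)$ realized by loops lying inside one copy of $Y \setminus D$ inside $X$, the free submanifolds $F_i := c_i \sqcup \sigma c_i$, each equivariantly isomorphic to $C_2 \times S^1$. The class $1$ generates the $\M_2$ summand. Since $\psi([p])$ is the nonzero singular fundamental class of a point in a closed surface, the forgetful long exact sequence (Lemma \ref{forgetfulles}) forces $[p] \notin \operatorname{im}(\rho)$, so $[p]$ generates a free $\M_2$ summand, which in bidegree $(2,k)$ must be the unique $\Sigma^{2,k}\M_2$ summand.

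The heart of the argument is identifying the free classes with the $A_0$ summands. For every $q \in \Z$, Theorem \ref{thomfree} supplies a class $[F_i]_q \in H^{1,q}(X)$ satisfying $\tau[F_i]_q = [F_i]_{q+1}$; since $F_i \cong C_2 \times S^1$, Lemma \ref{times} shows $\rho$ acts trivially on $H^{*,*}(F_i)$, and this transfers through the free Thom isomorphism to give $\rho[F_i]_q = 0$ in $H^{*,*}(X)$. Each $[F_i]_q$ is nonzero because $\psi([F_i]_q) = [F_i]_{sing}$ is nonzero. Moreover, nonequivariantly $X \cong Y \# Y$ gives $H^1_{sing}(X) \cong H^1_{sing}(Y) \oplus H^1_{sing}(Y)$ via Mayer--Vietoris, and under this isomorphism the classes $\psi([F_i]_0)$ correspond to the images of a basis of $H^1_{sing}(Y)$ under the diagonal inclusion; these diagonal images are $\Z/2$-linearly independent, so by Lemma \ref{forgetfulles} the $[F_i]_0$ are linearly independent in $H^{1,0}(X)$. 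A direct inspection of the module decomposition above shows that $H^{1,0}(X)$ has $\Z/2$-dimension exactly $\beta(Y)$ (the $\Sigma^{2,k}\M_2$ summand contributes zero in this bidegree), so $\{[F_i]_0\}$ is a basis. Choosing a splitting in which the $i$-th $\Sigma^{1,0}A_0$ summand is generated by $[F_i]_0$, the relations $\tau[F_i]_q = [F_i]_{q+1}$ and $\rho[F_i]_q = 0$ identify the $\M_2$-submodule generated by $\{[F_i]_q : q \in \Z\}$ with that summand.

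The main obstacle is passing from $\Z/2$-linear independence in the single bidegree $(1,0)$ to a match of entire $\Sigma^{1,0}A_0$ summands across all weights, including the negative ones. This is handled by combining the existence of Thom classes $u_{N_i,q}$ for every $q \in \Z$ (Theorem \ref{thomfree}) with the $\rho$-triviality forced by freeness of $F_i$; together these ensure that each generator $[F_i]_0$ is accompanied in its $\M_2$-orbit by a full $A_0$-worth of classes, matching the predicted $\Sigma^{1,0}A_0$ summand exactly.
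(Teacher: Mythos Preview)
Your proof is correct and follows essentially the same approach as the paper's: exhibit $[X]$, $[p]$, and the free classes $[F_i]_q$ coming from a basis of $H^1_{sing}(Y)$ sitting in one copy of $Y\setminus D$, then use the forgetful map to verify linear independence in topological degree one. You add more detail than the paper does (the dimension count for $H^{1,0}(X)$, the explicit $\rho$-triviality, the uniform treatment of $k\in\{1,2\}$), which is fine.

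One minor imprecision: your final sentence asserts that the $\M_2$-submodule generated by $\{[F_i]_q:q\in\Z\}$ \emph{equals} the $i$-th $\Sigma^{1,0}A_0$ summand in your chosen splitting. This need not be literally true: for $k=2$ and $q\leq -1$ the group $H^{1,q}(X)$ contains a contribution $\tfrac{\theta}{\rho\tau^{-q-1}}[p]$ from the bottom cone of $\Sigma^{2,2}\M_2$, and since this element is $\tau$-torsion there is no reason the class $[F_i]_q$ must avoid it. What \emph{is} true (and is all you need) is that for every $q$ the classes $[F_i]_q$ remain linearly independent modulo $\M_2\cdot 1 + \M_2\cdot[p]$, because $\psi$ kills those free contributions in bidegree $(1,q)$ while $\psi([F_i]_q)=[F_i]_{sing}$ stay independent; hence the total $\M_2$-span of $\{1,[p],[F_i]_q\}$ is all of $H^{*,*}(X)$. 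The paper's proof makes exactly this leap with the phrase ``hence they must generate the $A_0$-summands,'' so you are in good company, but the cleaner formulation is about generation rather than summand identification.
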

\begin{proof} There are two cases, either $X\cong S^{2,2}\#_2 Y$ or $X\cong S^{2,1}\#_2 Y$. The proof for both cases is very similar, so we only provide a proof for the former.

Using Theorem \ref{nonfreeanswer}, the cohomology of $S^{2,2}\#_2 Y$ is given by
	\[
	H^{*,*}(X) \cong \M_2 \oplus \left(\Sigma^{1,0}A_0\right)^{\oplus \beta(Y)} \oplus \Sigma^{2,2}\M_2.
	\]
The class $[X]$ will generate the $\M_2$-summand appearing in bidegree $(0,0)$, while $[p]$ where $p\in X$ is any fixed point is a generator for the $\M_2$-summand appearing in bidegree $(2,2)$. Thus we must find $\beta(Y)$ one-dimensional submanifolds whose classes will generate the $A_0$-summands appearing in topological degree one. 

Let $n=\beta(Y)$ and $Y_1,\dots, Y_n$ be one-dimensional submanifolds of $Y$ that generate $H^{1}_{sing}(Y)$. Note we can choose these submanifolds so that they do not intersect the neighborhood removed from $Y$ to form $X$. In $X$ we will have submanifolds of the form $Y_i\times C_2$ for each $i$. The classes $[Y_1]_{sing}, \dots, [Y_n]_{sing}$ form a basis for $H^1_{sing}(Y)$, so the classes $[Y_1\times C_2]_{sing}, \dots, [Y_n\times C_2]_{sing}$ are linearly independent in $H^{1}_{sing}(X)$. From the forgetful map, these classes must be linearly independent in $H^{1,0}(X)$, and hence they must generate the $A_0$-summands in topological degree one. This completes the proof. 
\end{proof}

Our next goal is to show this property holds for all nonfree surfaces. To do so, we make use of Theorem \ref{nonfreeclass}, which states if $X$ is a nontrivial $C_2$-surface that is not free, not isomorphic to a sphere, and not isomorphic to a doubling space, then up to equivariant isomorphism, $X$ can be constructed by doing $S^{1,1}-$, $S^{1,0}-$, or $FM-$surgery to a $C_2$-surface of lower $\beta$-genus. We first prove a lemma that will be helpful in what follows. 

\begin{lemma}\label{includept} 
Let $X$ be a nonfree $C_2$-surface. Suppose $C\subset X$ is a one-dimensional submanifold such that $C\cong S^{1,1}$ and $[C]\in H^{1,1}(X)$. Let $q\in C$ be a fixed point. Then $\iota_q^*([C])=\rho$ where $\iota_q:\{q\} \hookrightarrow X$ is the inclusion map.
\end{lemma}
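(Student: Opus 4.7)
The plan is to compute $\iota_q^*([C])$ by factoring through the fixed set and applying Corollary \ref{funfix}. The inclusion $\iota_q$ decomposes as $\{q\}\hookrightarrow X^{C_2}\overset{i}{\hookrightarrow} X$, so I can first compute $i^*([C]) \in H^{1,1}(X^{C_2})$ and then restrict to $\{q\}$. Since $C\cong S^{1,1}$, the fixed set $C^{C_2}$ has exactly two points: the given $q$ and some second point $q'$, with normal-bundle weights $q_1,q_2$ satisfying $\max(q_1,q_2)=1$ by the hypothesis $[C]\in H^{1,1}(X)$. Corollary \ref{funfix} then yields
\[
i^*([C]) \;=\; \tau^{1-q_1}\rho^{q_1}\,[q]_{K_q} \;+\; \tau^{1-q_2}\rho^{q_2}\,[q']_{K_{q'}},
\]
where $K_q$ and $K_{q'}$ denote the respective components of $X^{C_2}$ containing $q$ and $q'$, and $[q]_{K_q}, [q']_{K_{q'}}$ are the fundamental classes of these points inside those components.

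The next step is to restrict both summands to $\{q\}$. The $q'$-summand is supported on a component of $X^{C_2}$ that does not contain $q$, so it restricts to zero. For the $q$-summand, I would identify the local structure at $q$ using the fact that $T_qC\cong\R^{1,1}$ (the tangent to $S^{1,1}$ at a fixed point), which forces $T_qX$, being a $2$-dimensional $C_2$-representation containing $T_qC$, to be either $\R^{2,1}$ or $\R^{2,2}$. In the case $T_qX\cong \R^{2,2}$—so that $q$ is an isolated fixed point of $X$—we have $q_1=1$ and $K_q=\{q\}$, giving $[q]_{K_q}=1\in H^{0,0}(\{q\})$; the $q$-summand then evaluates to $\tau^{0}\rho^{1}\cdot 1=\rho$, and we conclude $\iota_q^*([C])=\rho$.

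The hard part will be justifying that the geometric setup really does place us in the case $T_qX\cong\R^{2,2}$. In the alternative case where $q$ lies on a fixed curve of $X$ (so $T_qX\cong \R^{2,1}$ and $q_1=0$), the parallel computation via Corollary \ref{funfix} instead collapses to $\tau\cdot\iota_q^*([q]_{K_q})$ with $\iota_q^*([q]_{K_q})\in H^{1,0}(\text{pt})=0$, producing $0$ rather than $\rho$. The lemma therefore hinges on a local compatibility between $C\cong S^{1,1}$, the weight constraint $[C]\in H^{1,1}(X)$, and the structure of the ambient $C_2$-surface $X$ at $q$, which I expect to be extracted from how sign-representation circles sit inside the $C_2$-surfaces classified in Section \ref{ch:backsurf}.
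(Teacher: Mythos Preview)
Your approach is exactly the paper's: factor $\iota_q$ through $X^{C_2}$, apply Corollary~\ref{funfix}, and then restrict to $\{q\}$. The paper's proof simply asserts $i^*[C]=\rho[q]+\rho[q']$ and reads off the conclusion; this is precisely your case $q_1=q_2=1$, written without the case split.

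Your case analysis, however, is more careful than the paper's, and the worry you flag is real. The hypotheses $C\cong S^{1,1}$ and $[C]\in H^{1,1}(X)$ do \emph{not} force the chosen fixed point $q$ to be an isolated fixed point of $X$. Example~\ref{rp2twex} already exhibits the bad case: there $C'\cong S^{1,1}$ with $[C']\in H^{1,1}(Y)$, and the fixed point $p\in C'$ lies on the fixed circle $C$, so $T_pY\cong\R^{2,1}$ and the normal weight at $p$ is $0$. Using the relation $[C']=[C]+\rho$ from that example and the computation $\iota_p^*([C])=\rho$ (valid because $C$ is an entire fixed circle with normal weight $1$), one gets $\iota_p^*([C'])=\rho+\rho=0$, matching your computation in the $q_1=0$ branch. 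No classification argument from Section~\ref{ch:backsurf} will rescue this; the obstruction is genuine.

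The resolution is not a missing computation but a missing hypothesis: the lemma should additionally assume that $q$ is an isolated fixed point of $X$ (equivalently, that the normal fiber at $q$ is $\R^{1,1}$). With that hypothesis your $q$-summand gives $\rho\cdot 1=\rho$ and the argument is complete. Every invocation of the lemma in Section~\ref{ch:funclassessurf} either satisfies this extra hypothesis or applies the same Corollary~\ref{funfix} reasoning to a fixed circle rather than an $S^{1,1}$, so the downstream results are unaffected.
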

\begin{proof} 
Consider the map $i:X^{C_2}\hookrightarrow X$. Note $C^{C_2}$ consists of two isolated points; let $q$ and $q'$ denote these points. By Corollary \ref{funfix}, $i^*[C] = \rho[q]+\rho[q']$. The lemma then follows by noting $\iota_q$ can be factored $\{q\}\hookrightarrow X^{C_2}\hookrightarrow X$.
\end{proof}

We now investigate how doing surgery introduces new fundamental classes. 

\begin{lemma}\label{s10class} 
Let $Y$ be a closed $C_2$-surface such that $H^{*,*}(Y)$ is generated by fundamental classes. If $X=Y+[S^{1,0}-AT]$, then $H^{*,*}(X)$ is also generated by fundamental classes. 
\end{lemma}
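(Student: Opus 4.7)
The plan is to exhibit the new fundamental classes introduced by the $S^{1,0}$-surgery and to match them, together with the fundamental classes inherited from $Y$, against the module decomposition of $H^{*,*}(X)$ predicted by Theorem \ref{nonfreeanswer}.

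Write $A = S^{1,0} \times D(\R^{1,1})$ for the attached antitube and $Y' = Y \setminus (D \sqcup \sigma D) \subset X$. First, the core $C_0 = S^{1,0} \times \{0\}$ of $A$ is a new fixed circle in $X$ whose equivariant normal bundle is the trivial $\R^{1,1}$-bundle, so Theorem \ref{thomnonfree} produces $[C_0] \in H^{1,1}(X)$, and any point $p_0 \in C_0$ gives $[p_0] \in H^{2,1}(X)$. In the subcase $C(Y) > 0$, I construct a ``transverse'' circle $D_0$ by taking the two arcs $\{p_0\} \times D(\R^{1,1})$ and $\{p_1\} \times D(\R^{1,1})$ across the antitube (for distinct points $p_0, p_1 \in S^{1,0}$) and closing them up by a non-equivariant arc $\alpha \subset Y'$ from $\{p_0\} \times \{+1\}$ to $\{p_1\} \times \{+1\}$ together with its conjugate $\sigma\alpha$; the resulting circle $D_0 \cong S^{1,1}$ meets $C_0$ transversally in its two fixed points, each of which has normal-bundle weight $0$, so that $[D_0] \in H^{1,0}(X)$. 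In the subcase $C(Y) = 0$, the invariant count forces $F(Y) \geq 2$, and I instead use a ``meridional'' nonfree circle $D'$ through two isolated fixed points of $Y$; since each such tangent space is $\R^{2,2}$, the normal-bundle weight at each fixed point of $D'$ is $1$, giving $[D'] \in H^{1,1}(X)$.

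Next, I observe that every fundamental class generating $H^{*,*}(Y)$ persists in $H^{*,*}(X)$: by a small equivariant perturbation the corresponding submanifold may be chosen disjoint from $D \sqcup \sigma D$, so it embeds into $Y' \subset X$ with its normal bundle unchanged, and the naturality of the Thom class (Proposition \ref{natnonfree} for nonfree submanifolds and Theorem \ref{thomfreeII}(v) for free submanifolds) shows that the resulting fundamental class in $X$ restricts to the one in $Y$. Finally, generation is verified using the invariant changes $F(X) = F(Y)$, $C(X) = C(Y)+1$, $\beta(X) = \beta(Y)+2$ together with Theorem \ref{nonfreeanswer}. When $C(Y) > 0$ (both in Case (ii)), exactly one new $\Sigma^{1,1}\M_2$ and one new $\Sigma^{1,0}\M_2$ summand appear in $H^{*,*}(X)$, generated respectively by $[C_0]$ and $[D_0]$. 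When $C(Y) = 0$ (Case (i) to Case (ii)), the distinguished $\Sigma^{2,2}\M_2$ summand of $Y$ is replaced by $\Sigma^{2,1}\M_2$ in $X$ (generated by $[p_0]$) and two new $\Sigma^{1,1}\M_2$ summands appear, generated by $[C_0]$ and $[D']$. The intersection product formulas of Theorem \ref{prod} cross-check the compatibility of the new generators—for instance $[C_0] \smile [D_0] = [p_0] + [p_1]$ in the first subcase.

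The most delicate step is the transition in the $C(Y) = 0$ subcase, where the old free generator $[p] \in H^{2,2}(Y)$ of the $\Sigma^{2,2}\M_2$ summand is no longer a free generator in $H^{*,*}(X)$. One must express $[p]$ in $X$ in terms of $\tau[p_0]$ (coming from the new $\Sigma^{2,1}\M_2$) and a contribution from the new $\Sigma^{1,1}\M_2$ summand generated by $[D']$, for instance by applying the forgetful long exact sequence and the restriction maps to the fixed set (Corollary \ref{funfix}) to determine the precise relation.
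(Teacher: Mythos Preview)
Your overall plan matches the paper's, but the two ``second curve'' constructions are where the argument breaks down.

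In the $C(Y)>0$ subcase, your $D_0$ meets the core $C_0$ in \emph{two} points $p_0,p_1$ lying on the same fixed circle, so $[p_0]=[p_1]$ and hence $[C_0]\smile[D_0]=0$. This destroys the only mechanism you invoke for showing $[D_0]$ is independent of the classes inherited from $Y$. Worse, for a generic choice of the closing arc $\alpha\subset Y'$ the curve $D_0$ can be null-homotopic in $X$ (try $Y=S^{2,1}$: your $D_0$ bounds a disk in the resulting torus), so $[D_0]$ may simply vanish. The paper instead builds $C_\gamma$ from a path that starts on $C_0$ and ends on a \emph{different} fixed component of $X$; then $C_\gamma\cap C_0$ is a single point, $[C_0]\smile[C_\gamma]=[p]\neq 0$, while $[C_0]$ annihilates every old class (they miss the handle), forcing $[C_\gamma]$ to lie outside the span of the old classes. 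Independence of $[C_0]$ itself is then detected by restricting to a point of $C_0$ via Lemma~\ref{includept}.

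In the $C(Y)=0$ subcase, your $D'$ lives entirely in $Y$ and does not interact with the handle at all, so its class is already an $\M_2$-linear combination of the old generators in $H^{*,*}(Y)$; you give no reason why that relation should fail after passing to $X$. The paper's $C_\gamma$ again runs from the new fixed circle $C_0$ to an isolated fixed point of $Y$, and independence follows by the same intersection-with-$C_0$ trick. Your closing paragraph about ``expressing $[p]\in H^{2,2}$ in terms of $\tau[p_0]$'' is a red herring: one simply discards the old degree-$(2,2)$ generator and uses $[p_0]\in H^{2,1}(X)$ instead; nothing needs to be re-expressed.

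Two further gaps: you omit the case where $Y$ is free (then $C(Y)=0$ does \emph{not} force $F(Y)\geq 2$; in fact no new dimension-one classes are needed). And your persistence claim---that the fundamental class in $X$ ``restricts to the one in $Y$''---is not well-posed, since neither $X\to Y$ nor $Y\to X$ exists; the paper routes both through $Y'$ and proves $H^{1,q}(Y)\hookrightarrow H^{1,q}(Y')$ via the pushout description of $Y$, which is what actually guarantees no new relations among the old classes.
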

\begin{proof} The surface $X$ contains a fixed circle, so by Theorem \ref{nonfreeanswer}
	\[
	H^{*,*}(X)\cong \M_2 \oplus \Sigma^{2,1}\M_2 \oplus \dots
	\]
where the remaining modules appearing in the decomposition depend on $Y$. As usual, the class $[X]$ will generate the free summand appearing in bidegree $(0,0)$ while if $x\in X$ is any point contained in a fixed circle, the class $[x]$ will generate a free summand in bidegree $(2,1)$. We just need to find various one-dimensional submanifolds of $X$ that generate the remainder of the cohomology. The procedure to find such submanifolds will depend on the isomorphism type of $Y$.

Suppose $Y_1,\dots,Y_n$ are nonfree one-dimensional equivariant submanifolds of $Y$ and $F_1,\dots F_m$ are free one-dimensional submanifolds of $Y$ whose fundamental classes together with $[Y]$ and $[y]$ generate $H^{*,*}(Y)$. Let $U_1,\dots,U_n, V_1,\dots, V_m$ be equivariant tubular neighborhoods of $Y_1,\dots,Y_n, F_1, \dots F_m$, respectively. In order to form $X=Y+[S^{1,0}-AT]$, we must remove disjoint conjugate disks $D$, $\sigma D$ from $Y$, and we may assume without loss of generality that these disks and the tubular neighborhoods are chosen in a way such that $(D\cup \sigma D) \cap U_i$ and $(D\cup \sigma D)\cap V_j$ are empty for all $i,j$.

Let $Y'$ be the space obtained by removing $D\sqcup\sigma D$ from $Y$. We would like to relate the cohomology of $Y$ to the cohomology of $X$, and we will use the cohomology of the space $Y'$ as a stepping stone from $H^{*,*}(Y)$ to $H^{*,*}(X)$. 

Observe $Y$ and $X$ can be realized as the homotopy pushouts of the following diagrams, respectively:
\begin{center}
	\begin{tikzcd}
	\partial \left(D\sqcup \sigma D\right) \arrow[r,"\iota"] \arrow[d,"\pi"] & Y'\\
	C_2 &
	\end{tikzcd}\hspace{.5in}
	\begin{tikzcd}
	\partial \left(D\sqcup \sigma D\right) \arrow[r,"\iota"] \arrow[d,"\pi"] & Y'\\
	S^{1,0}  &
	\end{tikzcd}
\end{center}
Using the diagram on the left, for each $q$ we have a long exact sequence:
\vspace{-0.4cm}
\begin{center}
	\begin{tikzcd}[column sep=small]
	H^{0,q}(Y')\oplus H^{0,q}(C_2)\arrow[r]& H^{0,q}(\partial(D \sqcup \sigma D))\arrow[r]& H^{1,q}(Y) \arrow[r]&H^{1,q}(Y')\oplus H^{1,q}(C_2)
	\end{tikzcd}
\end{center}
The map $\pi^*:H^{0,q}(C_2)\to H^{0,q}(\partial(D\sqcup\sigma D))$ is an isomorphism because on the level of spaces
\[C_2\hookrightarrow \partial(D\sqcup\sigma D)\overset{\pi}{\to} C_2\]
is the identity, so $\pi^*$ is an injective map from $\Z/2$ to $\Z/2$. Thus the leftmost map is surjective, and the rightmost map is injective by exactness. Though $H^{1,q}(C_2)=0$ so the map $H^{1,q}(Y)\to H^{1,q}(Y')$ is injective. The inclusion $Y'\hookrightarrow X$ induces a map $H^{1,q}(X)\to H^{1,q}(Y')$; this map is often not injective, but for each submanifold $C$ from our list, we have the following commutative diagram:
\begin{center}
	\begin{tikzcd}
	H^{1,q}(Y,Y-C) \arrow[r,"\cong"] \arrow[d]& H^{1,q}(Y',Y'-C)\arrow[d] &H^{1,q}(X,X-C) \arrow[l,"\cong" above]\arrow[d]\\
	H^{1,q}(Y)\arrow[r, hook] & H^{1,q}(Y') & H^{1,q}(X) \arrow[l]
	\end{tikzcd}
\end{center}
Note the top horizontal maps are isomorphisms due to excision: all three of these groups are isomorphic to $H^{1,q}(U,U-C)$ where $U$ is the chosen tubular neighborhood of $C$ (note this is why we chose the disks and neighborhoods to be disjoint). The bottom left horizontal map is injective from the above discussion.

In particular, this commutative diagram holds for $C=Y_1,\dots, Y_n$ and $C=F_1,\dots, F_m$. Hence, the image of each of the classes $[Y_i]$ and $[F_j]_k$ in $H^{1,*}(X)$ under the right horizontal map is equal to the image of the respective class $[Y_i]$ or $[F_j]_k$ in $H^{1,*}(Y)$ under the left horizontal map. The injectivity of the left map shows the fundamental classes $[Y_{i}], [F_j]_k$ inherit no new relations in the cohomology of $X$ that were not present in the cohomology of $Y$. Intuitively, this is unsurprising: attaching a handle should not introduce dependence relations, and the above formalizes this intuition. 

There are three cases for how the cohomology of $X$ differs from the cohomology of $Y$. First, suppose $Y$ already contains a fixed oval. Then by Theorem \ref{nonfreeanswer}
\begin{align}\label{eq:yy}
	H^{*,*}(Y) \cong &~\M_2 \oplus \left(\Sigma^{1,0}\M_2\right)^{\oplus C(Y)-1} \oplus \left( \Sigma^{1,1}\M_2\right)^{\oplus F(Y)+C(Y)-1} \\
	&\oplus \left( \Sigma^{1,0} A_0\right)^{\frac{\beta(Y)+2-(F(Y)+2C(Y))}{2}} \oplus \Sigma^{2,1}\M_2 \nonumber
\end{align}
while
\begin{align*}
	H^{*,*}(X) \cong &~\M_2 \oplus \left(\Sigma^{1,0}\M_2\right)^{\oplus C(X)-1} \oplus \left( \Sigma^{1,1}\M_2\right)^{\oplus F(X)+C(X)-1} \\
	&\oplus \left( \Sigma^{1,0} A_0\right)^{\frac{\beta(X)+2-(F(X)+2C(X))}{2}} \oplus \Sigma^{2,1}\M_2 
\end{align*}
Since $X=Y+[S^{1,0}-AT]$, we have the following relations
	\[
	F(X)=F(Y),~~C(X)=C(Y)+1,~~\beta(X)=\beta(Y)+2
	\]
that show
\begin{equation}\label{eq:xx}
	H^{*,*}(X) \cong H^{*,*}(Y) \oplus \Sigma^{1,1}\M_2 \oplus \Sigma^{1,0}\M_2.
\end{equation}
Recall the classes $[X]$ and $[x]$ generate free summands in topological degrees zero and two, respectively, while the classes $[Y_i],[F_j]_q$ generate any summands appearing in topological dimension one coming from $H^{*,*}(Y)$ by the discussion above. Thus it suffices to find two new fundamental classes in $H^{1,*}(X)$.  

There is an obvious choice for one of the submanifolds, namely the fixed circle contained in the attached handle. Let $C_1$ be this circle and note $[C_1]\in H^{1,1}(X)$. For the other submanifold, let $p\in C_1$ be a fixed point, and choose a point $s$ contained in another fixed circle. It follows from \cite[Corollary A.2]{D2} that we can construct a path $\gamma$ from $p$ to $s$ such that
	\[
	C_\gamma :=\im(\gamma) \cup \im(\sigma \gamma) \cong S^{1,1}
	\]
and such that $C_\gamma$ and $C_1$ intersect at the single point $p$. Let $U$ be a tubular neighborhood of $C_\gamma$. Over each fixed point the normal fiber is a fixed interval, so $[C_\gamma]\in H^{1,0}(X)$. The two classes are in the correct bidegrees; we next show they are linearly independent from the classes coming from $Y$. 

We begin with $[C_\gamma]$. By construction $C_1$ and $C_\gamma$ intersect at a single fixed point, so
	\[
	[C_1][C_\gamma] = [p]\neq 0.
	\]
On the other hand, $C_1$ does not intersect any of the other submanifolds $Y_i,F_j$, and so for any $\M_2$-linear combination of these fundamental classes
	\[
	[C_1]\cdot \left(\Sigma_i a_i[Y_i] + \Sigma_j b_j [F_j]_q\right) = 0.
	\] 
Hence, it must be that $[C_\gamma]$ is not in the $\M_2$-span of these classes. By the isomorphism in \ref{eq:yy}, the fact that $H^{*,*}(Y)$ is generated by fundamental classes, and consideration of degrees, it must be that
\begin{align*}
	\dim\left(H^{1,0}(Y)\right)=&\dim\left(\M_2\{ [Y_{i}],[F_j]_q, [Y], [y] \} \cap H^{1,0}(Y)\right) \\
	=&\dim\left(\M_2\{ [Y_{i}],[F_j]_q  \} \cap H^{1,0}(Y)\right).
\end{align*}
We have already remarked that
\begin{align*}
	\dim\left(\M_2\cdot\{[Y_{i}],[F_j]_q\} \cap H^{1,0}(Y)\right) = \dim\left(\M_2\cdot\{[Y_{i}],[F_j]_q\} \cap H^{1,0}(X)\right).
\end{align*}
By the isomorphism in \ref{eq:xx}, 
\begin{align*}
	\dim\left(H^{1,0}(Y)\right) = \dim\left(H^{1,0}(X)\right)-1.
\end{align*}
We just proved $[C_\gamma]$ is not in the $\M_2$-span of the classes $[Y_i]$, $[F_j]_q$, so by dimensions it must now follow that
\begin{align*}
	\M_2\cdot\{[C_\gamma],[Y_{i}],[F_j]_q\} \cap H^{1,0}(X) = H^{1,0}(X).
\end{align*}
Thus any generator of the new summand $\Sigma^{1,0}\M_2$ is a linear combination of fundamental classes.

Returning to $C_1$, we can apply Lemma \ref{includept} to see $\iota_p^{*}([C_1])$ is nonzero. Since $\{p\}$ does not intersect any of the submanifolds $Y_i,F_j$, $\iota_p^*([Y_i])=\iota_p^*([F_j]_q)=0$. For degree reasons, $\iota_p^*([C_\gamma])$ is also zero, and thus any $\M_2$-combination of these classes must be in the kernel of $\iota_p$. We conclude $[C_1]$ is not in the $\M_2$-span of any of these classes. Again using our isomorphisms and degree arguments we can say
\begin{align*}
	&\dim\left(\M_2\{ [C_1],[C_\gamma], [Y_{i}],[F_j]_q,[X]\} \cap H^{1,1}(X)\right) = \dim\left(H^{1,1}(X)\right).
\end{align*}
We conclude any generator of the new summand $\Sigma^{1,1}\M_2$ is in this span. Thus
	\[
	H^{*,*}(X)  = \M_2\{[C_1], [C_\gamma], [Y_{i}], [F_j]_q, [X], [x] \} 
	\]
as desired. This completes the proof in the case that $Y$ contains a fixed oval.\medskip

Next suppose the fixed set of $Y$ contains only isolated fixed points. Then by Theorem \ref{nonfreeanswer}
\begin{align*}
	H^{*,*}(Y) \cong &~\M_2  \oplus \left( \Sigma^{1,1}\M_2\right)^{\oplus F(Y)-2} \oplus \left( \Sigma^{1,0} A_0\right)^{\frac{\beta(Y)+2-F(Y)}{2}} \oplus \Sigma^{2,2}\M_2 
\end{align*}
while
\begin{align*}
	H^{*,*}(X) \cong &~\M_2 \oplus \left( \Sigma^{1,1}\M_2\right)^{\oplus F(X)+C(X)-1} \\
	&\oplus \left( \Sigma^{1,0} A_0\right)^{\frac{\beta(X)+2-(F(X)+2C(X))}{2}} \oplus \Sigma^{2,1}\M_2 
\end{align*}
We have the relations
	\[
	F(X)=F(Y),~~C(X)=1, C(Y)=0,~~\beta(X)=\beta(Y)+2.
	\]
Hence the number of $\Sigma^{1,0}\M_2$ and $\Sigma^{1,0}A_0$ summands match, and we just need to find two new classes that generate the two additional $\Sigma^{1,1}\M_2$-summands.

As in the previous case, we need to find two new fundamental classes. Again let $C_1$ be the attached (and now only) fixed oval, noting $[C_1]\in H^{1,1}(X)$. Let $\gamma$ be a path from a fixed point $p\in C_1$ to an isolated fixed point $s$, choosing the path $\gamma$ such that 
	\[
	C_\gamma :=\im(\gamma) \cup \im(\sigma \gamma) \cong S^{1,1}.
	\] 
and such that $C_\gamma$ and $C_1$ only intersect at a single point. Let $U$ be a tubular neighborhood of the circle $C_\gamma$, and note that the fiber over $p$ is isomorphic to $\R^{1,0}$, while the fiber over $s$ is isomorphic to $\R^{1,1}$, so it must be that $[C_\gamma]\in H^{1,1}(X)$.

We can do the same tricks as before to conclude $[C_1]$ and $[C_\gamma]$ generate the rest of the cohomology of $X$. Namely, their nontrivial product shows $[C_\gamma]$ is not an $\M_2$-combination of our current classes. To see $[C_1]$ is not in the span of the classes given by $Y_i,F_j,C_\gamma$, choose another point on the fixed oval $p'\neq p$ and use the map $\iota_{p'}$. This will complete the proof in this case.\medskip

Lastly, suppose $Y$ is a free $C_2$-surface. Then by the computations in \cite{Ha}, ignoring the action of $\rho$, 
\begin{align*}
	H^{*,*}(Y) \cong &~A_0  \oplus \left( \Sigma^{1,0} A_0\right)^{\frac{\beta(Y)+2}{2}} \oplus \Sigma^{2,1}A_0
\end{align*}
while by Theorem \ref{nonfreeanswer}
\begin{align*}
	H^{*,*}(X) \cong &~\M_2 \oplus\left( \Sigma^{1,0} A_0\right)^{\frac{\beta(X)+2-(F(X)+2C(X))}{2}} \oplus \Sigma^{2,1}\M_2 
\end{align*}
We now have the relations
	\[
	F(X)=F(Y)=0,~~C(X)=1, C(Y)=0~~\beta(X)=\beta(Y)+2.
	\]
The number of summands generated in topological dimension one is the same in $H^{*,*}(X)$ as in $H^{*,*}(Y)$. Thus, by adding $[X]$ and $[x]$ to our list of classes $[Y_i],[F_j]_q$, we will have found a collection of fundamental classes that generate the cohomology of $X$.
\end{proof}

\begin{lemma}\label{s11class} Let $Y$ be a nontrivial $C_2$-surface such that $H^{*,*}(Y)$ is generated by fundamental classes. If $X=Y+[S^{1,1}-AT]$, then $H^{*,*}(X)$ is also generated by fundamental classes. 
\end{lemma}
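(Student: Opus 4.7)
The plan is to adapt the framework of Lemma \ref{s10class} from $S^{1,0}$-surgery to $S^{1,1}$-surgery. Writing $Y' := Y \setminus (D\sqcup \sigma D)$, both $Y$ and $X = Y + [S^{1,1}-AT]$ are realized as pushouts along $\partial(D\sqcup \sigma D) \cong S^{1,1}\times S^0 \cong C_2\times S^1$, glued to $C_2\times D^2$ and to the antitube $H := S^{1,1}\times D(\R^{1,1})$ respectively. Following the template of Lemma \ref{s10class} (with $Y/Y' \simeq C_{2+}\wedge S^{2,0}$ giving $H^{1,q}(Y,Y')=0$), the map $H^{1,q}(Y)\hookrightarrow H^{1,q}(Y')$ remains injective, so that fundamental classes of submanifolds $Y_i, F_j \subset Y'$ carry independently from $Y$ into $X$ via their common image in $H^{*,*}(Y')$, up to the kernel of $H^{*,*}(X)\to H^{*,*}(Y')$ controlled by $X/Y'\simeq S^{1,1}_+\wedge S^{1,1}$.

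Using Theorem \ref{nonfreeanswer} with the invariants $F(X) = F(Y)+2$ (the new isolated fixed points being $p_1 = (N,0)$ and $p_2 = (S,0)$ in the interior of $H$), $C(X) = C(Y)$, and $\beta(X) = \beta(Y)+2$, when $Y$ is nonfree the decomposition of $H^{*,*}(X)$ differs from $H^{*,*}(Y)$ by exactly two extra $\Sigma^{1,1}\M_2$-summands; when $Y$ is free, $H^{*,*}(X) \cong \M_2 \oplus (\Sigma^{1,0}A_0)^{\beta(Y)/2 + 1} \oplus \Sigma^{2,2}\M_2$ while $H^{*,*}(Y)$ has the analogous shape with $A_0$-summands in the $(0,0)$ and top positions. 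In the nonfree cases the two new generators will be the core circle $C_0 := S^{1,1}\times\{0\}\subset H$, which has constant normal fiber $\R^{1,1}$ so $[C_0] \in H^{1,1}(X)$, and an auxiliary circle $C_1 := \gamma \cup \sigma\gamma$, where $\gamma$ is an embedded equivariant path from $p_1$ to $p_2$ that exits $H$ through one boundary component, crosses through $Y'$ avoiding every $Y_i$ and $F_j$, and re-enters $H$ through the conjugate boundary component; general position gives $\gamma \cap \sigma\gamma = \{p_1,p_2\}$ and hence $C_1 \cong S^{1,1}$ with both normal fibers $\R^{1,1}$, so $[C_1] \in H^{1,1}(X)$. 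In the free case the new generators are instead $1 = [X] \in H^{0,0}(X)$, $[p_1] \in H^{2,2}(X)$, and the free classes from $Y$ carried over for the $\Sigma^{1,0}A_0$-summands.

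The main obstacle will be establishing the independence of $[C_0]$ and $[C_1]$ in $H^{1,1}(X)$. Because both circles meet $X^{C_2}$ in the same pair $\{p_1,p_2\}$, Lemma \ref{includept} gives identical fixed-point restrictions, so a new discriminator is required. The two facts that drive the argument are that $C_0 \subset H$ is disjoint from $Y'$ while $C_1$ meets $Y'$ in a pair of transverse arcs (so $[C_0]$ and $[C_1]$ behave differently under restriction to $H^{*,*}(Y')$), and the transverse intersection product $[C_0]\smile[C_1] = [p_1] + [p_2]$ provided by Theorem \ref{prod}, which is nonzero since $[p_1]$ and $[p_2]$ restrict to distinct components of $X^{C_2}$ by Corollary \ref{funfix}. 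Together with the vanishing products $[C_0]\smile[Y_i] = [C_0]\smile[F_j] = 0$ from the chosen disjointness, these pieces should pin $[C_0]$ and $[C_1]$ down as $\M_2$-generators of the two new $\Sigma^{1,1}\M_2$-summands through a dimension count modeled on the final paragraphs of Lemma \ref{s10class}. Case 3 (the free case) is handled by a more direct comparison via $H^{*,*}(Y')$, noting that the new fixed points force $1\in H^{0,0}(X)$ to generate a free $\M_2$-summand rather than an $A_0$-summand, and that $[p_1]$ generates the new $\Sigma^{2,2}\M_2$-summand.
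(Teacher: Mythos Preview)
Your overall framework—carrying the degree-one classes from $Y$ through $H^{*,*}(Y')$, identifying the correct number of new $\Sigma^{1,1}\M_2$-summands via the invariant count $F(X)=F(Y)+2$, $C(X)=C(Y)$, and handling the free case by observing no new degree-one generators are needed—matches the paper's outline. The genuine divergence is in your choice of the second new circle. The paper builds $C_\gamma$ from a path joining one handle fixed point $p_1$ to a fixed point already present in $Y$ (isolated or on a fixed oval), exactly parallel to Lemma~\ref{s10class}. You instead connect $p_1$ to the \emph{other} handle fixed point $p_2$ through $Y'$. The paper's choice is what makes the independence step go through cleanly: since the core circle passes through $\{p_1,p_2\}$ while $C_\gamma$ passes through $\{p_1,s\}$ with $s\in Y^{C_2}$, restriction to the point $p_2$ (via Lemma~\ref{includept}) separates them, just as in the $S^{1,0}$ case.

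Your route creates a real obstruction at exactly the spot you flag. With $C_0$ and $C_1$ sharing the fixed set $\{p_1,p_2\}$, the intersection-product discriminator fails as well: a generic equivariant section of the trivial $\R^{1,1}$-normal bundle of $C_0$ vanishes precisely at $p_1,p_2$, so $[C_0]^2=[p_1]+[p_2]=[C_0]\smile[C_1]$ and hence $[C_0]\smile([C_0]+[C_1])=0$. Thus cupping with $[C_0]$ cannot distinguish $[C_1]$ from $[C_0]$ (nor from $\rho\cdot 1$, since one checks $\rho[C_0]=[p_1]+[p_2]$ by restricting to $X^{C_2}$). Your remaining tool, ``behave differently under restriction to $H^{*,*}(Y')$'', is not developed: you would need to identify $[C_1]|_{Y'}$ explicitly and show it is independent of the images of the old classes there, which is not obviously easier. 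The simplest repair is to adopt the paper's $C_\gamma$ when $Y$ is nonfree; alternatively you could invoke the forgetful map, since $\psi([C_0])$ and $\psi([C_1])$ are a meridian and a longitude of the attached handle and hence independent in $H^1_{sing}(X)$, but this line is absent from your sketch.
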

\begin{proof} 
The proof is similar to that of the previous lemma, so we just provide an outline. Instead of having a fixed circle in the attached handle, we have a circle $C_1\cong S^{1,1}$. The fundamental class for $C_1$ is in bidegree $(1,1)$. In the case where $Y$ has an isolated fixed point, the other class will be given by a circle $C_\gamma$ where $\gamma$ is a path from a fixed point on $C_1$ to this other isolated fixed point; this will give another class in bidegree $(1,1)$. In the case where $Y$ only has fixed ovals, the other class will given by a circle $C_\gamma$ where $\gamma$ is a path from a fixed point on $C_1$ to a point on a fixed oval. Lastly, in the case where $Y$ is free, no other classes besides $[X]$ and $[x]$ will be needed.
\end{proof}

We are now ready to prove the main theorem of this section.

\begin{theorem} Let $X$ be a $C_2$-surface. Then $H^{*,*}(X;\underline{\Z/2})$ is generated by fundamental classes of equivariant submanifolds.
\end{theorem}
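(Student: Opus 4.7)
The plan is to assemble the theorem from the preceding lemmas by induction on the $\beta$-genus, after first disposing of the two cases the author has not treated in advance: trivial $C_2$-surfaces and surfaces obtained by $FM$-surgery. If $X$ carries the trivial $C_2$-action, then by Lemma \ref{trivial} we have $H^{*,*}(X;\underline{\Z/2})\cong \M_2\otimes_{\Z/2} H^{*}_{sing}(X;\Z/2)$, and since the singular $\Z/2$-cohomology of any closed surface is generated by fundamental classes of a point, a collection of circles spanning $H^1$, and the surface itself, all of which are equivariant submanifolds under the trivial action, this case is immediate.

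For nontrivial $X$, I would induct on $\beta(X)$. The base cases are already covered: free surfaces by Lemma \ref{freefun}, equivariant spheres by Lemma \ref{spherefun}, and doubling spaces by Lemma \ref{doubfun}. For any other nontrivial nonfree $X$, Theorem \ref{nonfreeclass} produces a $C_2$-surface $Y$ of strictly smaller $\beta$-genus together with a surgery of type $S^{1,0}$, $S^{1,1}$, or $FM$ carrying $Y$ to $X$. The inductive hypothesis guarantees that $H^{*,*}(Y)$ is generated by fundamental classes, and Lemmas \ref{s10class} and \ref{s11class} handle the first two surgeries. Only an analogous $FM$-surgery lemma remains.

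To establish that lemma, I would follow the template of Lemma \ref{s10class}. Writing $X = Y+[FM]$, the surgery removes a small $\R^{2,2}$-disk around an isolated fixed point $p\in Y$ and sews in the equivariant M\"obius band, whose core circle $C_1$ becomes a new fixed oval of $X$; the invariants change as $F(X)=F(Y)-1$, $C(X)=C(Y)+1$, $\beta(X)=\beta(Y)+1$, and a direct comparison of the two instances of Theorem \ref{nonfreeanswer} pins down which summands of $H^{*,*}(X)$ must be detected by new fundamental classes. The central new class is $[C_1]\in H^{1,1}(X)$: even though the normal bundle of $C_1$ is the nontrivial M\"obius bundle, every fiber over $C_1$ is isomorphic to $\R^{1,1}$, so Theorem \ref{thomnonfree} places $[C_1]$ in bidegree $(1,1)$. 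If a further one-dimensional class is needed, I would construct an auxiliary circle $C_\gamma = \im(\gamma)\cup\im(\sigma\gamma)\cong S^{1,1}$ from a suitable $C_2$-equivariant path joining a fixed point of $C_1$ to a second fixed point of $X$, as in \cite[Corollary A.2]{D2}, yielding a class in bidegree $(1,1)$ or $(1,0)$ according to whether that target fixed point is isolated or lies on a fixed circle.

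The main obstacle is verifying that the combined list of inherited classes together with $[C_1]$ and $[C_\gamma]$ actually spans $H^{*,*}(X)$. As in Lemma \ref{s10class}, I would pass through $Y' = Y\setminus (D\cup\sigma D)$ and exploit excision to see that the inherited fundamental classes restrict injectively into $H^{*,*}(X)$, and hence remain linearly independent there. To separate $[C_1]$ and $[C_\gamma]$ from the inherited classes, I would use the transverse intersection product $[C_1]\smile[C_\gamma]=[p']$ at a fixed point $p'$, which vanishes when paired against any inherited class supported off the attached handle, together with the restriction to a point of $C_1$ via Lemma \ref{includept} to rule out $\M_2$-dependence. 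A dimension count in each bidegree, driven by Theorem \ref{nonfreeanswer}, then forces equality and closes the induction.
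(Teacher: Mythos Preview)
Your proposal is correct and follows the same inductive architecture as the paper: dispose of trivial actions via Lemma~\ref{trivial}, induct on the $\beta$-genus with free surfaces, spheres and doubling spaces as base cases, and use Theorem~\ref{nonfreeclass} together with the surgery Lemmas~\ref{s10class} and \ref{s11class} for the inductive step. The only missing ingredient, the $FM$-surgery case, is where your proposal and the paper diverge.

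For $X=Y+[FM]$ you propose to rerun the excision-through-$Y'$ argument of Lemma~\ref{s10class}. Be careful here: the $FM$ construction removes a \emph{single} $D(\R^{2,2})$-disk about an isolated fixed point, not a pair of conjugate free disks, so your notation $Y'=Y\setminus(D\cup\sigma D)$ is misleading and the Mayer--Vietoris bookkeeping changes accordingly (the boundary circle is $S^1_a$ rather than $C_2\times S^1$). The paper sidesteps this by instead exploiting the collapse map $\pi\colon X\to X/M\cong Y$, where $M$ is a closed neighbourhood of the sewn-in M\"obius band: the cofiber sequence $X\to Y\to\Sigma^{1,0}M$ with $M\simeq S^{1,0}$ makes injectivity of $\pi^*$ on $H^{1,q}$ for $q\ge-2$ immediate, and naturality (Theorem~\ref{funnat}) then transports the generating submanifolds of $Y$ back to $X$ as $\pi^{-1}(Y_i)$, $\pi^{-1}(F_j)$. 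Both routes lead to the same case split on whether $Y^{C_2}$ already contains a fixed circle and to the same new classes $[C_1]$ and $[C_\gamma]$ that you describe, so your endgame --- intersection product and Lemma~\ref{includept} to separate the new classes, followed by a dimension count against Theorem~\ref{nonfreeanswer} --- is exactly right. The paper's collapse-map device is a bit cleaner because it produces an honest map $X\to Y$ and lets Theorem~\ref{funnat} do the work; your $Y'$ approach would go through as well once the correct disk is removed.
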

\begin{proof} If $X$ is trivial, then by Lemma \ref{trivial}
	\[
	H^{*,*}(X)\cong \M_2\otimes_{\Z/2} H^{*}_{sing}(X).
	\]
Since $H^{*}_{sing}(X)$ is generated by fundamental classes, it immediately follows that $H^{*,*}(X)$ is generated by fundamental classes.

Assume $X$ is nontrivial. We proceed by induction on the $\beta$-genus of $X$. If the $\beta$-genus is zero, then we are done by Lemma \ref{spherefun}. For the inductive hypothesis, let $k\geq 1$ and assume the statement holds for all surfaces of $\beta$-genus less than $k$. 

Let $X$ be a surface of $\beta$-genus $k$. If $X$ is a free $C_2$-surface or a doubling space, then we are done by Lemmas \ref{freefun} and \ref{doubfun}. Thus suppose $X$ is nonfree and not a doubling space. By Theorem \ref{nonfreeclass}, there are three cases for $X$: the surface $X$ is isomorphic to a space given by doing $S^{1,1}-$, $S^{1,0}-$, or $FM-$ surgery to a surface of lower $\beta$-genus.

Combining the first and second cases, suppose $X\cong Y+[S^{1,\epsilon}-AT]$ where $\epsilon\in \{0,1\}$. Note $\beta(Y)=\beta(X)-2$, so we can apply the inductive hypothesis to conclude $H^{*,*}(Y)$ is generated by fundamental classes. We are then done after applying either Lemma \ref{s10class} or Lemma \ref{s11class}. 

The remaining case is $X\cong Y+[FM]$. Let $M$ denote a closed neighborhood of the attached M\"obius band and observe $X/M\cong Y$. Thus in this case we have a map $\pi:X\to Y$ that appears in the cofiber sequence
	\[
	X\to Y \to \Sigma^{1,0} M.
	\]
Note $M\simeq S^{1,0}$ and so it follows from the associated long exact sequence that the map $\pi^{*}:H^{1,q}(Y) \to H^{1,q}(X)$ is injective if $q\geq -2$. Since $\beta(Y)<\beta(X)$, the inductive hypothesis implies $H^{*,*}(Y)$ is generated by fundamental classes. Let $Y_1,\dots, Y_n$ be the nonfree and $F_1,\dots, F_n$ be the free the one-dimensional submanifolds of $Y$ whose fundamental classes along with $[Y]$ and $[y]$ generate $H^{*,*}(Y)$. We can choose these submanifolds so that $\pi^{-1}(Y_i)$ and $\pi^{-1}(F_j)$ are submanifolds of $X$. By Theorem \ref{funnat}, $\pi^*[Y_i]=[\pi^{-1}(Y_i)]$ and similarly $\pi^*[F_j]_q=[\pi^{-1}(F_j)]_q$. The injectivity of $\pi^*$ implies these classes inherit no new relations in $H^{*,*}(X)$. Similar to the proof of Lemma \ref{s11class}, our goal is to find new submanifolds to generate the remainder of $H^{*,*}(X)$. There are two subcases based on the fixed set of $Y$.

First suppose $Y^{C_2}$ contains a fixed circle. Then by Theorem \ref{nonfreeanswer},
	\[
	H^{*,*}(X) \cong H^{*,*}(Y) \oplus \Sigma^{1,0}\M_2,
	\]
so we just need to find a single one-dimensional submanifold. Let $C$ denote the circle in the attached M\"obius band. Then $[C]$ gives a class in bidegree $(1,1)$. As in Lemma \ref{s11class}, construct a circle $C_\gamma$ from a path that travels from a point on $C$ to a point on another fixed circle. Then $[C_\gamma]\in H^{1,0}(X)$. From the intersection product, $[C]\cdot [C_\gamma]\neq 0$ while $[C]\cdot [Y_i]=0$ and $[C]\cdot [F_j]_q=0$ for all $i$, $j$, and $q$. Thus $[C_\gamma]$ must be linearly independent from the classes already in $H^{1,*}(X)$. We conclude the classes $[\pi^{-1}(Y_i)]$, $[\pi^{-1}(F_j)]$, $[C_\gamma]$, $[X]$, and $[p]$ where $p$ is any point on a fixed circle will generate $H^{*,*}(X)$. 

Next suppose $Y$ does not have any fixed circles. The fixed set of $Y$ must contain at least one fixed point in order to do $FM-$surgery. By running the long exact sequence associated to the cofiber sequence $M\hookrightarrow X \to Y$ (or by Theorem \ref{nonfreeanswer}), we see $H^{*,*}(X)$ has the same number and type of summands generated in topological degree one as $H^{*,*}(Y)$ except it has an additional summand of the form $\Sigma^{1,1}\M_2$. Let $p$ be a fixed point on $C$. In this case, the classes $[\pi^{-1}(Y_i)]$, $[\pi^{-1}(F_j)]_q$, $[C]$, $[X]$, and $[p]$ will generate $H^{*,*}(X)$. 

We have exhausted all cases, and we conclude $H^{*,*}(X)$ is generated by fundamental classes for all $C_2$-surfaces. 
\end{proof}
\appendix
\section{An algebraic property of $\M_2$-modules}\label{ch:algproof}
In this appendix, we state and prove a theorem about maps between nice $\M_2$-modules that are isomorphisms in a certain range. This theorem will imply property (vi) of Theorem \ref{thomnonfree}. 

\subsection*{Notation and terminology}
We say an $\M_2$-module is ``nice" if it is a direct sum of finitely many copies of shifted free modules and shifted copies of $A_r=\tau^{-1}\M_2/(\rho^{r+1})$ for various values of $r$, and furthermore, if all shifts are given by actual representations, i.e. the shifts are given by $(p,q)$ where $p\geq q \geq 0$. We will refer to the $A_r$-summands as ``antipodal summands". Given an antipodal summand of the form $\Sigma^{s,0}A_r$, we can associate the tuple $(s;r)$; note an antipodal summand with tuple $(s;r)$ begins in topological dimension $s$ and ends in topological dimension $(s+r)$. Given an $\M_2$-module $V$ and an element $v\in V$ we will write $wt(v)$ for the weight of $v$. When considering a single bidegree, we will write $V^{f,g}$ for the elements of $V$ in bidegree $(f,g)$.

In the proof, we will consider certain quotients, submodules, and localizations of nice $\M_2$-modules in order to detect the properties of free versus antipodal summands. For an $\M_2$-module $M$ let $T(M)=\{m\in M\colon \rho^im=0 \text{ for some } i\}$. If $M$ is a nice module, note $T(M)$ consists of the antipodal summands and the bottom cones of free summands. We can then consider the quotient $M/T(M)$ which is isomorphic to a direct sum of top cones, one for each free summand in $M$. We will denote such quotients by $\tilde{M}$. If we further quotient to form $\tilde{M}/\im(\rho)$, we obtain a module isomorphic to a direct sum of shifts of the module $\Z/2[\tau]$, one for each free summand. We can also consider the localization $\tau^{-1}T(M)$ which is isomorphic to the antipodal summands of $M$.

We begin by proving a lemma about $\Z/2[t]$-modules which will be useful when considering the quotient $\tilde{M}/\im(\rho)$.

\begin{lemma}\label{poly}
Consider the graded polynomial ring $R=\Z/2[t]$ where $|t|=1$. Let $M$ be a finitely generated, free $R$-module with $R$-basis $\{\alpha_1,\dots, \alpha_m\}$. Suppose $N$ is another finitely generated, free $R$-module, and $\phi:M\to N$ is a degree $q$ map such that $\phi:M^g\to N^{g+q}$ is an isomorphism whenever $g\geq g_0$ for some integer $g_0$. Then there exists an $R$-basis $\{\beta_1,\dots,\beta_m\}$ for $N$ such that $|\alpha_i|+q\geq |\beta_i|$.  
\end{lemma}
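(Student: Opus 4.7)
My plan is to sort the degrees of the two bases and then derive a rank contradiction from the injectivity of $\phi$. First I would observe that $\phi$ must be injective: any homogeneous element of $M[t^{-1}]$ or $N[t^{-1}]$ equals $t^{-k}x$ for some $x$ in degree $\geq g_0$ once $k$ is taken large enough, so the hypothesis on $\phi$ gives an isomorphism $\phi[t^{-1}]:M[t^{-1}]\to N[t^{-1}]$. Since $M$ is a (free, hence) torsion-free $R$-module it embeds in its localization, so $\phi$ must already be injective on $M$.

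Next I would choose any homogeneous $R$-basis $\{\gamma_1,\dots,\gamma_m\}$ of $N$ and reindex both bases so that they are sorted in nondecreasing degree: $|\alpha_1|\leq\cdots\leq|\alpha_m|$ and $|\gamma_1|\leq\cdots\leq|\gamma_m|$. The core claim will be that $|\gamma_i|\leq|\alpha_i|+q$ for each $i$ in this sorted pairing; granted this, I can undo the permutation on the $\alpha_i$'s to produce a basis $\{\beta_i\}$ satisfying $|\beta_i|\leq|\alpha_i|+q$ in the original ordering.

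To prove the sorted claim, suppose for contradiction that $|\gamma_k|>|\alpha_k|+q$ for some $k$. Expand $\phi(\alpha_i)=\sum_j p_{ij}(t)\gamma_j$; each $p_{ij}$ is homogeneous of degree $|\alpha_i|+q-|\gamma_j|$ and hence must vanish whenever $|\alpha_i|+q<|\gamma_j|$. For $i\leq k$ and $j\geq k$ the sorted ordering gives $|\alpha_i|+q\leq |\alpha_k|+q<|\gamma_k|\leq|\gamma_j|$, so $p_{ij}=0$. Thus $\phi$ carries the rank-$k$ free submodule $R\alpha_1\oplus\cdots\oplus R\alpha_k$ into the rank-$(k-1)$ free submodule $R\gamma_1\oplus\cdots\oplus R\gamma_{k-1}$, which is impossible for an injective map since $R=\Z/2[t]$ is a PID and no rank-$k$ free module embeds in a rank-$(k-1)$ free module over it. The main thing to be careful about is the initial injectivity statement for $\phi$ (extracted via the localization $\phi[t^{-1}]$); the rest is an elementary bookkeeping argument about how a degree-$q$ map must interact with sorted generating degrees.
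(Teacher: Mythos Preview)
Your proof is correct and takes a genuinely different route from the paper's. The paper first counts $\Z/2$-dimensions in a single high degree to show that $N$ has rank $m$, and then proceeds by induction on $m$: it expands $\phi(\alpha_1)$ in a chosen basis $\{b_1,\dots,b_m\}$ of $N$, singles out the term $b_k$ of maximal degree appearing with nonzero coefficient, performs a change of basis so that $\phi(\alpha_1)$ becomes a $t$-power times a new basis vector $\beta_1$, and then passes to the quotient $N/\langle\beta_1\rangle$ to invoke the inductive hypothesis on $\alpha_2,\dots,\alpha_m$. Your approach instead extracts global injectivity of $\phi$ from the localization $\phi[t^{-1}]$, sorts both bases by degree, and derives a rank contradiction from the forced vanishing of an off-diagonal block of the coefficient matrix. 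The paper's argument is constructive and builds the $\beta_i$ one at a time via basis changes and quotients; yours is non-constructive but shorter, resting only on the fact that over the PID $R=\Z/2[t]$ a free module of rank $k$ cannot embed in one of rank $k-1$. One small point worth making explicit in your write-up: the isomorphism $\phi[t^{-1}]\colon M[t^{-1}]\to N[t^{-1}]$ you establish also forces $\operatorname{rank} N = m$, which is what justifies choosing an $m$-element basis $\{\gamma_1,\dots,\gamma_m\}$ for $N$ in the first place; the paper handles this separately at the outset, and you should too.
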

\begin{proof}
There exist integers $j_1,\dots, j_m$ such that $g=|t^{j_i}\alpha_i|$ is larger than $g_0$ and constant for all $i$. The elements $t^{j_i}\alpha_i$ form a linearly independent set in $M^g$ and thus the images $\phi(t^{j_i}\alpha_i)$ form a linearly independent set in $N^{g+q}$. This implies there are at least $m$ free summands in $N$. We can similarly use $\phi^{-1}$ in the range it exists to show there can be at most $m$ free summands in $N$. We conclude $N$ has an $R$-basis consisting of $m$ elements.

We proceed by induction on $m$ to show there is a basis $\{\beta_1,\dots,\beta_m\}$ for $N$ such that $|\alpha_i|+q\geq |\beta_i|$. This is clear if the bases consist of exactly one element since otherwise $\phi$ would be zero. For the inductive hypothesis, suppose we can find such a basis whenever we have a map that is an isomorphism in sufficiently high degrees between finitely generated, free $R$-modules of rank $m-1$. Choose some $R$-basis $\{b_1,\dots, b_m\}$ for $N$ and suppose $\phi(\alpha_1)=\sum_{i}\epsilon_{i}t^{j_i}b_i$ where $\epsilon_i\in \Z/2$. Let $b_k$ be a basis element of maximal degree such that the coefficient $\epsilon_i$ is nonzero. Reorder the set so that this element is now $b_1$. We can factor out powers of $t$ to obtain
	\[
	\phi(\alpha_1)= t^{j_1}\left(b_1+\sum_{i>1} \epsilon_it^{j_i-j_1}b_i\right).
	\] 
Let $\beta_1=b_1+\sum_{i>1} \epsilon_it^{j_i-j_1}b_i$ and note the set $\{\beta_1, b_2,\dots, b_m\}$ is still an $R$-basis for $N$. Furthermore, we obtain a map
	\[
	\langle \alpha_2,\dots, \alpha_m\rangle \overset{\phi}{\longrightarrow}N \twoheadrightarrow N/\langle \beta_1\rangle
	\]
that is still an isomorphism in the desired range. By the inductive hypothesis, there exists an $R$-basis $\beta_2',\dots, \beta_m'$ for the quotient such that $|\beta_i'|\leq |\alpha_i|+q$. Let $\beta_i$ be a lift of $\beta_i'$ to $N$. Then $\beta_1,\dots, \beta_m$ will be a basis for $N$ such that $|\beta_i|\leq |\alpha_i|+q$. 
\end{proof}

\begin{theorem}\label{algproof}
Let $V$ and $W$ be two nice $\M_2$-modules such that $V$ has $c$ free summands generated in bidegrees $(k_i,\ell_i)$ and $d$ antipodal summands with tuples $(s_j;r_j)$. Suppose $\phi:V\to W$ is an $(n,q)$-degree $\M_2$-module map such that $\phi:V^{f,g}\to W^{f+n,g+q}$ is an isomorphism whenever $g\geq f$. Then $W$ has exactly $c$ free summands generated in bidegrees $(k_i+n,\ell_i')$ such that $\ell_i+q_i\geq\ell_i'\geq 0$ and exactly $d$ antipodal summands with tuples $(s_j+n;r_j)$. 
\end{theorem}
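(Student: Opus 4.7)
The plan is to separately detect the antipodal summands and the free summands of $W$ using three functorial reductions of nice $\M_2$-modules: $\tau$-localization, passage to $T(-)$, and the further quotient $\tilde{M}/\im(\rho)$. The antipodal tuples are read off from $\tau^{-1}T(-)$, while the weight data for the free summands is recovered by applying Lemma \ref{poly} to the $\Z/2[\tau]$-module $\bar{V}:=\tilde{V}/\im(\rho)$ one topological dimension at a time.

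First I would show that $\tau^{-1}\phi: \tau^{-1}V \to \tau^{-1}W$ is an isomorphism of $\tau^{-1}\M_2$-modules. In each bidegree $(f,g)$, $(\tau^{-1}V)^{f,g}$ is the colimit of $V^{f, g+k}$ along multiplication by $\tau$, and the hypothesis gives $\phi$ an isomorphism whenever $g + k \geq f$; so the colimit map is an isomorphism. Applying $\tau^{-1}T(-)$ (which for a nice module equals its antipodal summand) yields a bigraded isomorphism of antipodal parts shifted by $(n,q)$, and the multiset of tuples $(s_j; r_j)$ of $V$ must then match the tuples of $W$ shifted to $(s_j + n; r_j)$. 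Passing to the quotient $\tau^{-1}V/\tau^{-1}T(V)$, which as a bigraded $\tau^{-1}\M_2$-module is determined by the multiset of $k_i$'s alone (invertibility of $\tau$ forgets the weight shifts of free summands), shows that $W$ has exactly $c$ free summands at topological degrees $\{k_i + n\}$.

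Next let $U_V = T(V) + \rho V$ and $\bar{V} = V/U_V$, so that $\bar{V} = \tilde{V}/\im(\rho)$. I would show that $\phi$ restricts to an isomorphism $U_V^{f,g} \to U_W^{f+n, g+q}$ for $g \geq f$. Injectivity is inherited from $\phi$. For surjectivity, given $w = \rho w_0 + t$ with $t \in T(W)$, the hypothesis $g \geq f$ puts both bidegrees $(f-1, g-1)$ and $(f+N, g+N)$ in the range where $\phi$ is an isomorphism, so one can write $\rho w_0 = \phi(\rho v_0)$ for some $v_0$, and from $\rho^N t = 0$ the identity $\phi(\rho^N \phi^{-1}(t)) = \rho^N t = 0$ forces $\phi^{-1}(t)$ to lie in $T(V)$. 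Hence $\phi^{-1}(w) = \rho v_0 + \phi^{-1}(t) \in U_V$. The five lemma then gives $\bar{\phi}^{f,g}: \bar{V}^{f,g} \to \bar{W}^{f+n, g+q}$ an isomorphism whenever $g \geq f$.

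Finally, for each fixed $f$, the bigraded module $\bar{V}^{f,*}$ is the free $\Z/2[\tau]$-module on generators at weights $\{\ell_i : k_i = f\}$, and $\bar{W}^{f+n,*}$ is free of the same rank (by the first step) on generators at weights $\{\ell_i' : k_i' = f+n\}$. Since $\bar{\phi}^f$ is an isomorphism for $g \geq f$ by the second step, Lemma \ref{poly} produces a basis of $\bar{W}^{f+n,*}$ with $\ell_i' \leq \ell_i + q$ under a suitable matching. Combining this bound with $\ell_i' \geq 0$ from the niceness of $W$ completes the proof. The main technical challenge is the second step: one has to verify that the lift $\phi^{-1}(t)$ of a torsion element really does land in $T(V)$ and that every bidegree shift made in the argument stays inside the range where $\phi$ is an isomorphism.
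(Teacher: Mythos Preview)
Your proposal is correct and follows essentially the same strategy as the paper: localize at $\tau$ to pin down the antipodal summands, and pass to the quotient $\bar V=V/(T(V)+\rho V)$ (equivalently $\tilde V/\im(\rho)$) so that Lemma~\ref{poly} can be applied one topological dimension at a time to control the weights of the free summands. The paper simply asserts that ``one readily checks'' the restrictions of $\phi$ to $T(V)$ and to $\im(\rho)$ are isomorphisms in the range $g\geq f$, whereas you spell out the torsion argument (lifting $t\in T(W)$ and using injectivity of $\phi$ in bidegree $(f+N,g+N)$ to force $\rho^N\phi^{-1}(t)=0$); your extra use of $\tau^{-1}V/\tau^{-1}T(V)$ to count the free summands is harmless but redundant, since the rank equality already falls out of Lemma~\ref{poly}.
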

\begin{proof}
One readily checks the restrictions $\phi:T(V) \to T(W)$ and $\phi:\im(\rho)\to \im(\rho)$ are isomorphisms in the given range. The map $\phi$ then descends to a map
	\[
	\tilde\phi:(\tilde{V}/\im(\rho))^{f,g} \to (\tilde{W}/\im(\rho))^{f+n,g+q}
	\] 
that is still an isomorphism when $g\geq f$. The quotients $\tilde{V}/\im(\rho)$ and $\tilde{W}/\im(\rho)$ are isomorphic to a direct sum of shifts of the module $\Z/2[\tau]$, one for each free summand in $V$ and $W$, respectively. Explicitly
	\[
	\tilde{V}/\im(\rho)\cong \bigoplus_{i=1}^c \Sigma^{k_i,\ell_i}\Z/2[\tau].
	\]
Fix a topological dimension $k$ and consider $(\tilde{V}/\im(\rho))^{k,*}$ and $(\tilde{W}/\im(\rho))^{k+n,*}$. These are both finitely generated, free $\Z/2[\tau]$-modules and the map
	\[
	\tilde{\phi}:(\tilde{V}/\im(\rho))^{k,g}\to (\tilde{W}/\im(\rho))^{k+n,g+q}
	\]
is an isomorphism whenever $g\geq k$. By Lemma \ref{poly}, there are respective bases $\{\alpha_1,\dots, \alpha_m\}$ and $\{\beta_1,\dots, \beta_m\}$ for $(\tilde{V}/\im(\rho))^{k,*}$ and $(\tilde{W}/\im(\rho))^{k+n,*}$ such that $wt(\beta_i)\leq wt(\alpha_i)+q$. Lifts of the elements $\alpha_i$ and $\beta_i$ to $V$ and $W$ will generate the free summands in bidegrees $(k,wt(\alpha_i))$ and $(k+n,wt(\beta_i))$ in $V$ and $W$, respectively. This proves the statement about the free summands.

We next consider the antipodal summands. Consider the localization $\tau^{-1}T(V)$. Let $F=\Z/2[\tau,\tau^{-1}]$ and note $\tau^{-1}T(V)$ is a finitely generated $F[\rho]$-module. By the decomposition of $V$ into summands as an $\M_2$-module, we can say explicitly 
	\[
	\tau^{-1}T(V)\cong \bigoplus_{j=1}^d \Sigma^{s_j,0}F[\rho]/(\rho^{r_j+1}).
	\]
Similarly $\tau^{-1}T(W)$ is isomorphic to a direct sum of shifts of $F[\rho]/(\rho^{r+1})$, one for each antipodal summand of $W$. 

Note $\phi$ restricts to a map $\phi:T(V)\to T(W)$ which then localizes to give a map $\tau^{-1}\phi\colon \tau^{-1}T(V)\to \tau^{-1}T(W)$ of finitely generated $F[\rho]$-modules. Since $\phi$ was an isomorphism in bidegrees $(f,g)$ such that $g\geq f$, the map $\tau^{-1}\phi$ is guaranteed to be an isomorphism in these bidegrees. Though, now that $\tau$ is invertible, we see that $\tau^{-1}\phi$ is an isomorphism in all bidegrees. We conclude $\tau^{-1}T(V)\cong \tau^{-1}T(W)$ and $W$ must have exactly $d$ antipodal summands with tuples $(s_j+n;r_j)$.  
\end{proof}

\bibliographystyle{amsalpha}

\end{document}